%2multibyte Version: 5.50.0.2960 CodePage: 65001

\documentclass{amsart}
%%%%%%%%%%%%%%%%%%%%%%%%%%%%%%%%%%%%%%%%%%%%%%%%%%%%%%%%%%%%%%%%%%%%%%%%%%%%%%%%%%%%%%%%%%%%%%%%%%%%%%%%%%%%%%%%%%%%%%%%%%%%%%%%%%%%%%%%%%%%%%%%%%%%%%%%%%%%%%%%%%%%%%%%%%%%%%%%%%%%%%%%%%%%%%%%%%%%%%%%%%%%%%%%%%%%%%%%%%%%%%%%%%%%%%%%%%%%%%%%%%%%%%%%%%%%
\usepackage{amssymb}
\usepackage{amsfonts}
\usepackage{geometry}
\usepackage{color}

\setcounter{MaxMatrixCols}{10}
%TCIDATA{OutputFilter=LATEX.DLL}
%TCIDATA{Version=5.50.0.2960}
%TCIDATA{Codepage=65001}
%TCIDATA{<META NAME="SaveForMode" CONTENT="1">}
%TCIDATA{BibliographyScheme=Manual}
%TCIDATA{Created=Saturday, January 11, 2020 18:53:38}
%TCIDATA{LastRevised=Friday, September 03, 2021 08:24:00}
%TCIDATA{<META NAME="GraphicsSave" CONTENT="32">}
%TCIDATA{<META NAME="DocumentShell" CONTENT="Articles\SW\AMS Journal Article">}
%TCIDATA{CSTFile=amsartci.cst}

\newtheorem{theorem}{Theorem}
\theoremstyle{plain}

\newtheorem{conclusion}[theorem]{Conclusion}

\newtheorem{corollary}[theorem]{Corollary}

\newtheorem{definition}[theorem]{Definition}

\newtheorem{lemma}[theorem]{Lemma}

\newtheorem{remark}[theorem]{Remark}

\numberwithin{equation}{section}
\input{tcilatex}
\geometry{left=1in,right=1in,top=1in,bottom=1in}

\begin{document}
\title[Sums of squares II]{Sums of squares II: matrix functions}
\author{Lyudmila Korobenko}
\address{Reed College, Portland, Oregon, USA, korobenko@reed.edu}
\author{Eric Sawyer}
\address{McMaster University, Hamilton, Ontario, Canada, sawyer@mcmaster.ca}
\thanks{The second author is partially supported by NSERC grant number 12409
and the McKay Research Chair grant at McMaster University}

\begin{abstract}
This paper is the second in a series of three papers devoted to sums of
squares and hypoellipticity of infinitely degenerate operators. In the first
paper we established \ a sharp $\omega $-monotonicity criterion for writing
a smooth nonnegative function $f$ that is flat at, and positive away from,
the origin, as a finite sum of squares of $C^{2,\delta }$ functions for some 
$\delta >0$, namely that $f$ is $\omega $-monotone for some H\"{o}lder
modulus of continuity $\omega $. Counterexamples were provided for any
larger modulus of continuity.

In this paper we consider the analogous sum of squares problem for smooth
nonnegative matrix functions $M$ that are flat at, and positive away from,
the origin. We show that such a matrix function $M=\left[ a_{kj}\right]
_{k,j=1}^{n}$ can be written as a finite sum of squares of $C^{2,\delta }$
vector fields if the diagonal entries $a_{kk}$ are $\omega $-monotone for
some H\"{o}lder modulus of continuity $\omega $, and if the off diagonal
entries satisfy certain differential bounds in terms of powers of the
diagonal entries. Examples are given to show that in some cases at least,
these differential inequalities cannot be relaxed.

Various refinements of this result are also given in which one or more of
the diagonal entries need not be assumed to have any monotonicity properties
at all. These sum of squares decompositions will be applied to
hypoellipticity in the infnitely degenerate regime in the third paper in
this series.
\end{abstract}

\maketitle
\tableofcontents

\section{Introduction}

In the theory of partial differential operators, the ability to write a pure
second order real differential operator $L\left( x\right) =\func{trace}\left[
\mathbf{A}\left( x\right) \nabla \otimes \nabla \right] $ as a sum of
squares of vector fields $X_{j}\left( x\right) $, i.e. $\mathbf{A}\left(
x\right) =\sum_{j=1}^{N}X_{j}\left( x\right) X_{j}\left( x\right) ^{\func{tr}%
}$, with some specified smoothness, has proven to be of immense value. See
e. g. work of H\"{o}rmander \cite{Hor}, Rothschild and Stein \cite{RoSt},
Christ \cite{Chr} and Sawyer, Rios and Wheeden \cite[RiSaWh]{RiSaWh} to
mention just a few. If $\mathbf{A}\left( x\right) $ is nonnegative, then the
spectral theorem shows that $\mathbf{A}\left( x\right) =\sum_{j=1}^{n}\left( 
\sqrt{\lambda _{j}\left( x\right) }\mathbf{v}_{j}\left( x\right) \right)
\left( \sqrt{\lambda _{j}\left( x\right) }\mathbf{v}_{j}\left( x\right)
\right) ^{\func{tr}}$ where $\left\{ \mathbf{v}_{j}\left( x\right) \right\}
_{j=1}^{n}$ is an orthonormal set of eigenvectors for $\mathbf{A}\left(
x\right) $, but little can be typically said regarding smoothnes of $\sqrt{%
\lambda _{j}\left( x\right) }\mathbf{v}_{j}\left( x\right) $. Thus it
becomes an important question as to whether or not a given matrix function $%
\mathbf{A}\left( x\right) $ can be represented as a finite sum of squares of
vector functions with preassigned smoothness. We will refer to the rank one
matrix $X_{j}\left( x\right) X_{j}\left( x\right) ^{\func{tr}}=X_{j}\left(
x\right) \otimes X_{j}\left( x\right) $ either as a square of a vector
field, or as a positive dyad\footnote{%
A nontrivial dyad $a\otimes b$ is positive if and only if $a=b$. Indeed, 
\begin{equation*}
\xi ^{\func{tr}}\left[ a\otimes b\right] \xi =\xi ^{\func{tr}}\left\langle
a,\xi \right\rangle b=\left\langle \xi ,b\right\rangle \left\langle a,\xi
\right\rangle \geq 0
\end{equation*}%
for all $\xi $ if and only if $a=b$.}. We point out the obvious fact that a
sum of squares $\sum_{j=1}^{N}X_{j}X_{j}^{\func{tr}}$ is positive
semidefinite and equals $\mathbf{XX}^{\func{tr}}$ where $\mathbf{X}$ is the $%
n\times N$ matrix with columns $X_{j}$.

In this paper we extend the sums of squares results for \emph{scalar}
functions obtained in \cite{KoSa1} to the setting of matrix-valued
functions, where these decompositions will be used in the subsequent and
final paper of this series \cite{KoSa3} to obtain hypoellipticity results in
the infinitely degenerate regime.

In order for an $n\times n$ nonnegative matrix function $A\left( x\right) =%
\left[ a_{ij}\left( x\right) \right] _{1\leq i,j\leq n}$ to be a sum of
squares $\sum_{\ell =1}^{N}\mathbf{v}^{\ell }\left( x\right) \otimes \mathbf{%
v}^{\ell }\left( x\right) $ of vector functions (with some specified
smoothness such as $C^{2,\delta }$), it is of course necessary that the
diagonal entries $a_{ii}\left( x\right) $ themselves be a sum of such
squares, namely $\sum_{\ell =1}^{N}\left( \mathbf{v}^{\ell }\left( x\right)
\right) _{i}^{2}$. A well known and clever construction of Fefferman-Phong 
\cite{FePh} has been used by Tataru \cite{Tat} and Bony \cite{Bon}, see also
Guan \cite{Gua}, to show that every $C^{3,1}$ scalar function $f\left(
x\right) $ can be written as a sum of squares of $C^{1,1}$ functions.
However, this degree of smoothness falls short of what is needed in the
theory of hypoellipticity of sums of squares of infinitely degenerate vector
fields, the appropriate level of smoothness being $C^{2,\delta }$ for some $%
\delta >0$. The theorem proved in \cite{KoSa1} shows that a nonnegative
scalar function $f\left( x\right) $ can be written as a sum of squares of $%
C^{2,\delta }$ functions provided $f$ is \emph{strongly} $C^{4,2\delta }$,
by which we mean $f$ is $C^{4,2\delta }$, positive away from the origin, and
satisfies $\left\vert \nabla ^{2}f\left( x\right) \right\vert ,\left\vert
\nabla ^{4}f\left( x\right) \right\vert \lesssim f\left( x\right) ^{\delta
^{\prime }}$ for some $\delta ^{\prime }>0$. A large class of examples of
such functions is the set of H\"{o}lder monotone functions, those for which
there is $C,s>0$ such that 
\begin{equation*}
f\left( y\right) \leq Cf\left( x\right) ^{s}\text{ for }y\in B\left( \frac{x%
}{2},\frac{\left\vert x\right\vert }{2}\right) .
\end{equation*}%
This class is sharp in the sense that the logarithmic version of this
inequality fails to imply a decomposition into a sum of squares of $%
C^{2,\delta }$ functions, see part (2) of Theorem \ref{sharpness} below.

Neverthess, this latter condition on the diagonal entries of $\mathbf{A}%
\left( x\right) $ is not sufficient for $\mathbf{A}\left( x\right) $ to be a
sum of squares of $C^{2,\delta }$ vector fields, as our counterexamples
below show. Instead, we assume in addition a collection of natural
inequalities on derivatives up to order four of the off diagonal entries $%
a_{ij}\left( x\right) $ having the form:%
\begin{equation*}
\left\vert D^{\alpha }a_{k,j}\left( x\right) \right\vert \lesssim \left(
\min_{1\leq \ell \leq j}a_{j,j}\left( x\right) \right) ^{\frac{1}{2}+\left(
2-\left\vert \mu \right\vert \right) \varepsilon },\ \ \ \ \ \text{fo all }%
k<j\text{, }0\leq \left\vert \mu \right\vert \leq 4\text{, and some }%
\varepsilon \geq \frac{1}{4}.
\end{equation*}%
We also give examples showing that such differential inequalities on off
diagonal entries of the matrix are sharp in some cases.

But first we recall the main results from \cite{KoSa1} that will be used
here.

\begin{definition}
A scalar or matrix function $A:\mathbb{R}^{n}\rightarrow \mathbb{R}^{N\times
N}$ is \emph{elliptical} if the associated quadratic form $Q_{A}\left( x,\xi
\right) \equiv \xi ^{\func{tr}}A\left( x\right) \xi $ is strictly positive
away from the `axes' $\left\{ 0\right\} \times \mathbb{R}^{N}$ and $\mathbb{R%
}^{n}\times \left\{ 0\right\} $ in $\mathbb{R}^{n}\times \mathbb{R}^{N}$.
\end{definition}

\begin{definition}
We say that a scalar, vector or matrix function $g:\mathbb{R}^{n}\rightarrow %
\left[ 0,\infty \right) $ is \emph{regular} if $g\in \dbigcup\limits_{\delta
>0}C^{2,\delta }\left( \mathbb{R}^{n}\right) $, i.e. $g$ is $C^{2,\delta }$
for some $0<\delta <1$.
\end{definition}

Now we recall various notions of monotonicity from \cite{KoSa1}.

\begin{definition}
Let $\omega $ be a modulus of continuity on $\left[ 0,1\right] $, and let $%
f: $ $\mathbb{R}^{n}\rightarrow \left[ 0,\infty \right) $. We say
\end{definition}

\begin{enumerate}
\item $f$ is $\omega $\emph{-monotone} if $f\left( y\right) \leq C\omega
\left( f\left( x\right) \right) $ for $y\in B\left( \frac{x}{2},\frac{%
\left\vert x\right\vert }{2}\right) $ and some positive constant $C$,

\item $\omega _{s}\left( t\right) =\left\{ 
\begin{array}{ccc}
t\left( 1+\ln \frac{1}{t}\right) & \text{ if } & s=1 \\ 
t^{s} & \text{ if } & 0<s<1 \\ 
\frac{1}{1+\ln \frac{1}{t}} & \text{ if } & s=0%
\end{array}%
\right. $, for $0\leq s,t\leq 1$,

\item $f$ is \emph{nearly monotone} if $f$ is $\omega _{s}$-monotone for
every $0\leq s<1$,

\item $f$ is \emph{H\"{o}lder monotone} if $f$ is $\omega _{s}$-monotone for
some $0<s\leq 1$.
\end{enumerate}

Finally we recall the following seminorm from \cite{Bon}, 
\begin{equation}
\left[ h\right] _{\alpha ,\delta }\left( x\right) \equiv
\limsup_{y,z\rightarrow x}\frac{\left\vert D^{\alpha }h\left( y\right)
-D^{\alpha }h\left( z\right) \right\vert }{\left\vert y-z\right\vert
^{\delta }},  \label{def mod D}
\end{equation}%
We can also norm the space $C^{m,\alpha }\left( K\right) $ with $\left\Vert
h\right\Vert _{m,\alpha }\equiv \left\Vert h\right\Vert _{C^{m}\left(
K\right) }+\sum_{\left\vert \alpha \right\vert =m}\sup_{x\in K}\left[
D^{\alpha }h\right] _{\alpha ,\delta }\left( x\right) $ when $K$ is compact.

\begin{theorem}[Theorem 28 from \protect\cite{KoSa1}]
\label{efs eps}Suppose $0<\delta <1$ and that $f$ is a $C^{4,2\delta }$
function on $\mathbb{R}^{n}$. Let%
\begin{equation*}
\rho \left( x\right) =\rho _{f}\left( x\right) \equiv \max \left\{ f\left(
x\right) ^{\frac{1}{4+2\delta }},\left( \sup_{\Theta \in \mathbb{S}^{n-1}}%
\left[ \partial _{\Theta }^{2}f\left( x\right) \right] _{+}\right) ^{\frac{1%
}{2+2\delta }},\left\vert \nabla ^{4}f\left( x\right) \right\vert ^{\frac{1}{%
2\delta }}\right\} ,\ \ \ \ \ x\in \mathbb{R}^{n}.
\end{equation*}

\begin{enumerate}
\item If $f$ satisfies the differential inequalities%
\begin{equation}
\left\vert \nabla ^{4}f\left( x\right) \right\vert \leq Cf\left( x\right) ^{%
\frac{\delta }{2+\delta }}\text{ and }\left\vert \nabla ^{2}f\left( x\right)
\right\vert \leq Cf\left( x\right) ^{\frac{2\delta \left( 1+\delta \right) }{%
2+\delta }},  \label{diff prov}
\end{equation}%
then $f=\sum_{\ell =1}^{N}g_{\ell }^{2}$ can be decomposed as a finite sum
of squares of functions $g_{\ell }\in C^{2+\delta }\left( \mathbb{R}%
^{2}\right) $ where 
\begin{eqnarray}
\left\vert D^{\alpha }g_{\ell }\left( x\right) \right\vert &\leq &C\rho
\left( x\right) ^{2+\delta -\left\vert \alpha \right\vert }\leq Cf\left(
x\right) ^{\frac{\delta }{2+\delta }\left( 2+\delta -\left\vert \alpha
\right\vert \right) },\ \ \ \ \ 0\leq \left\vert \alpha \right\vert \leq 2,
\label{root control} \\
\left[ g_{\ell }\right] _{\alpha ,\delta }\left( x\right) &\leq &C,\ \ \ \ \
\left\vert \alpha \right\vert =2.  \notag
\end{eqnarray}%
and%
\begin{eqnarray*}
\left\vert D^{\alpha }g_{\ell }^{2}\left( x\right) \right\vert &\leq &C\rho
\left( x\right) ^{4+2\delta -\left\vert \alpha \right\vert }\leq Cf\left(
x\right) ^{\frac{\delta }{2+\delta }\left( 4+2\delta -\left\vert \alpha
\right\vert \right) },\ \ \ \ \ 0\leq \left\vert \alpha \right\vert \leq 4,
\\
\left[ g_{\ell }^{2}\right] _{\alpha ,2\delta }\left( x\right) &\leq &C,\ \
\ \ \ \left\vert \alpha \right\vert =4.
\end{eqnarray*}

\item In particular, the inequalities (\ref{diff prov}) hold provided $f$ is
flat, smooth and $\omega _{s}$-monotone for some $s<1$ satisfying%
\begin{equation}
s>\sqrt[4]{\delta }\sqrt[4]{\frac{2}{4+2\delta }},\ \ \ \text{and \ \ \ }%
s\geq \sqrt{\delta }\sqrt{\frac{2+2\delta }{2+\delta }}.  \label{eps delta}
\end{equation}
\end{enumerate}
\end{theorem}

\begin{remark}
The inequalities (\ref{diff prov}) also hold if the smoothness assumption on 
$f$ is relaxed to $f\in C^{k}$, provided that $s$ is replaced by $s-\frac{C}{%
k}$ in (\ref{eps delta}) for a sufficiently large constant $C$ independent
of $k$.
\end{remark}

The next result shows that the H\"{o}lder monotone class comes close to
being sharp for a decomposition into a finite sum of squares of regular
functions.

\begin{theorem}
\label{sharpness}Let $n\geq 5$. There is an elliptical, flat, smooth $\omega
_{0}$-monotone function $f$ that cannot be written as a finite sum of
squares of regular functions. Moreover, $\omega _{0}$ can be replaced by any
modulus of continuity $\omega $ with $\omega _{s}\ll \omega $, i.e. $%
\lim_{t\searrow 0}\frac{\omega _{s}\left( t\right) }{\omega \left( t\right) }%
=0$, for all $0<s<1$.
\end{theorem}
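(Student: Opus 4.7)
The plan is to build $f$ by condensing suitably rescaled copies of a fixed non--sum-of-squares polynomial along a sequence of points accumulating at the origin. The seed is a nonnegative polynomial $P$ on $\mathbb{R}^{4}$ that admits no local representation $P = \sum_{\ell=1}^N g_\ell^2$ with $g_\ell \in C^{2,\delta}$ near some distinguished zero; classical dimension-counting arguments going back to Bony provide such $P$ once the ambient dimension is at least four, and embedding $P$ into $\mathbb{R}^n$ and using one extra coordinate for the accumulation curve explains the hypothesis $n \geq 5$. Choose points $x_k \to 0$ lying away from the coordinate axes, scales $r_k$ satisfying $r_k \leq |x_k|/10$ and $r_k \ll |x_{k+1} - x_k|$, amplitudes $\lambda_k > 0$, and a smooth, flat-at-$0$, strictly positive background $E(x) = e^{-1/|x|^2}$. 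Set
\begin{equation*}
f(x) \;=\; \sum_{k=1}^{\infty} \lambda_k\, \chi\!\left(\tfrac{x-x_k}{r_k}\right) P\!\left(\tfrac{x-x_k}{r_k}\right) \;+\; E(x),
\end{equation*}
with $\chi$ a standard cutoff. Super-polynomial decay of $\lambda_k$ guarantees smoothness and flatness of $f$ at the origin, while the presence of $E$ ensures ellipticality (positivity off the axes).

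\textbf{Monotonicity and contradiction.} The $\omega_{0}$-monotonicity $f(y) \leq C\omega_{0}(f(x))$ on $B(x/2, |x|/2)$ is verified bump by bump: the choice $r_k \leq |x_k|/10$ places each bump inside its own annulus, and calibrating $\lambda_k \asymp \omega_{0}(|x_k|^{\sigma})$ for a suitable $\sigma > 0$ yields the required estimate. For the sharper statement, calibrate $\lambda_k$ against $\omega$ instead; the hypothesis $\omega_{s} \ll \omega$ for every $s < 1$ makes this calibration compatible with super-polynomial decay of $\lambda_k$. Suppose now toward contradiction that $f = \sum_{\ell=1}^{N} g_\ell^2$ with each $g_\ell \in C^{2,\delta}$. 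Restricting to $B(x_k, r_k)$ and substituting $x = x_k + r_k y$, the rescaled functions $\tilde g_\ell^{k}(y) \equiv \lambda_k^{-1/2}\, g_\ell(x_k + r_k y)$ satisfy $\sum_{\ell} (\tilde g_\ell^{k})^{2} = P$ on $\mathrm{supp}\,\chi$. The scaling identities $\nabla^{j} \tilde g_\ell^{k} = \lambda_k^{-1/2}\, r_k^{\,j} (\nabla^{j} g_\ell)(x_k + r_k y)$ and $[\nabla^{2} \tilde g_\ell^{k}]_{\delta} = \lambda_k^{-1/2}\, r_k^{2+\delta}\, [\nabla^{2} g_\ell]_{\delta}$ give uniform $C^{2,\delta}$ bounds on the $\tilde g_\ell^{k}$ exactly when $\lambda_k \gtrsim r_k^{4+2\delta}$. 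Arzel\`a--Ascoli in $C^{2,\delta/2}$ then extracts a subsequential limit, producing a $C^{2,\delta/2}$ sum-of-squares representation of $P$ near its distinguished zero and contradicting the choice of $P$.

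\textbf{Main obstacle.} The principal difficulty is the joint calibration of $(x_k, r_k, \lambda_k)$ so that all four requirements hold at once: smoothness and flatness of $f$ at $0$, ellipticality off the axes, the modulus-of-continuity estimate on $B(x/2, |x|/2)$, and the scaling bound $\lambda_k \gtrsim r_k^{4+2\delta}$ that powers the contradiction. The first three constraints tolerate $\lambda_k$ decaying arbitrarily quickly, while the last forces a lower bound on $\lambda_k$ in terms of $r_k$; reconciling the two is possible precisely because $\omega$ decays strictly slower than every H\"older modulus $\omega_{s}$ with $s > 0$, which is the content of the hypothesis $\omega_{s} \ll \omega$.
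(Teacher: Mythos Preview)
The present paper does not prove this theorem; it is recalled from the companion paper \cite{KoSa1}. The matrix analogue in Section~4 (Lemmas~\ref{g control}--\ref{failure}) displays the mechanism, and your proposal departs from it in a way that cannot be repaired. The fatal gap is that smoothness of your $f$ and your scaling bound are mutually exclusive. For $f$ to be $C^\infty$ at the origin you need every derivative of every bump to tend to zero along the sequence, i.e.\ $\lambda_k r_k^{-m}\to 0$ for each fixed $m$, hence $\lambda_k=o(r_k^{m})$ for all $m$. Your Arzel\`a--Ascoli step, on the other hand, needs $\lambda_k\gtrsim r_k^{4+2\delta}$. Taking $m=7$ gives $\lambda_k/r_k^{4+2\delta}=o(r_k^{3-2\delta})\to 0$ for every $\delta\in(0,1)$, so the scaling bound fails. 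Your claim that the hypothesis $\omega_s\ll\omega$ reconciles this is wrong: neither constraint involves $\omega$ at all; the modulus enters only through the monotonicity requirement, which is a separate condition. (A smaller slip: after rescaling the equation is $\sum_\ell(\tilde g_\ell^{k})^2=P+E(x_k+r_ky)/\lambda_k$, not $P$; the background term must be driven to zero as well.)

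The construction that works avoids discrete bumps entirely. One writes $f(W,t)=\varphi(t)L(W)+\psi(t)+\eta(t,|W|)$ on $\mathbb{R}^{4}\times\mathbb{R}$, with $L$ a fixed homogeneous degree-four nonnegative polynomial that is not a $C^{2,\delta}$ sum of squares and $\varphi,\psi$ flat at $0$; smoothness is automatic because there are no cutoffs. The rescaling uses the homogeneity of $L$ via $W\mapsto tW$, yielding $L(W')+\tau(t)=\sum_\ell\bigl(G_\ell(tW',t)/\sqrt{\varphi(t)t^{4}}\bigr)^{2}$ with $\tau(t)=\psi(t)/(\varphi(t)t^{4})$. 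The rescaled norms do blow up like $(\varphi(t)t^{4})^{-1/2}$, so bare compactness is insufficient; instead one proves a quantitative lower bound of the type in Lemma~\ref{crucial}, compares the two blow-up rates, and chooses $\psi$ small enough relative to $\varphi$ to force a contradiction. The hypothesis $\omega_s\ll\omega$ is what permits $\psi$ to be so chosen while keeping $f$ $\omega$-monotone.
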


\begin{corollary}
Suppose that $f:\mathbb{R}^{n}\rightarrow \left[ 0,\infty \right) $ is
elliptical, flat and smooth.

\begin{enumerate}
\item Then $f$ can written as a finite sum of squares of regular functions
if $f$ is H\"{o}lder monotone.

\item Conversely, for any modulus of continuity $\omega $ satisfying $\omega
_{s}\ll \omega $ for all $0<s<1$, there is an $\omega $-monotone function $f$
that cannot be written as a finite sum of squares of regular functions.
\end{enumerate}
\end{corollary}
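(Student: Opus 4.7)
The plan is to obtain both parts of the corollary as immediate consequences of Theorem \ref{efs eps} and Theorem \ref{sharpness}, with essentially no new work beyond unwinding definitions.

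For part (1), I would first translate the hypothesis: if $f$ is H\"older monotone, then by definition $f$ is $\omega_s$-monotone for some $0 < s \leq 1$, and $s$ may be replaced by any smaller positive value (since $\omega_{s'}(t) \geq \omega_{s}(t)$ on $[0,1]$ when $s' \leq s$), so without loss of generality $0 < s < 1$. The key elementary observation is that both right-hand sides
\begin{equation*}
\sqrt[4]{\delta}\,\sqrt[4]{\tfrac{2}{4+2\delta}} \quad \text{and} \quad \sqrt{\delta}\,\sqrt{\tfrac{2+2\delta}{2+\delta}}
\end{equation*}
in (\ref{eps delta}) tend to $0$ as $\delta \searrow 0$, so I can pick $\delta = \delta(s) > 0$ small enough that both inequalities in (\ref{eps delta}) hold. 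Part (2) of Theorem \ref{efs eps} then yields the differential inequalities (\ref{diff prov}), and part (1) of the same theorem produces a decomposition $f = \sum_{\ell=1}^{N} g_\ell^{2}$ with each $g_\ell \in C^{2,\delta}$, which is regular by definition. The hypotheses of smoothness, flatness, and ellipticity of $f$ required by Theorem \ref{efs eps} are inherited directly from the statement of the corollary.

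For part (2), the statement is exactly Theorem \ref{sharpness} once the modulus $\omega_0$ there is replaced by any $\omega$ satisfying $\omega_s \ll \omega$ for every $0 < s < 1$; this extension is explicitly recorded in the second sentence of that theorem, so nothing further needs to be proved (for $n$ in the range covered by Theorem \ref{sharpness}).

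The only potential obstacle is the elementary threshold check in (\ref{eps delta}), namely confirming that two explicit continuous functions of $\delta$ vanish at $\delta = 0$ so that a valid $\delta(s)$ exists for every admissible $s$. All remaining steps are pure bookkeeping between the quoted theorems and the definitions of H\"older monotone and regular.
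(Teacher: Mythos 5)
Your proposal is correct and follows the same route the paper implicitly intends: the corollary is stated as an immediate consequence of Theorem \ref{efs eps} and Theorem \ref{sharpness}, and you derive it exactly that way, using the monotonicity of $s \mapsto \omega_s$ to reduce to $0<s<1$ and the elementary observation that the thresholds in (\ref{eps delta}) vanish as $\delta \searrow 0$. Your parenthetical caveat about $n\geq 5$ in Theorem \ref{sharpness} is apt, as that restriction is inherited from the counterexample construction and is not relaxed by the corollary's statement.
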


Finally, we recall the following estimates for nearly monotone functions
from \cite{KoSa1}.

\begin{theorem}
\label{main intro}Let $n\geq 1$. Suppose that $f:B\left( 0,a\right)
\rightarrow \left[ 0,\infty \right) $ is an elliptical flat smooth function
on $B\left( 0,a\right) \subset \mathbb{R}^{n}$. Then the first three of the
following four conditions are equivalent. Moreover, the fourth condition,
which holds in particular if $f$ is $\omega _{1}$ monotone, implies the
first three conditions, but not conversely. Finally, for any $0<s<1$, there
is an $\omega _{s}$-monotone function $f$ such that $f^{\frac{1}{s}-1}$ is
not smooth.

\begin{enumerate}
\item There is $\delta >0$ such that $f\left( x\right) ^{\gamma }$ is smooth
on $B\left( 0,a\right) $ for all $0<\gamma <\delta $.

\item For every $m\geq 1$ and $0<s<1$, there is a positive constant $\Gamma
_{n,m,s}$ such that%
\begin{equation*}
\left\vert \nabla ^{m}f\left( x\right) \right\vert \leq \Gamma
_{n,m,s}f\left( x\right) ^{s},\ \ \ \ \ \text{for }x\in B\left( 0,a\right) .
\end{equation*}

\item The functions $f\left( x\right) ^{\gamma }$ are flat smooth functions
on $B\left( 0,a\right) $ for all $\gamma >0$.

\item The function $f$ is nearly monotone.
\end{enumerate}
\end{theorem}

In this paper we will apply the above sums of squares representations for
scalar functions to obtain representations of matrix functions as sums of
squares of $C^{2,\delta }$ vector fields. For the reader's convenience we
include a schematic diagram of connections between theorems. Results in a
double box are logical ends.%
\begin{equation*}
\fbox{$%
\begin{array}{ccc}
\overset{\func{start}}{\fbox{$%
\begin{array}{c}
\text{Theorem}\ \mathbf{12} \\ 
\text{properties of} \\ 
\text{comparable matrices}%
\end{array}%
$}} &  & \overset{\func{start}}{\fbox{$%
\begin{array}{c}
\text{Definition}\ \mathbf{27} \\ 
S\text{quare }D\text{ecomposition} \\ 
\mathbf{A}=ZZ^{\func{tr}}+\mathbf{B}%
\end{array}%
$}} \\ 
\downarrow  & \swarrow  &  \\ 
\fbox{$%
\begin{array}{c}
\text{Lemma}\ \mathbf{33} \\ 
\text{diagonal ellipticity} \\ 
\text{inherited by the }SD%
\end{array}%
$} &  & \overset{\func{start}}{\fbox{$%
\begin{array}{c}
\text{Theorem}\ \mathbf{20} \\ 
\text{characterization of subordinaticity}%
\end{array}%
$}} \\ 
\downarrow  & \swarrow  &  \\ 
\fbox{$%
\begin{array}{c}
\text{Lemma}\ \mathbf{34} \\ 
\text{subordinaticity} \\ 
\text{inherited by the }SD%
\end{array}%
$} &  & \overset{\func{start}}{\fbox{$%
\begin{array}{c}
\text{Definition}\ \mathbf{31} \\ 
\left( \ell ,\varepsilon ,\delta ^{\prime },\delta ^{\prime \prime }\right) 
\text{-strongly }C^{4,2\delta }%
\end{array}%
$}} \\ 
\downarrow  & \swarrow  &  \\ 
\fbox{$%
\begin{array}{c}
\text{Lemma}\ \mathbf{35} \\ 
\text{strong }C^{4,2\delta } \\ 
\text{inherited by the }SD%
\end{array}%
$} &  &  \\ 
\downarrow  &  &  \\ 
\overset{\func{end}}{\fbox{$\fbox{$%
\begin{array}{c}
\text{Theorem}\ \mathbf{22} \\ 
\text{H\"{o}lder monotone diagonal entries} \\ 
\text{and off diagonal differential bounds} \\ 
\Longrightarrow \mathbf{A}\text{ is a finite }S\text{um }O\text{f }%
C^{2,\delta }\text{ }S\text{quares} \\ 
\text{plus a quasiconformal} \\ 
\text{subordinate term}%
\end{array}%
$}$}} &  & \overset{\func{end}}{\fbox{$\fbox{$%
\begin{array}{c}
\text{Theorem}\ \mathbf{42} \\ 
\text{sharpness of off diagonal} \\ 
\text{differential bounds in the }SOS%
\end{array}%
$}$}}%
\end{array}%
$}
\end{equation*}

\section{Statement of main matrix decomposition theorems}

\begin{definition}
Let $A$ and $B$ be real symmetric positive semidefinite $n\times n$
matrices. We define $A\preccurlyeq B$ if $B-A$ is positive semidefinite. Let 
$\beta <\alpha $ be positive constants. A real symmetric positive
semidefinite $n\times n$ matrix $A$ is said to be $\left( \beta ,\alpha
\right) $-\emph{comparable} to a symmetric $n\times n$ matrix $B$, written $%
A\sim _{\beta ,\alpha }B$, if $\beta B\preccurlyeq A\preccurlyeq \alpha B$,
i.e.%
\begin{equation}
\beta \ \xi ^{\limfunc{tr}}B\xi \leq \xi ^{\limfunc{tr}}A\xi \leq \alpha \
\xi ^{\limfunc{tr}}B\xi ,\ \ \ \ \ \text{for all }\xi \in \mathbb{R}^{n}.
\label{pos def}
\end{equation}
\end{definition}

\begin{definition}
Given two symmetric matrix-valued functions $\mathbf{A}\left( x\right) $ and 
$\mathbf{B}\left( x\right) $, we say $\mathbf{A}\left( x\right) $ is \emph{%
comparable} to $\mathbf{B}\left( x\right) $ if there are positive constants $%
0<\beta <\alpha $ \emph{independent} of $x$, such that $\mathbf{A}\left(
x\right) $ is $\left( \beta ,\alpha \right) $-comparable to $\mathbf{B}%
\left( x\right) $ for all $x\in \mathbb{R}^{n}$. In this case we write $%
\mathbf{A}\left( x\right) \sim \mathbf{B}\left( x\right) $.
\end{definition}

Note that if $A$ is \emph{comparable} to $B$, then both $A$ and $B$ are
positive semidefinite. Indeed, both $0\leq \left( \alpha -\beta \right) \xi
^{\limfunc{tr}}B\xi $ and $0\leq \left( \frac{1}{\beta }-\frac{1}{\alpha }%
\right) \xi ^{\limfunc{tr}}A\xi $ hold for all $\xi \in \mathbb{R}^{n}$.
Moreover, for any $x$ we have that $\mathbf{A}\left( x\right) $ is positive
definite if and only if $\mathbf{B}\left( x\right) $ is positive definite.

We first give a simple characterization of when a symmetric positive
semidefinite matrix-valued function is comparable to a diagonal
matrix-valued function that is positive away from the origin. In order to
state this result precisely, we first establish the general fact that if a
matrix $\mathbf{A}$ is $\left( \beta ,\alpha \right) $-comparable to a
diagonal matrix $\mathbf{D}_{\mathbf{\lambda }}$ as above, then $\mathbf{A}$
is $\left( \frac{\beta }{\alpha },\frac{\alpha }{\beta }\right) $-comparable
to its associated diagonal matrix 
\begin{equation*}
\mathbf{A}_{\func{diag}}\equiv \mathbf{D}_{\mathbf{F}}=\left[ 
\begin{array}{cccc}
F_{1} & 0 & \cdots & 0 \\ 
0 & F_{2} & \cdots & 0 \\ 
\vdots & \vdots & \ddots & \vdots \\ 
0 & 0 & \cdots & F_{n}%
\end{array}%
\right] ,
\end{equation*}%
in short $\mathbf{A}\sim _{\beta ,\alpha }\mathbf{D}_{\mathbf{\lambda }%
}\Longrightarrow \mathbf{A}\sim _{\frac{\beta }{\alpha },\frac{\alpha }{%
\beta }}\mathbf{A}_{\func{diag}}$, equivalently 
\begin{equation*}
\beta \mathbf{D}_{\mathbf{\lambda }}\preccurlyeq \mathbf{A}\preccurlyeq
\alpha \mathbf{D}_{\mathbf{\lambda }}\Longrightarrow \frac{\beta }{\alpha }%
\mathbf{A}_{\func{diag}}\preccurlyeq \mathbf{A}\preccurlyeq \frac{\alpha }{%
\beta }\mathbf{A}_{\func{diag}}.
\end{equation*}%
Indeed, it is an easy exercise to show that $\beta \mathbf{D}_{\mathbf{%
\lambda }}\preccurlyeq \mathbf{A}\preccurlyeq \alpha \mathbf{D}_{\mathbf{%
\lambda }}$ implies $\beta \lambda _{k}\leq F_{k}\leq \alpha \lambda _{k}$
implies $\beta \mathbf{D}_{\mathbf{\lambda }}\preccurlyeq \mathbf{D}_{%
\mathbf{F}}\preccurlyeq \alpha \mathbf{D}_{\mathbf{\lambda }}$, and since $%
\mathbf{D}_{\mathbf{F}}=\mathbf{A}_{\func{diag}}$ we then have%
\begin{equation*}
\frac{\beta }{\alpha }\mathbf{A}_{\func{diag}}\preccurlyeq \beta \mathbf{D}_{%
\mathbf{\lambda }}\preccurlyeq \mathbf{A}\preccurlyeq \alpha \mathbf{D}_{%
\mathbf{\lambda }}\preccurlyeq \frac{\alpha }{\beta }\mathbf{A}_{\func{diag}%
}\ .
\end{equation*}

This becomes important when we consider matrix-valued functions $\mathbf{A}%
\left( x\right) $ (of a variable $x\in \mathbb{R}^{m}$) that are $\left(
\beta ,\alpha \right) $-comparable to a diagonal matrix-valued function $%
\mathbf{D}\left( x\right) $, and which are positive definite away from the
origin. For example, the matrix function $\mathbf{A}\left( x\right) \equiv %
\left[ 
\begin{array}{cc}
1 & 1-e^{-\frac{1}{x^{2}}} \\ 
1-e^{-\frac{1}{x^{2}}} & 1%
\end{array}%
\right] $ is positive definite away from the origin, yet is \emph{not} $%
\left( \beta ,\alpha \right) $-comparable to any diagonal matrix-valued
function $\mathbf{D}\left( x\right) $. Indeed, if it were, then $\mathbf{A}%
\left( x\right) $ would be $\left( \beta ,\alpha \right) $-comparable to its
associated diagonal matrix, i.e. the identity matrix $\left[ 
\begin{array}{cc}
1 & 0 \\ 
0 & 1%
\end{array}%
\right] $, which clearly fails for $x$ close to the origin. It turns out
that for matix functions $\mathbf{A}\left( x\right) $ that are both positive
definite away from the origin and $\left( \beta ,\alpha \right) $-comparable
to some diagonal matrix-valued function $\mathbf{D}\left( x\right) $, we can
derive useful consequences on the off diagonal entries of $\mathbf{A}\left(
x\right) $. Since such matrix functions play a major role in the sequel we
make a definition to encompass them.

\begin{definition}
We say that a matrix-valued function $\mathbf{A}\left( x\right) $ is \emph{%
diagonally elliptical} on $\mathbb{R}^{m}$ if $\mathbf{A}\left( x\right) $
is comparable to a diagonal matrix-valued function $\mathbf{D}\left(
x\right) $ for all $x\in \mathbb{R}^{n}$, and if $\mathbf{A}\left( x\right) $
is positive definite away from the origin.
\end{definition}

We see from the above discussion that, if $\mathbf{A}\left( x\right) $ is
diagonally elliptical, then $\mathbf{A}\left( x\right) $ is comparable to
its associated diagonal matrix-valued function $\mathbf{A}_{\func{diag}%
}\left( x\right) $, a fact which will play a key role in deriving useful
properties of diagonally elliptical matrix functions. In the next two
subsections we state and prove a relatively simple property of a diagonally
elliptic matrix function, as well as an easy characterization of when a
matrix function is subordinate. In the third subsection we state our sum of $%
C^{2,\delta }$ squares theorem, which is then proved in the third section,
and finally we demonstrate in Section 4 some results on sharpness.

\subsection{The comparability theorem}

First note that if $\mathbf{A}\left( x\right) \sim \mathbf{B}\left( x\right) 
$, then $\widehat{\mathbf{A}}\left( x\right) \sim \widehat{\mathbf{B}}\left(
x\right) $, with the same comparability constants $0<\beta <\alpha <\infty $%
, where $\widehat{\mathbf{A}}\left( x\right) $ is any principal submatrix of 
$\mathbf{A}\left( x\right) $ and $\widehat{\mathbf{B}}\left( x\right) $ is
the corresponding principal submatrix of $\mathbf{B}\left( x\right) $. Here
is the only other consequence of comparability that we will need.

\begin{theorem}
\label{comp mat}A symmetric positive definite $n\times n$ matrix-valued
function 
\begin{equation*}
\mathbf{A}\left( x\right) =\left[ a_{k,j}\left( x\right) \right]
_{k,j=1}^{n}=\left[ 
\begin{array}{cc}
a_{11}\left( x\right) & b\left( x\right) ^{\func{tr}} \\ 
b\left( x\right) & \mathbf{D}\left( x\right)%
\end{array}%
\right]
\end{equation*}%
is comparable to its associated diagonal matrix-valued function $\mathbf{A}_{%
\func{diag}}\left( x\right) $\emph{\ only if} there is $0<\beta <1$ such that%
\begin{equation}
b\left( x\right) ^{\func{tr}}\left[ \mathbf{D}\left( x\right) -\beta \mathbf{%
D}_{\func{diag}}\left( x\right) \right] ^{-1}b\left( x\right) <\left(
1-\beta \right) a_{11}\left( x\right) ,\ \ \ \ \ \text{ for all }x.
\label{necc cond n}
\end{equation}%
In particular,%
\begin{equation}
\left\vert a_{k,j}\left( x\right) \right\vert <\left( 1-\beta \right) \sqrt{%
a_{k,k}\left( x\right) a_{j,j}\left( x\right) },\ \ \ \ \ \text{for all }%
1\leq k<j\leq n\text{ and all }x.  \label{2 by 2}
\end{equation}
\end{theorem}

For this we will use the following three lemmas.

\begin{lemma}
\label{square root}Let $\mathbf{A}$ be a real symmetric positive
semidefinite $n\times n$ matrix. Then there is a real symmetric positive
semidefinite $n\times n$ real matrix $\sqrt{\mathbf{A}}$ satisfying $\mathbf{%
A}=\sqrt{\mathbf{A}}\sqrt{\mathbf{A}}$.
\end{lemma}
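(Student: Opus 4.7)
The plan is to invoke the spectral theorem for real symmetric matrices. Since $\mathbf{A}$ is real symmetric, there exist a real orthogonal matrix $Q$ and a real diagonal matrix $\mathbf{D}_{\boldsymbol{\lambda}}=\operatorname{diag}(\lambda_{1},\dots,\lambda_{n})$ such that $\mathbf{A}=Q^{\operatorname{tr}}\mathbf{D}_{\boldsymbol{\lambda}}Q$. Positive semidefiniteness of $\mathbf{A}$ forces each eigenvalue $\lambda_{k}\geq 0$, since $\lambda_{k}=e_{k}^{\operatorname{tr}}\mathbf{D}_{\boldsymbol{\lambda}}e_{k}=(Qe_{k})^{\operatorname{tr}}\mathbf{A}(Qe_{k})\geq 0$ for the standard basis vectors $e_{k}$. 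Therefore the diagonal matrix $\mathbf{D}_{\sqrt{\boldsymbol{\lambda}}}\equiv \operatorname{diag}(\sqrt{\lambda_{1}},\dots,\sqrt{\lambda_{n}})$ is well-defined with real nonnegative entries.

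Now define
\[
\sqrt{\mathbf{A}}\;\equiv\;Q^{\operatorname{tr}}\mathbf{D}_{\sqrt{\boldsymbol{\lambda}}}\,Q.
\]
Four properties must be verified. First, $\sqrt{\mathbf{A}}$ is real because $Q$ and $\mathbf{D}_{\sqrt{\boldsymbol{\lambda}}}$ are. Second, it is symmetric: $(\sqrt{\mathbf{A}})^{\operatorname{tr}}=Q^{\operatorname{tr}}\mathbf{D}_{\sqrt{\boldsymbol{\lambda}}}^{\operatorname{tr}}Q=Q^{\operatorname{tr}}\mathbf{D}_{\sqrt{\boldsymbol{\lambda}}}Q$. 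Third, it is positive semidefinite: for any $\xi\in\mathbb{R}^{n}$, setting $\eta=Q\xi$,
\[
\xi^{\operatorname{tr}}\sqrt{\mathbf{A}}\,\xi \;=\; \eta^{\operatorname{tr}}\mathbf{D}_{\sqrt{\boldsymbol{\lambda}}}\,\eta \;=\; \sum_{k=1}^{n}\sqrt{\lambda_{k}}\,\eta_{k}^{2}\;\geq\; 0.
\]
Fourth, using $QQ^{\operatorname{tr}}=I$ and the fact that diagonal matrices commute,
\[
\sqrt{\mathbf{A}}\sqrt{\mathbf{A}} \;=\; Q^{\operatorname{tr}}\mathbf{D}_{\sqrt{\boldsymbol{\lambda}}}\,QQ^{\operatorname{tr}}\mathbf{D}_{\sqrt{\boldsymbol{\lambda}}}\,Q \;=\; Q^{\operatorname{tr}}\mathbf{D}_{\sqrt{\boldsymbol{\lambda}}}^{2}\,Q \;=\; Q^{\operatorname{tr}}\mathbf{D}_{\boldsymbol{\lambda}}\,Q \;=\; \mathbf{A}.
\]
There is no genuine obstacle here: once the spectral theorem is in hand, the construction is forced and the verifications are immediate. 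The only subtle point worth emphasizing is reality — one must use the real spectral theorem for symmetric matrices (not merely unitary diagonalization) to ensure that $\sqrt{\mathbf{A}}$ has real entries as required.
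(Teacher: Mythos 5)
Your proof is correct and follows essentially the same route as the paper: orthogonally diagonalize $\mathbf{A}$ via the real spectral theorem, take entrywise square roots of the nonnegative eigenvalues, and conjugate back. The only difference is that you spell out the verifications of reality, symmetry, and positive semidefiniteness explicitly, whereas the paper states them without detail.
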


\begin{proof}
Let $\mathbf{P}$ be an orthogonal matrix such that $\mathbf{PAP}^{\limfunc{tr%
}}=\mathbf{D}$ is a diagonal matrix with nonnegative entries $\left\{
\lambda _{i}\right\} _{i=1}^{n}$ along the diagonal. If $\sqrt{\mathbf{D}}$
is the diagonal matrix with entries $\left\{ \sqrt{\lambda _{i}}\right\}
_{i=1}^{n}$ along the diagonal, then the matrix $\sqrt{\mathbf{A}}\equiv 
\mathbf{P}^{\limfunc{tr}}\sqrt{\mathbf{D}}\mathbf{P}$ is symmetric and
positive semidefinite, and satisfies 
\begin{equation*}
\sqrt{\mathbf{A}}\sqrt{\mathbf{A}}=\mathbf{P}^{\limfunc{tr}}\sqrt{\mathbf{D}}%
\mathbf{PP}^{\limfunc{tr}}\sqrt{\mathbf{D}}\mathbf{P}=\mathbf{P}^{\limfunc{tr%
}}\sqrt{\mathbf{D}}\sqrt{\mathbf{D}}\mathbf{P}=\mathbf{P}^{\limfunc{tr}}%
\mathbf{DP}=\mathbf{A}.
\end{equation*}
\end{proof}

\begin{lemma}
\label{deter}Let $n\geq 2$. For any number $\alpha $, $\left( n-1\right) $%
-dimensional vector $v$ and invertible $\left( n-1\right) \times \left(
n-1\right) $ matrix $\mathbf{M}$,\ we have the determinant formula%
\begin{equation*}
\det \left[ 
\begin{array}{cc}
\alpha & v^{\limfunc{tr}} \\ 
v & \mathbf{M}%
\end{array}%
\right] =\alpha \det \mathbf{M}-v^{\limfunc{tr}}\left[ \limfunc{co}\mathbf{M}%
\right] ^{\func{tr}}v=\left\{ \alpha -v^{\limfunc{tr}}\mathbf{M}%
^{-1}v\right\} \det \mathbf{M\ .}
\end{equation*}
\end{lemma}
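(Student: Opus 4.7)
The plan is to establish both equalities by a block matrix manipulation that avoids any combinatorial sign bookkeeping. For the second equality, I would write down the block $LDU$ (Schur complement) factorization
\begin{equation*}
\left[ \begin{array}{cc} \alpha & v^{\mathrm{tr}} \\ v & \mathbf{M} \end{array}\right] = \left[ \begin{array}{cc} 1 & v^{\mathrm{tr}}\mathbf{M}^{-1} \\ 0 & I \end{array}\right] \left[ \begin{array}{cc} \alpha - v^{\mathrm{tr}}\mathbf{M}^{-1}v & 0 \\ 0 & \mathbf{M} \end{array}\right] \left[ \begin{array}{cc} 1 & 0 \\ \mathbf{M}^{-1}v & I \end{array}\right],
\end{equation*}
and verify it by direct block multiplication (the invertibility hypothesis on $\mathbf{M}$ is used here). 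Since the two outer triangular factors both have determinant $1$ and the middle factor is block diagonal, taking determinants of both sides immediately gives $\det = (\alpha - v^{\mathrm{tr}}\mathbf{M}^{-1}v)\det \mathbf{M}$, which is the right-hand equality.

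For the middle equality, I would invoke the adjugate identity $\mathbf{M}^{-1}\det \mathbf{M} = [\mathrm{co}\,\mathbf{M}]^{\mathrm{tr}}$, valid since $\mathbf{M}$ is invertible. Multiplying by $v$ on the right and $v^{\mathrm{tr}}$ on the left yields $v^{\mathrm{tr}}[\mathrm{co}\,\mathbf{M}]^{\mathrm{tr}} v = (\det \mathbf{M}) \, v^{\mathrm{tr}}\mathbf{M}^{-1} v$. Substituting this into the right-hand equality gives $\alpha \det \mathbf{M} - v^{\mathrm{tr}}[\mathrm{co}\,\mathbf{M}]^{\mathrm{tr}} v = (\alpha - v^{\mathrm{tr}}\mathbf{M}^{-1}v)\det \mathbf{M}$, proving both equalities simultaneously.

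There is essentially no serious obstacle here; the lemma is a standard determinant identity whose proof is elementary linear algebra. The only caution is that the $LDU$ factorization only makes sense when $\mathbf{M}$ is invertible, which is assumed. An alternative purely cofactor-expansion argument would expand $\det$ along the first row: the cofactor of $\alpha$ is $\det \mathbf{M}$, and the cofactor of each $v_j$ is $\pm$ an $(n-1)\times(n-1)$ determinant whose first column is $v$ and whose other columns are those of $\mathbf{M}$ with column $j$ deleted; a second expansion along that first column recovers $-v^{\mathrm{tr}}[\mathrm{co}\,\mathbf{M}]^{\mathrm{tr}} v$ after tracking signs. I would prefer the Schur-complement route above precisely because it bypasses this bookkeeping.
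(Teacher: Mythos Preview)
Your proof is correct. The paper, by contrast, takes the direct route: it writes ``We prove only the case $n=3$'' and then expands the $3\times 3$ determinant along the first row, regrouping the resulting $2\times 2$ minors by hand to recognize the quadratic form $v^{\mathrm{tr}}[\mathrm{co}\,\mathbf{M}]v$. Your Schur-complement factorization handles all $n\geq 2$ uniformly with no sign bookkeeping and makes the role of the invertibility hypothesis transparent; the paper's computation is more concrete but, as written, only an illustration for $n=3$ rather than a proof for general $n$. Your alternative cofactor-expansion sketch is in fact closer in spirit to what the paper does, just carried out in general rather than for a single small case.
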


\begin{proof}
We prove only the case $n=3$, in which case we compute that with $\alpha =d$%
, $v=\left( 
\begin{array}{c}
a \\ 
b%
\end{array}%
\right) $ and $\mathbf{M}=\left[ 
\begin{array}{cc}
e & c \\ 
c & f%
\end{array}%
\right] $ we have%
\begin{eqnarray*}
\det \left[ 
\begin{array}{ccc}
d & a & b \\ 
a & e & c \\ 
b & c & f%
\end{array}%
\right] &=&d\det \left[ 
\begin{array}{cc}
e & c \\ 
c & f%
\end{array}%
\right] -a\det \left[ 
\begin{array}{cc}
a & b \\ 
c & f%
\end{array}%
\right] +b\det \left[ 
\begin{array}{cc}
a & b \\ 
e & c%
\end{array}%
\right] \\
&=&d\det \left[ 
\begin{array}{cc}
e & c \\ 
c & f%
\end{array}%
\right] -a\left( af-bc\right) +b\left( ac-be\right) \\
&=&d\det \left[ 
\begin{array}{cc}
e & c \\ 
c & f%
\end{array}%
\right] -a^{2}f+abc+abc-b^{2}e
\end{eqnarray*}%
and we continue with%
\begin{eqnarray*}
\det \left[ 
\begin{array}{ccc}
d & a & b \\ 
a & e & c \\ 
b & c & f%
\end{array}%
\right] &=&d\det \left[ 
\begin{array}{cc}
e & c \\ 
c & f%
\end{array}%
\right] -\left( 
\begin{array}{cc}
a & b%
\end{array}%
\right) \left[ 
\begin{array}{cc}
f & -c \\ 
-c & e%
\end{array}%
\right] \left( 
\begin{array}{c}
a \\ 
b%
\end{array}%
\right) \\
&=&d\det \left[ 
\begin{array}{cc}
e & c \\ 
c & f%
\end{array}%
\right] -\left( 
\begin{array}{cc}
a & b%
\end{array}%
\right) \limfunc{co}\left[ 
\begin{array}{cc}
e & c \\ 
c & f%
\end{array}%
\right] \left( 
\begin{array}{c}
a \\ 
b%
\end{array}%
\right) \\
&=&\det \left[ 
\begin{array}{cc}
e & c \\ 
c & f%
\end{array}%
\right] \left\{ d-\left( 
\begin{array}{cc}
a & b%
\end{array}%
\right) \left[ 
\begin{array}{cc}
e & c \\ 
c & f%
\end{array}%
\right] ^{-1}\left( 
\begin{array}{c}
a \\ 
b%
\end{array}%
\right) \right\} .
\end{eqnarray*}
\end{proof}

The next lemma will use the following well known characterization of
positive definite matrices.

\begin{theorem}
\label{well known}A symmetric $n\times n$ matix $A$ is positive definite 
\emph{if and only if} the determinant of every right lower principal
submatrix of $A$ is positive.
\end{theorem}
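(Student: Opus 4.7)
The plan is to prove the statement by induction on $n$, with Lemma~\ref{deter} serving as the algebraic bridge in the inductive step. The base case $n=1$ is immediate, since a $1\times 1$ matrix $[a]$ is positive definite exactly when $a>0$, which coincides with its unique right lower principal minor being positive.

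For the necessary direction, suppose $A$ is positive definite. For any $k\times k$ right lower principal submatrix $\mathbf{M}_{k}$ and any nonzero $\eta\in\mathbb{R}^{k}$, embedding $\eta$ as $\xi=(0,\eta)\in\mathbb{R}^{n}$ yields $\eta^{\func{tr}}\mathbf{M}_{k}\eta=\xi^{\func{tr}}A\xi>0$, so $\mathbf{M}_{k}$ is itself positive definite. Its eigenvalues are therefore all positive, and hence $\det\mathbf{M}_{k}>0$.

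For the sufficient direction, write
\[
A=\begin{bmatrix}\alpha & v^{\func{tr}}\\ v & \mathbf{M}\end{bmatrix},
\]
where $\mathbf{M}$ is the $(n-1)\times(n-1)$ right lower principal submatrix of $A$. Every right lower principal submatrix of $\mathbf{M}$ is itself a right lower principal submatrix of $A$, so by induction $\mathbf{M}$ is positive definite and in particular invertible. Lemma~\ref{deter} then gives $\det A=(\alpha - v^{\func{tr}}\mathbf{M}^{-1}v)\det\mathbf{M}$, and since $\det A>0$ and $\det\mathbf{M}>0$, the Schur complement satisfies $\alpha - v^{\func{tr}}\mathbf{M}^{-1}v>0$. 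For $\xi=(\xi_{1},\xi')\in\mathbb{R}\times\mathbb{R}^{n-1}$, the substitution $\eta=\xi'+\xi_{1}\mathbf{M}^{-1}v$ completes the square to the identity
\[
\xi^{\func{tr}}A\xi=\bigl(\alpha-v^{\func{tr}}\mathbf{M}^{-1}v\bigr)\xi_{1}^{2}+\eta^{\func{tr}}\mathbf{M}\eta,
\]
a sum of two nonnegative terms; since the change of variables $(\xi_{1},\xi')\mapsto(\xi_{1},\eta)$ is invertible with $\xi_{1}$ fixed, nonzero $\xi$ forces $(\xi_{1},\eta)\neq 0$, and hence $\xi^{\func{tr}}A\xi>0$. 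I do not anticipate any real obstacle: Lemma~\ref{deter} already carries the essential algebraic content, and the only remaining work is the Schur-complement bookkeeping in the completion-of-squares step, which is routine.
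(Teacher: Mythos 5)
The paper states Theorem~\ref{well known} without proof, citing it as a well-known characterization, so there is no in-paper argument to compare against; your proof is offered where the authors offer none. That said, your induction is correct and is the standard Schur-complement form of Sylvester's criterion (transposed to lower-right minors to match the paper's convention). The necessary direction (restriction of the quadratic form to the last $k$ coordinates) is fine; in the sufficient direction, the inductive hypothesis applies to $\mathbf{M}$ because the right lower principal minors of $\mathbf{M}$ are exactly those of $A$ of sizes $1,\dots,n-1$, Lemma~\ref{deter} then yields $\alpha - v^{\func{tr}}\mathbf{M}^{-1}v > 0$ from $\det A > 0$ and $\det\mathbf{M} > 0$, and the completion-of-squares identity
\begin{equation*}
\xi^{\func{tr}}A\xi=\bigl(\alpha-v^{\func{tr}}\mathbf{M}^{-1}v\bigr)\xi_{1}^{2}+\bigl(\xi'+\xi_{1}\mathbf{M}^{-1}v\bigr)^{\func{tr}}\mathbf{M}\bigl(\xi'+\xi_{1}\mathbf{M}^{-1}v\bigr)
\end{equation*}
is easily verified by expanding both sides. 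Your closing observation that $(\xi_{1},\xi')\mapsto(\xi_{1},\eta)$ is an invertible linear change of coordinates is exactly what is needed to conclude strict positivity for all nonzero $\xi$. No gap.
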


\begin{lemma}
\label{lower bound}Let $\mathbf{f}$ and $\mathbf{F}$ be real symmetric
positive definite $\left( n-1\right) \times \left( n-1\right) $ matrices,
and let $v$ be a real $\left( n-1\right) $-dimensional column vector. Then
for $\beta $ real, 
\begin{equation*}
\beta \left[ 
\begin{array}{cc}
H & 0^{\limfunc{tr}} \\ 
0 & \mathbf{f}%
\end{array}%
\right] \prec \left[ 
\begin{array}{cc}
h^{2} & v^{\limfunc{tr}} \\ 
v & \mathbf{F}%
\end{array}%
\right] \ ,
\end{equation*}%
\emph{if and only if}%
\begin{eqnarray*}
h^{2}-\beta H &>&0, \\
\mathbf{G}_{\beta } &\equiv &\mathbf{F}-\beta \mathbf{f}\text{ is real,
symmetric, and positive definite,}
\end{eqnarray*}%
and%
\begin{equation*}
v^{\limfunc{tr}}\mathbf{G}_{\beta }^{-1}v\leq h^{2}-\beta H.
\end{equation*}
\end{lemma}

\begin{proof}
Theorem \ref{well known} and Lemma \ref{deter}\ show that%
\begin{equation*}
0\prec \left[ 
\begin{array}{cc}
h^{2}-\beta H & v^{\limfunc{tr}} \\ 
v & \mathbf{F}-\beta \mathbf{f}%
\end{array}%
\right] =\left[ 
\begin{array}{cc}
h^{2}-\beta H & v^{\limfunc{tr}} \\ 
v & \mathbf{G}_{\beta }%
\end{array}%
\right]
\end{equation*}%
holds if and only if $h^{2}-\beta H>0$, $\mathbf{G}_{\beta }\succ 0$ and 
\begin{equation*}
0<\det \left[ 
\begin{array}{cc}
h^{2}-\beta H & v^{\limfunc{tr}} \\ 
v & \mathbf{G}_{\beta }%
\end{array}%
\right] =\left( \det \mathbf{G}_{\beta }\right) \left\{ h^{2}-\beta H-v^{%
\limfunc{tr}}\mathbf{G}_{\beta }^{-1}v\right\} ,
\end{equation*}%
which in turn holds if and only if%
\begin{equation*}
v^{\limfunc{tr}}\mathbf{G}_{\beta }^{-1}v<h^{2}-\beta H.
\end{equation*}
\end{proof}

Now we can prove Theorem \ref{comp mat}.

\begin{proof}[Proof of Theorem \protect\ref{comp mat}]
Suppose $\mathbf{A}\sim \mathbf{A}_{\func{diag}}$, say 
\begin{equation*}
\beta \mathbf{A}_{\func{diag}}\preccurlyeq \mathbf{A}\preccurlyeq \alpha 
\mathbf{A}_{\func{diag}},\ \ \ \ \ \text{with }0<\beta <\alpha <\infty ,
\end{equation*}%
where $1$ must belong to $\left[ \beta ,\alpha \right] $ in this case. Now
apply Lemma \ref{lower bound} with $h^{2}=a_{11}$, $\mathbf{f}=\mathbf{D}_{%
\func{diag}}$ and $\mathbf{F}=\mathbf{D}$ to obtain%
\begin{equation*}
b^{\limfunc{tr}}\left[ \mathbf{D}-\beta \mathbf{D}_{\func{diag}}\right]
^{-1}b\leq a_{11}-\beta a_{11}=\left( 1-\beta \right) a_{11}\ ,
\end{equation*}%
which is (\ref{necc cond n}).

To obtain (\ref{2 by 2}), we use the fact noted just before Theorem \ref%
{comp mat}, that every $2\times 2$ principal submatrix $\widehat{\mathbf{A}}=%
\left[ 
\begin{array}{cc}
a_{k,k} & a_{k,j} \\ 
a_{k,j} & a_{j,j}%
\end{array}%
\right] $ of $\mathbf{A}$ satisfies $\widehat{\mathbf{A}}\sim \widehat{%
\mathbf{A}}_{\func{diag}}=\widehat{\mathbf{A}_{\func{diag}}}$ with the \emph{%
same} comparability constants $\beta <\alpha $, and hence by (\ref{necc cond
n}) we conclude that%
\begin{eqnarray*}
a_{k,j}\left[ a_{j,j}-\beta a_{j,j}\right] ^{-1}a_{k,j} &\leq &a_{k,k}-\beta
a_{k,k}=\left( 1-\beta \right) a_{k,k}, \\
\text{i.e. }\left\vert a_{k,j}\right\vert ^{2} &\leq &\left( a_{j,j}-\beta
a_{j,j}\right) \left( 1-\beta \right) a_{k,k}=\left( 1-\beta \right)
^{2}a_{j,j}a_{k,k}\ ,
\end{eqnarray*}%
which is (\ref{2 by 2}).
\end{proof}

\subsection{The subordinaticity theorem}

There is a second concept that will play a significant role in
hypoellipticity theorems, and which we now introduce.

\begin{definition}
A symmetric matrix function $A\left( x\right) $ defined for $x\in \mathbb{R}%
^{M}$ is said to be \emph{subordinate} if there is $\Gamma >0$ such that for
every first order partial derivative $\frac{\partial }{\partial x_{k}}$, the
matrix $\frac{\partial }{\partial x_{k}}A\left( x\right) $ exists and
satisfies%
\begin{equation}
\left\vert \frac{\partial }{\partial x_{k}}A\left( x\right) \xi \right\vert
^{2}\leq \Gamma ^{2}\xi ^{\limfunc{tr}}A\left( x\right) \xi ,\ \ \ \ \ 1\leq
k\leq M.  \label{subordinate}
\end{equation}
\end{definition}

The inequality (\ref{subordinate}) holds for all smooth nonnegative \emph{%
diagonal} matrices $D\left( x\right) $, by the classical Malgrange
inequality for $C^{2}$ scalar functions. However, we note that the
subordination property (\ref{subordinate}) \textbf{fails} miserably for
nondiagonal matrices $A\left( x\right) $ in general, even for $2\times 2$
matrices in one variable $x\in \mathbb{R}$ that are comparable to a smooth
diagonally elliptical matrix $A\left( x\right) $ that is a sum of squares of
smooth vector fields. For example 
\begin{equation*}
A\left( x\right) =\left[ 
\begin{array}{cc}
1 & \gamma f\left( x\right) \\ 
\gamma f\left( x\right) & f\left( x\right) ^{2}%
\end{array}%
\right] ,\ \ \ \ \ \text{where }0<\left\vert \gamma \right\vert <1,
\end{equation*}%
fails to be subordinate since%
\begin{equation*}
\left\vert f^{\prime }\left( x\right) \left[ 
\begin{array}{cc}
0 & \gamma \\ 
\gamma & 2f\left( x\right)%
\end{array}%
\right] \left( 
\begin{array}{c}
\xi _{1} \\ 
\xi _{2}%
\end{array}%
\right) \right\vert ^{2}\leq C\left( 
\begin{array}{cc}
\xi _{1} & \xi _{2}%
\end{array}%
\right) \left[ 
\begin{array}{cc}
1 & \gamma f\left( x\right) \\ 
\gamma f\left( x\right) & f\left( x\right) ^{2}%
\end{array}%
\right] \left( 
\begin{array}{c}
\xi _{1} \\ 
\xi _{2}%
\end{array}%
\right) ,
\end{equation*}%
implies with $\left( 
\begin{array}{c}
\xi _{1} \\ 
\xi _{2}%
\end{array}%
\right) =\left( 
\begin{array}{c}
0 \\ 
1%
\end{array}%
\right) $ that $f^{\prime }\left( x\right) ^{2}\left\vert \left( 
\begin{array}{c}
\gamma \\ 
2f\left( x\right)%
\end{array}%
\right) \right\vert ^{2}\leq Cf\left( x\right) ^{2}$, and hence $\left\vert 
\frac{d}{dx}\ln f\left( x\right) \right\vert ^{2}=\frac{f^{\prime }\left(
x\right) ^{2}}{f\left( x\right) ^{2}}\leq \frac{C}{\gamma ^{2}}$. But $\ln
f\left( x\right) $ cannot be bounded near $0$ for any smooth nonnegative $%
f\left( x\right) $ that vanishes at $0$. However, in the case $f$ is smooth,
then $A\left( x\right) $ is a sum of smooth squares%
\begin{equation*}
A\left( x\right) =\left( 
\begin{array}{c}
1 \\ 
\gamma f\left( x_{1}\right)%
\end{array}%
\right) \otimes \left( 
\begin{array}{c}
1 \\ 
\gamma f\left( x_{1}\right)%
\end{array}%
\right) +\left( 
\begin{array}{c}
0 \\ 
\sqrt{1-\gamma ^{2}}f\left( x_{1}\right)%
\end{array}%
\right) \otimes \left( 
\begin{array}{c}
0 \\ 
\sqrt{1-\gamma ^{2}}f\left( x_{1}\right)%
\end{array}%
\right) ,
\end{equation*}%
illustrating the fact that a sum of smooth squares need not be subordinate.
Conversely, in the final section of this paper, we give an example in
Theorem \ref{C 1 delta} of a diagonally elliptical $3\times 3$ matrix
function $\mathbf{Q}\left( x,y,z\right) $ of three variables that is
subordinate, but not a finite sum of squares of $C^{1,\delta }$ vector
fields for any $\delta >0$.

\begin{definition}
A matrix function $\mathbf{M}$ is said to be $\mathcal{SOS}_{k,\delta }$ if
it can be written as a finite sum of squares of $C^{k,\delta }$ vector
fields.
\end{definition}

\begin{conclusion}
The property that a matrix function $\mathbf{M}$ is $\mathcal{SOS}_{1,\delta
}$ is in general \emph{incomparable} with the property that $\mathbf{M}$ is
subordinate.
\end{conclusion}

\begin{theorem}
\label{sub char}If $A=\left[ a_{ij}\right] _{i,j=1}^{n}$ is an $n\times n$
positive semidefinite $C^{2}$ matrix function that is comparable to a
diagonal matrix function, then $A$ is subordinate \emph{if and only if}%
\begin{equation}
\left\vert \nabla a_{ij}\right\vert ^{2}\lesssim \min \left\{
a_{ii},a_{jj}\right\} ,\ \ \ \ \ 1\leq i,j\leq n.  \label{sub cond n}
\end{equation}%
In particular, a matrix function $A=\left[ a_{ij}\right] _{i,j=1}^{n}$ with $%
a_{ii}\approx \lambda $ for all $1\leq i\leq n$, is subordinate \emph{if and
only if}%
\begin{equation*}
\left\vert \nabla a_{ij}\right\vert ^{2}\lesssim \lambda ,\ \ \ \ \ 1\leq
i,j\leq n.
\end{equation*}%
Note that this latter set of inequalities amount to assuming an extension of
Malgrange's classical inequality $\left\vert \nabla a_{ii}\right\vert
^{2}\lesssim a_{ii}\,\ $(which requires $a_{ii}\in C^{2}$) to the off
diagonal case $i\neq j$.
\end{theorem}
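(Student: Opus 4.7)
The key observation is that by the discussion immediately preceding Theorem \ref{comp mat}, any $A$ that is comparable to some diagonal matrix function is in fact comparable to its own associated diagonal $A_{\mathrm{diag}}$, hence
\[
\xi^{\mathrm{tr}} A(x)\xi \approx \sum_{i=1}^n a_{ii}(x)\xi_i^2
\]
uniformly in $x$ and $\xi$. The subordination inequality (\ref{subordinate}) is therefore equivalent to the family of bounds
\[
|\partial_k A(x)\xi|^2 \lesssim \sum_{i=1}^n a_{ii}(x)\xi_i^2, \qquad 1\le k\le M,
\]
holding uniformly in $x$ and $\xi$, and the entire argument reduces to moving between these bounds and the pointwise estimate (\ref{sub cond n}) on $\nabla a_{ij}$.

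For the direction ``$A$ subordinate $\Longrightarrow$ (\ref{sub cond n})'' I would test the equivalent bound against the standard basis vector $\xi = e_j$. The left-hand side then equals $|\partial_k A\cdot e_j|^2 = \sum_i(\partial_k a_{ij})^2$ while the right-hand side collapses to a constant multiple of $a_{jj}$. Extracting the $i$-th summand gives $(\partial_k a_{ij})^2 \lesssim a_{jj}$, and by the symmetry $a_{ij}=a_{ji}$ the same argument with $e_i$ yields $(\partial_k a_{ij})^2 \lesssim a_{ii}$. Summing over $k=1,\dots,M$ produces $|\nabla a_{ij}|^2 \lesssim \min\{a_{ii},a_{jj}\}$.

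For the converse I would start from $|\nabla a_{ij}|^2 \le C\min\{a_{ii},a_{jj}\}$ and expand
\[
|\partial_k A\xi|^2 = \sum_{i=1}^n\Bigl(\sum_{j=1}^n (\partial_k a_{ij})\xi_j\Bigr)^2.
\]
Applying Cauchy--Schwarz to each inner sum and then using $(\partial_k a_{ij})^2 \le |\nabla a_{ij}|^2 \le C a_{jj}$ gives
\[
\Bigl(\sum_j(\partial_k a_{ij})\xi_j\Bigr)^2 \le n\sum_j(\partial_k a_{ij})^2\xi_j^2 \le Cn\sum_j a_{jj}\xi_j^2.
\]
Summing in $i$ picks up another factor of $n$ and produces $|\partial_k A\xi|^2 \lesssim \sum_j a_{jj}\xi_j^2$, which by the equivalence at the start is exactly the subordination inequality. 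The ``in particular'' assertion for $a_{ii}\approx \lambda$ is then an immediate substitution.

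The proof has essentially no deep obstacle: both directions use only the standard-basis test and Cauchy--Schwarz, and the initial equivalence step from Theorem \ref{comp mat} is what makes the passage between the matrix and entrywise formulations clean. The one point requiring a brief word of caution is the possible degeneracy $a_{jj}(x_0)=0$, but this is automatically harmless because $|\nabla a_{ij}|^2\le C a_{jj}$ forces $\nabla a_{ij}(x_0)=0$ at such points, so all the pointwise inequalities continue to hold trivially and no division by $a_{jj}$ ever appears in the Cauchy--Schwarz step.
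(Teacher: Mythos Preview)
Your argument is correct and follows essentially the same approach as the paper: reduce to $A_{\mathrm{diag}}$ via comparability, test on standard basis vectors for necessity, and use Cauchy--Schwarz for sufficiency. In fact your write-up is slightly more careful, since the paper simply asserts that testing on $\xi=e_1,\dots,e_n$ gives conditions that are ``necessary and sufficient'' without spelling out the Cauchy--Schwarz step you provide for the converse direction.
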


\begin{proof}
Let $f^{\prime }$ denote any of the partial derivatives $\frac{\partial f}{%
\partial x_{k}}$. We begin by noting that $A\left( x\right) $ is subordinate
if and only if%
\begin{eqnarray*}
&&\sum_{i=1}^{n}\left\vert \sum_{j=1}^{n}a_{i,j}^{\prime }\left( x\right)
\xi _{j}\right\vert ^{2}=\left\vert \left[ 
\begin{array}{ccc}
a_{11}^{\prime }\left( x\right) & \cdots & a_{1n}^{\prime }\left( x\right)
\\ 
\vdots & \ddots & \vdots \\ 
a_{n1}^{\prime }\left( x\right) & \cdots & a_{nn}^{\prime }\left( x\right)%
\end{array}%
\right] \left( 
\begin{array}{c}
\xi _{1} \\ 
\vdots \\ 
\xi _{3}%
\end{array}%
\right) \right\vert ^{2} \\
&=&\left\vert A^{\prime }\left( x\right) \mathbf{\xi }\right\vert
^{2}\lesssim \mathbf{\xi }^{\func{tr}}A\left( x\right) \mathbf{\xi }\approx 
\mathbf{\xi }^{\func{tr}}\left[ 
\begin{array}{ccc}
a_{11}\left( x\right) & 0 & 0 \\ 
0 & \ddots & 0 \\ 
0 & 0 & a_{nn}\left( x\right)%
\end{array}%
\right] \mathbf{\xi }=\sum_{i=1}^{n}a_{i,i}\left( x\right) \xi _{i}^{2}.
\end{eqnarray*}%
Taking $\mathbf{\xi }=\mathbf{e}_{1},\mathbf{...},\mathbf{e}_{n}$
respectively shows that the following $n$ conditions are necessary and
sufficient;%
\begin{eqnarray*}
\left\vert a_{1,1}^{\prime }\left( x\right) \right\vert ^{2}+\sum_{j:\ j\neq
1}\left\vert a_{1,j}^{\prime }\left( x\right) \right\vert ^{2} &\lesssim
&a_{1,1}\left( x\right) , \\
\left\vert a_{2,2}^{\prime }\left( x\right) \right\vert ^{2}+\sum_{j:\ j\neq
2}\left\vert a_{2,j}^{\prime }\left( x\right) \right\vert ^{2} &\lesssim
&a_{2,2}\left( x\right) , \\
&&\vdots \\
\left\vert a_{n-1,n-1}^{\prime }\left( x\right) \right\vert ^{2}+\sum_{j:\
j\neq n-1}\left\vert a_{n-1,j}^{\prime }\left( x\right) \right\vert ^{2}
&\lesssim &a_{n,n}\left( x\right) , \\
\left\vert a_{n,n}^{\prime }\left( x\right) \right\vert ^{2}+\sum_{j:\ j\neq
n}\left\vert a_{n,j}^{\prime }\left( x\right) \right\vert ^{2} &\lesssim
&a_{n,n}\left( x\right) .
\end{eqnarray*}%
The inequality of Malgrange already shows that the diagonal entries satisfy $%
\left\vert a_{i,i}^{\prime }\left( x\right) \right\vert ^{2}\lesssim
a_{i,i}\left( x\right) $, and since the derivative $\nabla a_{i,j}\left(
x\right) $ of an off diagonal entry occurs in only two of the above lines,
namely in the $i^{th}$ and $j^{th}$ lines, we see that the conditions in the
display above hold if and only if (\ref{sub cond n}) holds.
\end{proof}

\subsection{The sum of squares theorem}

Here is the sum of squares decomposition for a Grushin type matrix function
with a quasiformal block of order $\left( n-p+1\right) \times \left(
n-p+1\right) $, where $1<p\leq n$.

\begin{definition}
Suppose that $\mathbf{A}\left( x\right) $ is a diagonally elliptical $%
n\times n$ matrix function on $\mathbb{R}^{n}$.

\begin{enumerate}
\item We say that $\mathbf{A}\left( x\right) $ is quasiconformal if the
eigenvalues $\lambda _{i}\left( x\right) $ of $\mathbf{A}\left( x\right) $
are nonnegative and comparable.

\item We say that $\mathbf{A}\left( x\right) $ is of Grushin type if $%
\mathbf{A}\left( x\right) $ is singular exactly on a vector subspace $%
\Lambda $ of $\mathbb{R}^{n}$ having dimension $m<n$, and if the diagonal
entries $a_{k,k}\left( x\right) $ of $\mathbf{A}\left( x\right) $ are each
comparable to a function $\lambda _{k}\left( y\right) $ depending only on $%
y\in \Lambda $.
\end{enumerate}
\end{definition}

Roughly speaking, the next theorem says in particular that if a subordinate
diagonally elliptical $C^{4,2\delta }$ matrix function $\mathbf{A}\left(
x\right) $ of Grushin type has diagonal entries $a_{k,k}\left( x\right) $
that are comparable to the last entry $a_{n,n}$ for $p\leq k\leq n$ (but
unrelated to the first $p-1$ diagonal entries), and finally if the
off-diagonal entries satisfy suitable subordinate type inequalities, then $%
\mathbf{A}$ is a finite sum of squares of $C^{2,\delta }$ vector fields plus
a $C^{4,2\delta }$ block matrix function $\left[ 
\begin{array}{cc}
\mathbf{0} & \mathbf{0} \\ 
\mathbf{0} & \mathbf{Q}_{p}%
\end{array}%
\right] $ where the $\left( n-p+1\right) $-square matrix $\mathbf{Q}_{p}$ is
both subordinate and quasiconformal.

\begin{theorem}
\label{final n Grushin}Let 
\begin{equation*}
1<p\leq n,\ \ \ \frac{1}{4}\leq \varepsilon <1,\ \ \ 0<\delta <\delta
^{\prime },\delta ^{\prime \prime }<1,\ \ \ M\geq 1,
\end{equation*}%
with%
\begin{equation*}
\delta ^{\prime }=\frac{2\delta \left( 1+\delta \right) }{2+\delta }.
\end{equation*}%
Suppose that $\mathbf{A}\left( x\right) $ is a $C^{4,2\delta }$ symmetric $%
n\times n$ matrix function of a variable $x\in \mathbb{R}^{M}$, which is
comparable to a diagonal matrix function $\mathbf{D}\left( x\right) $, hence
comparable to its associated diagonal matrix function $\mathbf{A}_{\func{diag%
}}\left( x\right) $.

\begin{enumerate}
\item Moreover, assume $a_{p,p}\left( x\right) \approx a_{p+1,p+1}\left(
x\right) \approx ...\approx a_{n,n}\left( x\right) $ and that the diagonal
entries $a_{1,1}\left( x\right) ,...,a_{p-1,p-1}\left( x\right) $ satisfy
the following differential estimates up to fourth order,%
\begin{eqnarray}
\left\vert D^{\mu }a_{k,k}\left( x\right) \right\vert &\lesssim
&a_{k,k}\left( x\right) ^{\left[ 1-\left\vert \mu \right\vert \varepsilon %
\right] _{+}+\delta ^{\prime }},\ \ \ \ \ \text{ }1\leq \left\vert \mu
\right\vert \leq 4\text{ and }1\leq k\leq p-1,  \label{diag hyp} \\
\left[ a_{k,k}\right] _{\mu ,2\delta }\left( x\right) &\lesssim &1,\ \ \ \ \ 
\text{ }\left\vert \mu \right\vert =4\text{ and }1\leq k\leq p-1.  \notag
\end{eqnarray}

\item Furthermore, assume the off diagonal entries $a_{k,j}\left( x\right) $
satisfy the following differential estimates up to fourth order, 
\begin{eqnarray}
\left\vert D^{\mu }a_{k,j}\right\vert &\lesssim &\left( \min_{1\leq s\leq
j}a_{s,s}\right) ^{\left[ \frac{1}{2}+\left( 2-\left\vert \mu \right\vert
\right) \varepsilon \right] _{+}+\delta ^{\prime \prime }},\ \ \ \ \ 0\leq
\left\vert \mu \right\vert \leq 4\text{ and }1\leq k<j\leq p-1,
\label{off diag hyp} \\
\left[ a_{k,j}\right] _{\mu ,2\delta } &\lesssim &1,\ \ \ \ \ \left\vert \mu
\right\vert =4\text{ and }1\leq k<j\leq p-1,  \notag \\
\left\vert D^{\mu }a_{k,j}\right\vert &\lesssim &\left( \min_{1\leq s\leq
k}a_{s,s}\right) ^{\left[ \frac{1}{2}+\left( 2-\left\vert \mu \right\vert
\right) \varepsilon \right] _{+}+\delta ^{\prime \prime }},\ \ \ \ \ 0\leq
\left\vert \mu \right\vert \leq 4\text{ and }1\leq k\leq p-1<j\leq n  \notag
\\
\left[ a_{k,j}\right] _{\mu ,2\delta } &\lesssim &1,\ \ \ \ \ \left\vert \mu
\right\vert =4\text{ and }1\leq k\leq p-1<j\leq n.  \notag
\end{eqnarray}

\item Then there is a positive integer $I\in \mathbb{N}$ such that the
matrix function $\mathbf{A}$ can be written as a finite sum of squares of $%
C^{2,\delta }$ vectors $X_{k,j}$, plus a matrix function $\mathbf{A}_{p}$, 
\begin{equation*}
\mathbf{A}\left( x\right) =\sum_{k=1}^{p-1}\sum_{i=1}^{I}X_{k,j}\left(
x\right) X_{k,j}\left( x\right) ^{\func{tr}}+\mathbf{A}_{p}\left( x\right)
,\ \ \ \ \ x\in \mathbb{R}^{M},
\end{equation*}%
where the vectors $X_{k,i}\left( x\right) ,\ 1\leq k\leq p-1,\ 1\leq i\leq I$
are $C^{2,\delta }\left( \mathbb{R}^{M}\right) $, $\mathbf{A}_{p}\left(
x\right) =\left[ 
\begin{array}{cc}
\mathbf{0} & \mathbf{0} \\ 
\mathbf{0} & \mathbf{Q}_{p}\left( x\right)%
\end{array}%
\right] $, and $\mathbf{Q}_{p}\left( x\right) \in C^{4,2\delta }\left( 
\mathbb{R}^{M}\right) $ is \emph{quasiconformal}. Moreover, $Z_{k}\equiv
\sum_{i=1}^{I}X_{k,i}X_{k,i}^{\func{tr}}\in C^{4,2\delta }\left( \mathbb{R}%
^{M}\right) $ and 
\begin{eqnarray}
ca_{k,k}\mathbf{e}_{k}\otimes \mathbf{e}_{k} &\prec &Z_{k}Z_{k}^{\func{tr}%
}+\sum_{m=k+1}^{n}a_{m,m}\mathbf{e}_{m}\otimes \mathbf{e}_{m}\prec
C\sum_{m=k}^{n}a_{m,m}\mathbf{e}_{m}\otimes \mathbf{e}_{m},\ \ \ \ \ 1\leq
k\leq p-1,  \label{more} \\
\mathbf{Q}_{p}\left( x\right) &\sim &a_{p,p}\left( x\right) \mathbb{I}%
_{n-p+1}\ .  \notag
\end{eqnarray}%
Finally, if in addition $A\left( x\right) $ is subordinate, then $\mathbf{Q}%
_{p}\left( x\right) $ is also subordinate.
\end{enumerate}
\end{theorem}

\begin{remark}
If in addition $a_{k,k}\left( x\right) \approx 1$ for $1\leq k\leq m<p$,
then the conditions (\ref{diag hyp}) and (\ref{off diag hyp}) in $\left(
1\right) $ and $\left( 2\right) $ are vacuous for $1\leq k\leq m$, and
moreover the proof shows that the vectors $X_{k,i}$ are actually in $%
C^{4,2\delta }\left( \mathbb{R}^{M}\right) $ for $1\leq k\leq m,1\leq i\leq
I $.
\end{remark}

This remark yields the following corollary in which conditions (\ref{diag
hyp}) and (\ref{off diag hyp}) in $\left( 1\right) $ and $\left( 2\right) $
play no role.

\begin{corollary}
Suppose $\mathbf{A}\left( x\right) $ is a $C^{4,2\delta }\left( \mathbb{R}%
^{M}\right) $ symmetric $n\times n$ matrix function that is comparable to a
diagonal matrix function. In addition suppose that $a_{k,k}\left( x\right)
\approx 1$ for $1\leq k\leq p-1$ and $a_{k,k}\left( x\right) \approx
a_{p,p}\left( x\right) $ for $p\leq k\leq n$. Then%
\begin{equation*}
\mathbf{A}\left( x\right) =\sum_{k=1}^{p-1}X_{k}\left( x\right) X_{k}\left(
x\right) ^{\func{tr}}+\mathbf{Q}_{p}\left( x\right) ,\ \ \ \ \ x\in \mathbb{R%
}^{M},
\end{equation*}%
where $X_{k},\mathbf{Q}_{p}\in C^{4,2\delta }\left( \mathbb{R}^{M}\right) $
and (\ref{more}) holds for $1\leq k\leq p-1$.
\end{corollary}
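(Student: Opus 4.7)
My plan is to deduce this corollary directly from Theorem \ref{final n Grushin} together with the Remark immediately preceding it; no genuinely new analysis is required. The work is entirely in checking that the hypothesis $a_{k,k}(x) \approx 1$ for $1 \leq k \leq p-1$ reduces the technical differential inequalities of the theorem to automatic consequences of the $C^{4,2\delta}$ regularity of $\mathbf{A}$.

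First I would verify the diagonal bounds (\ref{diag hyp}): for $1 \leq k \leq p-1$, the right-hand side $a_{k,k}^{\left[1-|\mu|\varepsilon\right]_+ + \delta'}$ is comparable to $1$, so (\ref{diag hyp}) reduces to the uniform bounds $|D^\mu a_{k,k}| \lesssim 1$ and $[a_{k,k}]_{\mu,2\delta} \lesssim 1$, both of which hold by the global $C^{4,2\delta}$ hypothesis. Next, for the off-diagonal bounds (\ref{off diag hyp}) in the range $1 \leq k < j \leq p-1$, we have $\min_{1 \leq s \leq j} a_{s,s} \approx 1$, so the right-hand sides are again $\approx 1$ and the inequalities follow at once from $C^{4,2\delta}$ regularity; the remaining range $1 \leq k \leq p-1 < j \leq n$ is handled identically using $\min_{1 \leq s \leq k} a_{s,s} \approx 1$.

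With the hypotheses verified, I would apply Theorem \ref{final n Grushin} to obtain vector fields $X_{k,i} \in C^{2,\delta}$ with
\[
\mathbf{A}(x) = \sum_{k=1}^{p-1} \sum_{i=1}^{I} X_{k,i}(x) X_{k,i}(x)^{\func{tr}} + \mathbf{A}_p(x),
\]
where $\mathbf{A}_p = \left[ \begin{array}{cc} \mathbf{0} & \mathbf{0} \\ \mathbf{0} & \mathbf{Q}_p \end{array} \right]$, $\mathbf{Q}_p \in C^{4,2\delta}$ is quasiconformal, and the chain (\ref{more}) holds for $1 \leq k \leq p-1$. Then the Remark, applied with $m = p-1 < p$, upgrades the regularity of each $X_{k,i}$ from $C^{2,\delta}$ to $C^{4,2\delta}$. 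To recover the stated single-vector form, I collect the columns $X_{k,1}(x), \ldots, X_{k,I}(x)$ into a single $n \times I$ matrix-valued function $X_k(x) \in C^{4,2\delta}$; then $X_k X_k^{\func{tr}} = \sum_i X_{k,i} X_{k,i}^{\func{tr}}$, which delivers the decomposition in the form stated.

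Since all the real content is already contained in Theorem \ref{final n Grushin} and its Remark, there is no serious obstacle here; the corollary simply records the clean consequence one obtains when the first $p-1$ diagonal entries are uniformly bounded above and below.
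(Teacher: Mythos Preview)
Your proposal is correct and follows essentially the same route as the paper, which derives the corollary directly from the Remark preceding it: the hypothesis $a_{k,k}\approx 1$ for $1\le k\le p-1$ renders (\ref{diag hyp}) and (\ref{off diag hyp}) vacuous and upgrades the regularity of the square factors to $C^{4,2\delta}$. One cosmetic remark: since $E_k\approx 1$ here, the scalar sum-of-squares step in the proof of Theorem~\ref{final n Grushin} can be bypassed with $I=1$ and $X_k=Z_k$ a genuine vector in $C^{4,2\delta}$, so collecting columns into an $n\times I$ matrix, while valid, is not needed.
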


\begin{remark}
If the diagonal entry $a_{k,k}\left( x\right) $ is smooth and $\omega _{s}$%
-montone on $\mathbb{R}^{n}$ for some $s>1-\varepsilon $, then the diagonal
differential estimates (\ref{diag hyp}) above hold for $a_{k,k}\left(
x\right) $ since by \cite[Theorem 18]{KoSa1} we have $\left\vert D^{\mu
}a_{k,k}\left( x\right) \right\vert \leq C_{s,s^{\prime }}a_{k,k}\left(
x\right) ^{\left( s^{\prime }\right) ^{\left\vert \mu \right\vert }}$ for
any $0<s^{\prime }<s$. Indeed, we then have 
\begin{equation*}
a_{k,k}\left( x\right) ^{\left( s^{\prime }\right) ^{\left\vert \mu
\right\vert }}\lesssim a_{k,k}\left( x\right) ^{\left[ 1-\left\vert \mu
\right\vert \varepsilon \right] _{+}+\delta ^{\prime }},
\end{equation*}%
since $\left( s^{\prime }\right) ^{\left\vert \mu \right\vert }-\left[
1-\left\vert \mu \right\vert \varepsilon \right] _{+}>0$ for $1-\varepsilon
<s^{\prime }<s$, which in turn follows from the fact that $\left(
1-\varepsilon \right) ^{m}+m\varepsilon $ has a strict minimum at $%
\varepsilon =0$.
\end{remark}

\begin{remark}
As $\varepsilon $ increases from $\frac{1}{4}$ to $1$, the diagonal
assumptions (\ref{diag hyp}) become more relaxed, while the off diagonal
assumptions (\ref{off diag hyp}) become more stringent. Thus there is a
tradeoff between assuming more on the diagonal entries or more on the off
diagonal entries.
\end{remark}

\begin{remark}
The positive number $\delta ^{\prime }$ in Theorem \ref{final n Grushin} was
chosen in order to use Theorem \ref{efs eps} at various critical points in
the proof.
\end{remark}

\begin{remark}
Examples in the final section show that we cannot lower the exponent $\left[ 
\frac{1}{2}+\left( 2-\left\vert \mu \right\vert \right) \varepsilon \right]
_{+}$ on the right hand side of (\ref{off diag hyp}).
\end{remark}

\begin{remark}
If in Theorem \ref{final n Grushin}, we drop the hypothesis (\ref{diag hyp})
that the diagonal entries satisfy the differential estimates, and even
slightly weaken the off diagonal hypotheses (\ref{off diag hyp}), then using
the Fefferman-Phong theorem for sums of squares of scalar functions, the
proof of Theorem \ref{final n Grushin} shows that the operator $L=\nabla ^{%
\limfunc{tr}}\mathbf{A}\nabla $ can be written as $L=\sum_{j=1}^{N}X_{j}^{%
\func{tr}}X_{j}$ where the vector fields $X_{j}$ are $C^{1,1}$ for$\
j=1,2,...,N$. However, unlike the situation for scalar functions, the
examples in Theorems \ref{C 1 delta} and \ref{flat counter}\ show that we 
\emph{cannot} dispense with the off diagonal hypotheses (\ref{off diag hyp})
in $\left( 1\right) \left( b\right) $. Moreover, the space $C^{1,1}$ seems
to not be sufficient for gaining a positive degree $\delta $ of smoothness
for solutions to the second order operators we consider, and so this result
will neither be used nor proved here.
\end{remark}

\section{Square Decompositions}

Suppose $\mathbf{A}=\left[ a_{k,j}\right] _{k,j=1}^{n}$ is a symmetric $%
n\times n$ matrix with top left entry $a_{11}>0$. Then we can uniquely
decompose $\mathbf{A}$ into a sum of a positive dyad $ZZ^{\func{tr}}$ and a
matrix $\mathbf{B}$ with zeroes in the first column and first row, namely%
\begin{eqnarray}
\mathbf{A} &\mathbf{=}&ZZ^{\func{tr}}+\mathbf{B}=\left[ \left(
\sum_{j=1}^{n}s_{j}\mathbf{e}_{j}\right) \otimes \left( \sum_{j=1}^{n}s_{j}%
\mathbf{e}_{j}\right) \right] +\mathbf{B},  \label{can dec} \\
\mathbf{B} &=&\left[ 
\begin{array}{cc}
0 & \mathbf{0}_{1\times \left( n-1\right) } \\ 
\mathbf{0}_{\left( n-1\right) \times 1} & \mathbf{Q}%
\end{array}%
\right] ,  \notag
\end{eqnarray}%
where $Z$ is $\frac{1}{\sqrt{a_{11}}}$ times the first column of $\mathbf{A}$%
, i.e. for $1\leq k\leq j\leq n$,%
\begin{eqnarray*}
a_{11} &=&\left( s_{1}\right) ^{2}\text{ and }a_{1j}=s_{1}s_{j}, \\
s_{1} &=&\sqrt{a_{11}}\text{ and }s_{j}=\frac{a_{1j}}{s_{1}}, \\
\mathbf{Q} &=&\left[ q_{k,j}\right] _{j,k=2}^{n}=\left[ a_{k,j}-s_{k}s_{j}%
\right] _{j,k=2}^{n}=\left[ a_{k,j}-\frac{a_{1k}a_{1j}}{a_{11}}\right]
_{j,k=2}^{n}.
\end{eqnarray*}%
As this particular decomposition $\mathbf{A}\mathbf{=}ZZ^{\func{tr}}+\mathbf{%
B}$ will play a pivotal role in our inductive proof of Theorem \ref{final n
Grushin}, we give it a name.

\begin{definition}
The above decomposition $\mathbf{A}\mathbf{=}ZZ^{\func{tr}}+\mathbf{B}$ is
called the $1$\emph{-Square Decomposition} of $A$.
\end{definition}

Note that we are writing the $\left( n-1\right) \times \left( n-1\right) $
matrix $\mathbf{Q}$ as $\left[ q_{k,j}\right] _{k,j=2}^{n}$, where the rows
of $\mathbf{Q}$ are parameterized by $k$ and the columns by $j$ with $2\leq
k,j\leq n$.

Another main ingredient in our inductive proof is the well-known
characterization of positive definiteness given in Theorem \ref{well known}
above. We will also need the well-known determinant formula given in Lemma %
\ref{deter} above,%
\begin{equation*}
\det \left[ 
\begin{array}{cc}
\alpha & v^{\limfunc{tr}} \\ 
v & \mathbf{M}%
\end{array}%
\right] =\alpha \det \mathbf{M}-v^{\limfunc{tr}}\limfunc{co}\mathbf{M}%
v=\left\{ \alpha -v^{\limfunc{tr}}\mathbf{M}^{-1}v\right\} \det \mathbf{M\ .}
\end{equation*}

We have already introduced the concepts of diagonally elliptical and
subordinate matrix functions above, and we now introduce one more concept
relevant to our decomposition, especially to the regularity of the vectors
in the sum of squares. More precisely, we will need to show that the
hypotheses of Theorem \ref{final n Grushin} propogate through the $1$-Square
Decomposition in an appropriate sense. The following definition encodes what
is required.

\begin{definition}
\label{def l strongly}Suppose $\mathbf{A}\left( x\right) \in C^{4,2\delta
}\left( \mathbb{R}^{M}\right) $ is an $n\times n$ matrix function, $1\leq
\ell \leq n$, $\frac{1}{4}\leq \varepsilon <1$ and $0<\delta ^{\prime
},\delta ^{\prime \prime }<1$. Set%
\begin{equation*}
m_{k}\left( x\right) \equiv \min_{1\leq s\leq k}\left\{ a_{s,s}\left(
x\right) \right\} ,\ \ \ \ \ 1\leq k\leq n.
\end{equation*}%
We say $\mathbf{A}\left( x\right) $ is $\left( \ell ,\varepsilon ,\delta
^{\prime },\delta ^{\prime \prime }\right) $\emph{-strongly }$C^{4,2\delta }$
if 
\begin{eqnarray}
\left\vert D^{\mu }a_{k,k}\right\vert &\lesssim &\left\vert
a_{k,k}\right\vert ^{\left[ 1-\left\vert \mu \right\vert \varepsilon \right]
_{+}+\delta ^{\prime }},\ \ \ \ \ 1\leq \left\vert \mu \right\vert \leq 4%
\text{ and }1\leq k\leq \ell ,  \label{akj hyp} \\
\left[ a_{k,k}\right] _{\mu ,2\delta } &\lesssim &1,\ \ \ \ \ \left\vert \mu
\right\vert =4\text{ and }1\leq k\leq \ell ,  \notag \\
\left\vert D^{\mu }a_{k,j}\right\vert &\lesssim &\left( m_{j}\right) ^{ 
\left[ \frac{1}{2}+\left( 2-\left\vert \mu \right\vert \right) \varepsilon %
\right] _{+}+\delta ^{\prime \prime }},\ \ \ \ \ 0\leq \left\vert \mu
\right\vert \leq 4\text{ and }1\leq k<j\leq \ell ,  \notag \\
\left[ a_{k,j}\right] _{\mu ,2\delta } &\lesssim &1,\ \ \ \ \ \left\vert \mu
\right\vert =4\text{ and }1\leq k<j\leq \ell ,  \notag \\
\left\vert D^{\mu }a_{k,j}\right\vert &\lesssim &\left( m_{k}\right) ^{ 
\left[ \frac{1}{2}+\left( 2-\left\vert \mu \right\vert \right) \varepsilon %
\right] _{+}+\delta ^{\prime \prime }},\ \ \ \ \ 0\leq \left\vert \mu
\right\vert \leq 4\text{ and }1\leq k\leq \ell <j\leq n,  \notag \\
\left[ a_{k,j}\right] _{\mu ,2\delta } &\lesssim &1,\ \ \ \ \ \left\vert \mu
\right\vert =4\text{ and }1\leq k\leq \ell <j\leq n.  \notag
\end{eqnarray}
\end{definition}

\begin{remark}
When using this definition in the course of proving Theorem \ref{final n
Grushin}, the assumption that $\delta ^{\prime }>0$ in the first line of (%
\ref{akj hyp}) will be essential in order to apply our result on sums of
squares of $C^{2,\delta }$ functions. The choices $\delta ^{\prime }=\frac{%
2\delta \left( 1+\delta \right) }{2+\delta }$ and $\delta ^{\prime \prime
}>\delta $ made in Theorem \ref{final n Grushin} will be required to show
that the \emph{matrices} being squared are $C^{2,\delta }$.
\end{remark}

In the scalar case $n=1$, the requirements (\ref{akj hyp}) on the scalar
function $a_{1,1}$ are equivalent to the inequalities 
\begin{equation*}
\left\vert D^{\mu }a_{1,1}\right\vert \lesssim \left\vert a_{1,1}\right\vert
^{\delta ^{\prime }},\ \ \ \ \ \left\vert \mu \right\vert =2,4,
\end{equation*}%
by the control of odd derivatives by even derivatives, see e.g. \cite{Tat}
and \cite[Lemma 24]{KoSa1}. In turn, these latter inequalities hold for some 
$\delta ^{\prime }>0$ if and only if both $\left\vert \nabla
^{4}a_{1,1}\right\vert \leq Ca_{1,1}^{\frac{\delta }{2+\delta }}$ and $%
\left\vert \nabla ^{2}a_{1,1}\right\vert \leq Ca_{1,1}^{\frac{2\delta \left(
1+\delta \right) }{2+\delta }}$ hold for some $\delta >0$, and then Theorem %
\ref{efs eps} applies to show that $a_{1,1}$ is a finite sum of squares of $%
C^{2,\delta }$ functions.

We will now show in the next three lemmas that the properties of being
diagonally elliptical, subordinate, and $\left( \ell ,\varepsilon ,\delta
^{\prime },\delta ^{\prime \prime }\right) $-strongly\emph{\ }$C^{4,2\delta }
$, are inherited by the $\left( n-1\right) \times \left( n-1\right) $ matrix 
$\mathbf{Q}\left( x\right) $ in the $1$-Square Decomposition of $\mathbf{A}%
\left( x\right) $, with the proviso that $\ell $ is decreased by $1$ in the
definition of strongly $C^{4,2\delta }$.

\subsection{The three lemmas}

\begin{lemma}
\label{diag ellip}Suppose that $\mathbf{A}\left( x\right) =\left[
a_{k,j}\left( x\right) \right] _{k,j=1}^{n}=\left[ 
\begin{array}{cc}
a_{11}\left( x\right) & b\left( x\right) ^{\func{tr}} \\ 
b\left( x\right) & \mathbf{D}\left( x\right)%
\end{array}%
\right] $ is a diagonally elliptical $n\times n$ matrix function with $1$%
-Square Decomposition 
\begin{equation*}
\mathbf{A}\left( x\right) \mathbf{=}Z\left( x\right) Z\left( x\right) ^{%
\func{tr}}+\mathbf{B}\left( x\right) ,\ \ \ \ \ \text{where }\mathbf{B}%
\left( x\right) =\left[ 
\begin{array}{cc}
0 & 0^{\func{tr}} \\ 
0 & \mathbf{Q}\left( x\right)%
\end{array}%
\right] .
\end{equation*}%
Then $\mathbf{Q}\left( x\right) =\left[ q_{k,j}\left( x\right) \right]
_{k,j=1}^{n}$ is a diagonally elliptical $\left( n-1\right) \times \left(
n-1\right) $ matrix function, and moreover%
\begin{equation}
q_{i,i}\left( x\right) \approx a_{i,i}\left( x\right) ,\ \ \ \ \ 2\leq i\leq
n,  \label{q-a}
\end{equation}%
and%
\begin{eqnarray}
Z_{1}\left( x\right) Z_{1}\left( x\right) ^{\func{tr}} &\prec &C\mathbf{A}%
\left( x\right) ,  \label{Z1 comp} \\
ca_{1,1}\mathbf{e}_{1}\otimes \mathbf{e}_{1} &\prec &Z_{1}Z_{1}^{\func{tr}%
}+\sum_{k=2}^{n}a_{k,k}\mathbf{e}_{k}\otimes \mathbf{e}_{k}\prec C\mathbf{A}.
\notag
\end{eqnarray}
\end{lemma}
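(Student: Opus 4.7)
The plan is to recognize $\mathbf{Q}(x)$ as the Schur complement of $a_{11}(x)$ in $\mathbf{A}(x)$, namely
\[
\mathbf{Q}(x) \;=\; \mathbf{D}(x) - \tfrac{1}{a_{11}(x)}\, b(x)\,b(x)^{\func{tr}},
\]
and then to propagate diagonal ellipticity from $\mathbf{A}$ to $\mathbf{Q}$ by feeding the hypothesis into Theorem~\ref{comp mat}. The analytic workhorse will be the following uniform matrix inequality: Theorem~\ref{comp mat} supplies a constant $\gamma \in (0,1)$, independent of $x$, with $b^{\func{tr}}\mathbf{D}^{-1}b \le \gamma^{2} a_{11}$. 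Because diagonal ellipticity is inherited by principal submatrices, $\mathbf{D} \sim \mathbf{D}_{\func{diag}}$, and inverting this comparability gives an absolute constant $\tilde{\gamma}>0$ with $b^{\func{tr}}\mathbf{D}_{\func{diag}}^{-1}b \le \tilde{\gamma}^{2}a_{11}$. Setting $b=\mathbf{D}_{\func{diag}}^{1/2}c$ and using $c c^{\func{tr}} \preccurlyeq |c|^{2}\mathbb{I}$ then yields
\[
\tfrac{1}{a_{11}}\,bb^{\func{tr}} \;\preccurlyeq\; \tilde{\gamma}^{2}\,\mathbf{D}_{\func{diag}}.
\]

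From this the scalar case $|a_{1,i}|^{2}/a_{11}\le \gamma^{2}a_{i,i}$ of Theorem~\ref{comp mat} delivers $(1-\gamma^{2})a_{i,i} \le q_{i,i}\le a_{i,i}$, which is exactly \eqref{q-a} and also gives $\mathbf{Q}_{\func{diag}}\sim \mathbf{D}_{\func{diag}}$. The matrix workhorse controls $\mathbf{Q}$ itself: on one side $\mathbf{Q} \preccurlyeq \mathbf{D}$, and on the other $\mathbf{Q} \succcurlyeq \mathbf{D}-\tilde{\gamma}^{2}\mathbf{D}_{\func{diag}}$; combined with $\mathbf{D}\sim \mathbf{D}_{\func{diag}}$ this gives $\mathbf{Q}\sim \mathbf{D}_{\func{diag}}\sim \mathbf{Q}_{\func{diag}}$, so $\mathbf{Q}$ is comparable to a diagonal matrix function. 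Positive definiteness of $\mathbf{Q}$ away from the origin is then immediate, because at any such point the diagonal of $\mathbf{A}_{\func{diag}}$ is strictly positive (otherwise comparability would make $\mathbf{A}$ singular there), hence each $q_{i,i}\approx a_{i,i}>0$. Thus $\mathbf{Q}$ is diagonally elliptical.

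For \eqref{Z1 comp}, the bound $Z_{1}Z_{1}^{\func{tr}}\preccurlyeq \mathbf{A}$ is immediate from $\mathbf{A}-Z_{1}Z_{1}^{\func{tr}}=\mathbf{B}\succcurlyeq 0$. The upper bound in the nested inequality is routine: $Z_{1}Z_{1}^{\func{tr}}\preccurlyeq \mathbf{A}$ and $\sum_{k\ge 2} a_{k,k}\mathbf{e}_{k}\otimes \mathbf{e}_{k} \preccurlyeq \mathbf{A}_{\func{diag}} \preccurlyeq \beta^{-1}\mathbf{A}$. The lower bound is the one computational step: apply Lemma~\ref{lower bound} to the block form of
\[
Z_{1}Z_{1}^{\func{tr}} \;+\; \sum_{k=2}^{n} a_{k,k}\,\mathbf{e}_{k}\otimes \mathbf{e}_{k} \;-\; c\,a_{1,1}\,\mathbf{e}_{1}\otimes \mathbf{e}_{1}
\]
with $\alpha=c$, $H=a_{11}$, $h^{2}=a_{11}$, $v=b$, $\mathbf{F}=\tfrac{1}{a_{11}}bb^{\func{tr}}+\mathbf{D}_{\func{diag}}$ and $\mathbf{f}=\mathbf{0}$. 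The resulting Schur-complement test reduces to $\mathbf{D}_{\func{diag}} - \tfrac{c}{(1-c)a_{11}}\,bb^{\func{tr}}\succcurlyeq 0$, which, by the workhorse inequality, holds for any $c$ with $\tfrac{c}{1-c}\tilde{\gamma}^{2}\le 1$.

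The main obstacle is the first paragraph: passing from the matrix inequality of Theorem~\ref{comp mat} (phrased against $\mathbf{D}$) to the matrix inequality $\tfrac{1}{a_{11}}bb^{\func{tr}}\preccurlyeq \tilde{\gamma}^{2}\mathbf{D}_{\func{diag}}$, which forces us to quantify the hidden comparability constants uniformly in $x$. Once that is in hand, everything else is Schur-complement bookkeeping and exploitation of $\mathbf{A}\sim \mathbf{A}_{\func{diag}}$.
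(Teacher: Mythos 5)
There is a genuine gap in your first paragraph, and it is precisely the "main obstacle" you flagged but did not resolve. You pass from $b^{\func{tr}}\mathbf{D}^{-1}b\le\gamma^{2}a_{11}$ (with $\gamma<1$) to $b^{\func{tr}}\mathbf{D}_{\func{diag}}^{-1}b\le\tilde\gamma^{2}a_{11}$ and then set $b=\mathbf{D}_{\func{diag}}^{1/2}c$ to get $\tfrac{1}{a_{11}}bb^{\func{tr}}\preccurlyeq\tilde\gamma^{2}\mathbf{D}_{\func{diag}}$; all correct. But you then assert that $\mathbf{Q}=\mathbf{D}-\tfrac{1}{a_{11}}bb^{\func{tr}}\succcurlyeq\mathbf{D}-\tilde\gamma^{2}\mathbf{D}_{\func{diag}}$ together with $\mathbf{D}\sim\mathbf{D}_{\func{diag}}$ gives $\mathbf{Q}\sim\mathbf{D}_{\func{diag}}$. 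That needs a \emph{positive} lower bound, i.e. $\mathbf{D}-\tilde\gamma^{2}\mathbf{D}_{\func{diag}}\succcurlyeq c\,\mathbf{D}_{\func{diag}}$ for some $c>0$, which requires the lower comparability constant $\beta'$ in $\beta'\mathbf{D}_{\func{diag}}\preccurlyeq\mathbf{D}$ to exceed $\tilde\gamma^{2}$. Nothing guarantees this: inverting the comparability gives roughly $\tilde\gamma^{2}\approx\gamma^{2}/\beta'$, so you would need $(\beta')^{2}>\gamma^{2}$, which is not implied by the hypotheses (take, say, $\mathbf{D}$ with large off-diagonal entries so $\beta'$ is close to $0$ while $\gamma$ is close to $1$). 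The detour through $\mathbf{D}_{\func{diag}}$ destroys the sharp bound $\gamma<1$ exactly where you need it.

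The fix is to run the same squaring trick against $\mathbf{D}$ rather than $\mathbf{D}_{\func{diag}}$, which is what the paper does. Writing $c=\tfrac{1}{\sqrt{a_{11}}}\mathbf{D}^{-1/2}b$, the hypothesis $b^{\func{tr}}\mathbf{D}^{-1}b\le\gamma^{2}a_{11}$ says $|c|\le\gamma<1$, so $\tfrac{1}{a_{11}}bb^{\func{tr}}=\mathbf{D}^{1/2}cc^{\func{tr}}\mathbf{D}^{1/2}\preccurlyeq\gamma^{2}\mathbf{D}$ and hence $\mathbf{Q}\succcurlyeq(1-\gamma^{2})\mathbf{D}$ with $1-\gamma^{2}>0$. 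Combined with the trivial $\mathbf{Q}\preccurlyeq\mathbf{D}$ this gives $\mathbf{Q}\sim\mathbf{D}\sim\mathbf{D}_{\func{diag}}\sim\mathbf{Q}_{\func{diag}}$, and only at this last step do you convert to the diagonal and lose constants harmlessly. The remainder of your argument, including the scalar case for \eqref{q-a}, your clean observation that $Z_{1}Z_{1}^{\func{tr}}\preccurlyeq\mathbf{A}$ follows from $\mathbf{B}\succcurlyeq0$, and the Schur-complement application of Lemma~\ref{lower bound} for the lower bound in \eqref{Z1 comp}, is sound; note that for that last step your weaker workhorse bound with an arbitrary finite $\tilde\gamma$ is indeed enough, since there you only need $\tfrac{c}{1-c}\tilde\gamma^{2}\le1$ for small $c$. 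The paper's proof of the lower bound is instead a direct verification that $\sum_{k}a_{k,k}\mathbf{e}_{k}\otimes\mathbf{e}_{k}=\mathbf{A}_{\func{diag}}\sim\mathbf{A}=Z_{1}Z_{1}^{\func{tr}}+\mathbf{B}\sim Z_{1}Z_{1}^{\func{tr}}+\sum_{k\ge2}a_{k,k}\mathbf{e}_{k}\otimes\mathbf{e}_{k}$, using $q_{k,k}\approx a_{k,k}$; your Lemma~\ref{lower bound} route is equivalent but slightly longer.
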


\begin{proof}
Note that 
\begin{equation*}
Z=\left( 
\begin{array}{c}
\sqrt{a_{1,1}} \\ 
\frac{1}{\sqrt{a_{1,1}}}a_{1,2} \\ 
\vdots \\ 
\frac{1}{\sqrt{a_{1,1}}}a_{1,n}%
\end{array}%
\right) =\left( 
\begin{array}{c}
\sqrt{a_{1,1}} \\ 
\frac{b_{2}}{\sqrt{a_{1,1}}} \\ 
\vdots \\ 
\frac{b_{n}}{\sqrt{a_{1,1}}}%
\end{array}%
\right) =\left( 
\begin{array}{c}
\sqrt{a_{1,1}} \\ 
\frac{b}{\sqrt{a_{1,1}}}%
\end{array}%
\right) .
\end{equation*}%
Since $\mathbf{A}\left( x\right) $ is comparable to $\mathbf{A}_{\func{diag}%
}\left( x\right) $, say%
\begin{equation*}
\beta _{0}\mathbf{A}_{\func{diag}}\preccurlyeq \mathbf{A}\preccurlyeq \alpha
_{0}\mathbf{A}_{\func{diag}},
\end{equation*}%
we also have%
\begin{equation}
\beta _{0}\mathbf{D}_{\func{diag}}\preccurlyeq \mathbf{D}\preccurlyeq \alpha
_{0}\mathbf{D}_{\func{diag}}  \label{also have}
\end{equation}%
and Theorem \ref{comp mat} now shows that for $0<\beta \leq \beta _{0}$ we
have%
\begin{equation*}
b^{\func{tr}}\mathbf{D}_{\beta }b\leq \left( 1-\beta \right) a_{11}\ ,
\end{equation*}%
where $\mathbf{D}_{\beta }\equiv \left[ \mathbf{D}-\beta \mathbf{D}_{\func{%
diag}}\right] ^{-1}$.

Since $\mathbf{D}_{\beta }\left( x\right) $ is positive semidefinite, it has
a positive semidefinite square root $\mathbf{D}_{\beta }\left( x\right) ^{%
\frac{1}{2}}$, and so we have%
\begin{equation*}
\left( \mathbf{D}_{\beta }\left( x\right) ^{\frac{1}{2}}b\left( x\right)
\right) ^{\func{tr}}\mathbf{D}_{\beta }\left( x\right) ^{\frac{1}{2}}b\left(
x\right) =b\left( x\right) ^{\func{tr}}\mathbf{D}_{\beta }\left( x\right)
b\left( x\right) \leq \left( 1-\beta \right) a_{11}\left( x\right) \ ,
\end{equation*}%
which implies that%
\begin{equation*}
\left\vert \mathbf{D}_{\beta }\left( x\right) ^{\frac{1}{2}}b\left( x\right)
\right\vert \leq \sqrt{1-\beta }\sqrt{a_{11}\left( x\right) }.
\end{equation*}

But then we have that%
\begin{eqnarray*}
&&\mathbf{D}_{\beta }\left( x\right) ^{\frac{1}{2}}\mathbf{Q}\left( x\right) 
\mathbf{D}_{\beta }\left( x\right) ^{\frac{1}{2}}=\mathbf{D}_{\beta }\left(
x\right) ^{\frac{1}{2}}\left[ \mathbf{D}\left( x\right) -\frac{1}{%
a_{11}\left( x\right) }b\left( x\right) b\left( x\right) ^{\func{tr}}\right] 
\mathbf{D}_{\beta }\left( x\right) ^{\frac{1}{2}} \\
&=&\mathbf{D}_{\beta }\left( x\right) ^{\frac{1}{2}}\mathbf{D}\left(
x\right) \mathbf{D}_{\beta }\left( x\right) ^{\frac{1}{2}}-\left( \frac{1}{%
\sqrt{a_{11}\left( x\right) }}\mathbf{D}_{\beta }\left( x\right) ^{\frac{1}{2%
}}b\left( x\right) \right) \left( \frac{1}{\sqrt{a_{11}\left( x\right) }}%
\mathbf{D}_{\beta }\left( x\right) ^{\frac{1}{2}}b\left( x\right) \right) ^{%
\func{tr}}
\end{eqnarray*}%
where%
\begin{eqnarray*}
\mathbf{D}_{\beta }\left( x\right) ^{\frac{1}{2}}\mathbf{D}\left( x\right) 
\mathbf{D}_{\beta }\left( x\right) ^{\frac{1}{2}} &=&\mathbf{D}_{\beta
}\left( x\right) ^{\frac{1}{2}}\left[ \mathbf{D}_{\beta }\left( x\right)
^{-1}+\beta \mathbf{D}_{\func{diag}}\left( x\right) \right] \mathbf{D}%
_{\beta }\left( x\right) ^{\frac{1}{2}} \\
&=&\mathbb{I}_{n-1}+\beta \mathbf{D}_{\beta }\left( x\right) ^{\frac{1}{2}}%
\mathbf{D}_{\func{diag}}\left( x\right) \mathbf{D}_{\beta }\left( x\right) ^{%
\frac{1}{2}},
\end{eqnarray*}%
and hence%
\begin{eqnarray*}
\mathbf{D}_{\beta }\left( x\right) ^{\frac{1}{2}}\mathbf{Q}\left( x\right) 
\mathbf{D}_{\beta }\left( x\right) ^{\frac{1}{2}} &=&\mathbb{I}_{n-1}+\beta 
\mathbf{D}_{\beta }\left( x\right) ^{\frac{1}{2}}\mathbf{D}_{\func{diag}%
}\left( x\right) \mathbf{D}_{\beta }\left( x\right) ^{\frac{1}{2}} \\
&&-\left( \frac{1}{\sqrt{a_{11}\left( x\right) }}\mathbf{D}_{\beta }\left(
x\right) ^{\frac{1}{2}}b\left( x\right) \right) \left( \frac{1}{\sqrt{%
a_{11}\left( x\right) }}\mathbf{D}_{\beta }\left( x\right) ^{\frac{1}{2}%
}b\left( x\right) \right) ^{\func{tr}}.
\end{eqnarray*}%
Thus $\mathbf{D}_{\beta }\left( x\right) ^{\frac{1}{2}}\mathbf{Q}\left(
x\right) \mathbf{D}_{\beta }\left( x\right) ^{\frac{1}{2}}$ is positive
definite since $\mathbf{D}_{\func{diag}}\left( x\right) $ is and 
\begin{eqnarray*}
\xi ^{\func{tr}}\mathbf{D}_{\beta }\left( x\right) ^{\frac{1}{2}}\mathbf{Q}%
\left( x\right) \mathbf{D}_{\beta }\left( x\right) ^{\frac{1}{2}}\xi
&=&\left\vert \xi \right\vert ^{2}+\beta \left( \mathbf{D}_{\beta }\left(
x\right) ^{\frac{1}{2}}\xi \right) ^{\func{tr}}\mathbf{D}_{\func{diag}%
}\left( x\right) \left( \mathbf{D}_{\beta }\left( x\right) ^{\frac{1}{2}}\xi
\right) \\
&&-\left\vert \frac{1}{\sqrt{a_{11}\left( x\right) }}\xi \cdot \mathbf{D}%
_{\beta }\left( x\right) ^{\frac{1}{2}}b\left( x\right) \right\vert ^{2} \\
&\geq &\left\vert \xi \right\vert ^{2}-\left\vert \xi \right\vert ^{2}\frac{1%
}{a_{11}\left( x\right) }\left\vert \mathbf{D}_{\beta }\left( x\right) ^{%
\frac{1}{2}}b\left( x\right) \right\vert ^{2} \\
&\geq &\left\vert \xi \right\vert ^{2}-\left\vert \xi \right\vert ^{2}\left(
1-\beta \right) =\beta \left\vert \xi \right\vert ^{2}.
\end{eqnarray*}

Thus altogether we have%
\begin{eqnarray*}
\mathbf{D}_{\beta }\left( x\right) ^{\frac{1}{2}}\mathbf{Q}\left( x\right) 
\mathbf{D}_{\beta }\left( x\right) ^{\frac{1}{2}}-\beta \mathbb{I}_{n-1}
&\succcurlyeq &0, \\
\text{implies }\mathbf{Q}\left( x\right) -\beta \mathbf{D}_{\beta }\left(
x\right) ^{-1} &\succcurlyeq &0,
\end{eqnarray*}%
and hence%
\begin{eqnarray*}
\mathbf{Q}\left( x\right) &\succ &\beta \mathbf{D}_{\beta }\left( x\right)
^{-1}=\beta \left( \mathbf{D}\left( x\right) -\beta \mathbf{D}_{\func{diag}%
}\left( x\right) \right) \\
&\succcurlyeq &\beta \left( \mathbf{D}\left( x\right) -\frac{\beta }{\beta
_{0}}\mathbf{D}\left( x\right) \right) =\beta \left( 1-\frac{\beta }{\beta
_{0}}\right) \mathbf{D}\left( x\right) ,
\end{eqnarray*}%
where $\beta _{0}$ is as in (\ref{also have}) above. If we now choose $\beta
=\frac{\beta _{0}}{2}$ we obtain $\mathbf{Q}\left( x\right) \succ \frac{%
\beta _{0}}{4}\mathbf{D}\left( x\right) $. Since we trivially have $\mathbf{Q%
}\left( x\right) \preccurlyeq \mathbf{D}\left( x\right) $, it follows that $%
\mathbf{Q}\left( x\right) $ is positive semidefinite and comparable to the
matrix $\mathbf{D}\left( x\right) $, and hence also that $\mathbf{Q}\left(
x\right) _{\func{diag}}\sim \mathbf{D}_{\func{diag}}\left( x\right) $. Since 
$\mathbf{D}\left( x\right) $ is comparable to its diagonal matrix $\mathbf{D}%
_{\func{diag}}\left( x\right) $ (because $\mathbf{A}\left( x\right) $ is
comparabe to $\mathbf{A}_{\func{diag}}\left( x\right) $), we now conclude
that $\mathbf{Q}\left( x\right) $ is comparable to its associated diagonal
matrix $\mathbf{Q}_{\func{diag}}\left( x\right) $. Thus $\mathbf{Q}\left(
x\right) $ is diagonally elliptical by definition, and since $\mathbf{Q}_{%
\func{diag}}\left( x\right) \sim \mathbf{D}_{\func{diag}}\left( x\right) $
as mentioned above, we conclude that $q_{i,i}\left( x\right) \approx
a_{i,i}\left( x\right) $ for $2\leq i\leq n$, which is (\ref{q-a}).

Finally,%
\begin{equation*}
\sum_{k=2}^{n}a_{k,k}\mathbf{e}_{k}\otimes \mathbf{e}_{k}\prec \mathbf{A}_{%
\func{diag}}\left( x\right) \prec C\mathbf{A}\left( x\right) ,
\end{equation*}%
and 
\begin{eqnarray*}
\xi ^{\func{tr}}Z_{1}\left( x\right) Z_{1}\left( x\right) ^{\func{tr}}\xi
&=&\left( Z_{1}\left( x\right) \cdot \xi \right) ^{2}=\left( \sqrt{a_{1,1}}%
\xi _{1}+\sum_{k=2}^{n}\frac{a_{1,k}}{\sqrt{a_{1,1}}}\xi _{k}\right) ^{2} \\
&\leq &\left( \sqrt{a_{1,1}}\xi _{1}+\sum_{k=2}^{n}\frac{\gamma \sqrt{%
a_{1,1}a_{k,k}}}{\sqrt{a_{1,1}}}\xi _{k}\right) ^{2}=\left( \sqrt{a_{1,1}}%
\xi _{1}+\sum_{k=2}^{n}\gamma \sqrt{a_{k,k}}\xi _{k}\right) ^{2} \\
&\leq &C\left( a_{1,1}\xi _{1}^{2}+\sum_{k=2}^{n}a_{k,k}\xi _{k}^{2}\right)
=C\xi ^{\func{tr}}\mathbf{A}_{\func{diag}}\left( x\right) \xi \leq C\xi ^{%
\func{tr}}\mathbf{A}\left( x\right) \xi ,
\end{eqnarray*}%
shows that $Z_{1}\left( x\right) Z_{1}\left( x\right) ^{\func{tr}}\prec C%
\mathbf{A}\left( x\right) $. On the other hand, we have 
\begin{equation*}
\mathbf{Q}\left( x\right) \sim \mathbf{Q}_{\func{diag}}\left( x\right) \sim
\sum_{k=2}^{n}\left( a_{k,k}-\frac{\left( a_{1,k}\right) ^{2}}{a_{1,1}}%
\right) \mathbf{e}_{k}\otimes \mathbf{e}_{k}\sim \sum_{k=2}^{n}a_{k,k}%
\mathbf{e}_{k}\otimes \mathbf{e}_{k}\ ,
\end{equation*}%
since $\left( a_{1,k}\right) ^{2}\leq \gamma ^{2}a_{1,1}a_{k,k}$ for some $%
\gamma ^{2}<1$ by diagonal positivity of $\mathbf{A}\left( x\right) $. Thus
we have%
\begin{eqnarray*}
a_{1,1}\mathbf{e}_{1}\otimes \mathbf{e}_{1} &\prec &\sum_{k=1}^{n}a_{k,k}%
\mathbf{e}_{k}\otimes \mathbf{e}_{k}=\mathbf{A}_{\func{diag}}\left( x\right)
\sim \mathbf{A}\left( x\right) \\
&=&Z_{1}Z_{1}^{\func{tr}}+\left[ 
\begin{array}{cc}
0 & 0 \\ 
0 & \mathbf{Q}%
\end{array}%
\right] \sim Z_{1}Z_{1}^{\func{tr}}+\sum_{k=2}^{n}a_{k,k}\mathbf{e}%
_{k}\otimes \mathbf{e}_{k}\ ,
\end{eqnarray*}%
which proves (\ref{Z1 comp}).
\end{proof}

\begin{lemma}
\label{comp and sub}Suppose that the matrix function $\mathbf{A}\left(
x\right) $ is diagonally elliptical and subordinate. Then with notation as
in the previous lemma, $\mathbf{Q}\left( x\right) $ is subordinate.
\end{lemma}

\begin{proof}
We have by definition,%
\begin{equation*}
\mathbf{Q}=\left[ a_{k,j}-\frac{a_{1,k}a_{1,j}}{a_{1,1}}\right] _{k,j=2}^{n},
\end{equation*}%
and by the comparability and subordinaticity of the matrix $\mathbf{A}$, we
have the estimates $q_{j,j}\approx a_{j,j}$ and%
\begin{equation*}
\left\vert a_{k,j}\right\vert \leq \left( a_{k,k}a_{j,j}\right) ^{\frac{1}{2}%
}\text{ and }\left\vert \nabla a_{k,j}\right\vert \leq \left( a_{j,j}\right)
^{\frac{1}{2}},\ \ \ \ \ 1\leq k\leq j\leq n.
\end{equation*}%
Thus we compute%
\begin{eqnarray*}
\left\vert \nabla \left( a_{k,j}-\frac{a_{1,k}a_{1,j}}{a_{1,1}}\right)
\right\vert &=&\left\vert \nabla a_{k,j}-\frac{\left( \nabla a_{1,k}\right)
a_{1,j}}{a_{1,1}}-\frac{a_{1,k}\left( \nabla a_{1,j}\right) }{a_{1,1}}+\frac{%
a_{1,k}a_{1,j}\nabla E_{1}}{\left( a_{1,1}\right) ^{2}}\right\vert \\
&\leq &\left( a_{j,j}\right) ^{\frac{1}{2}}+\frac{\left(
a_{1,1}a_{k,k}\right) ^{\frac{1}{2}}\left( a_{j,j}\right) ^{\frac{1}{2}}}{%
a_{1,1}}+\frac{\left( a_{k,k}\right) ^{\frac{1}{2}}\left(
a_{1,1}a_{j,j}\right) ^{\frac{1}{2}}}{a_{1,1}}+\frac{\left(
a_{1,1}a_{k,k}\right) ^{\frac{1}{2}}\left( a_{1,1}a_{j,j}\right) ^{\frac{1}{2%
}}\left( a_{1,1}\right) ^{\frac{1}{2}}}{\left( a_{1,1}\right) ^{2}} \\
&\approx &\left( q_{j,j}\right) ^{\frac{1}{2}}\left\{ 1+\frac{\left(
q_{1,1}q_{k,k}\right) ^{\frac{1}{2}}}{q_{1,1}}+\frac{\left(
q_{1,1}qa_{k,k}\right) ^{\frac{1}{2}}}{q_{1,1}}+\frac{\left( q_{k,k}\right)
^{\frac{1}{2}}\left( q_{j,j}\right) ^{\frac{1}{2}}}{\left( q_{1,1}\right) ^{%
\frac{1}{2}}}\right\} \lesssim \left( q_{j,j}\right) ^{\frac{1}{2}},
\end{eqnarray*}%
which shows that $\mathbf{Q}$ is subordinate by Theorem \ref{sub char}.
\end{proof}

An induction argument using Lemma \ref{diag ellip} shows in particular that
we can iterate the $1$-Square Decomposition starting with a diagonally
elliptical matrix function $\mathbf{A}\left( x\right) $ to produce a $\left(
p-1\right) $-Square Decomposition 
\begin{equation*}
\mathbf{A}\left( x\right) =\sum_{k=1}^{p-1}X_{k}X_{k}^{\func{tr}}+\mathbf{A}%
_{p}\left( x\right)
\end{equation*}%
where $\mathbf{A}_{p}\left( x\right) $ is an $\left( n-p+1\right) \times
\left( n-p+1\right) $ diagonally elliptical matrix function. This iterated
construction will be used to prove Theorem \ref{final n Grushin}. The next
lemma shows that the property of being $\left( \ell ,\varepsilon ,\delta
^{\prime },\delta ^{\prime \prime }\right) $-strongly\emph{\ }$C^{4,2\delta
} $ also propagates through the one step square decomposition $1$-Square
Decomposition upon decreasing $\ell $ by $1$.

\begin{lemma}
\label{off diagonal}Suppose $\mathbf{A}\left( x\right) \in C^{4,2\delta
}\left( \mathbb{R}^{M}\right) $ is $\left( \ell ,\varepsilon ,\delta
^{\prime },\delta ^{\prime \prime }\right) $-strongly $C^{4,2\delta }$ for
some $1\leq \ell \leq n$ and $\frac{1}{4}\leq \varepsilon <1$. Then we have
the corresponding inequalities for $\mathbf{Q}\left( x\right) $ but with $%
k\geq 2$: 
\begin{eqnarray}
\left\vert D^{\mu }q_{k,k}\right\vert &\lesssim &\left\vert
q_{k,k}\right\vert ^{\left[ 1-\left\vert \mu \right\vert \varepsilon \right]
_{+}+\delta ^{\prime }},\ \ \ \ \ 1\leq \left\vert \mu \right\vert \leq 4%
\text{ and }2\leq k\leq \ell ,  \label{akj con} \\
\left[ q_{k,k}\right] _{\mu ,2\delta } &\lesssim &1,\ \ \ \ \ \left\vert \mu
\right\vert =4\text{ and }2\leq k\leq \ell ,  \notag \\
\left\vert D^{\mu }q_{k,j}\right\vert &\lesssim &\left( \min_{1\leq s\leq
j}q_{s,s}\right) ^{\left[ \frac{1}{2}+\left( 2-\left\vert \mu \right\vert
\right) \varepsilon \right] _{+}+\delta ^{\prime \prime }},\ \ \ \ \ 0\leq
\left\vert \mu \right\vert \leq 4\text{ and }2\leq k\leq \ell <j\leq p-1, 
\notag \\
\left[ q_{k,j}\right] _{\mu ,2\delta } &\lesssim &1,\ \ \ \ \ \left\vert \mu
\right\vert =4\text{ and }2\leq k\leq \ell <j\leq p-1,  \notag \\
\left\vert D^{\mu }q_{k,j}\right\vert &\lesssim &\left( \min_{1\leq s\leq
k}q_{s,s}\right) ^{\left[ \frac{1}{2}+\left( 2-\left\vert \mu \right\vert
\right) \varepsilon \right] _{+}+\delta ^{\prime \prime }},\ \ \ \ \ 0\leq
\left\vert \mu \right\vert \leq 4\text{ and }2\leq k\leq \ell \leq j\leq
p-1<j\leq n,  \notag \\
\left[ q_{k,j}\right] _{\mu ,2\delta } &\lesssim &1,\ \ \ \ \ \left\vert \mu
\right\vert =4\text{ and }2\leq k\leq \ell \leq p-1<j\leq n.  \notag
\end{eqnarray}%
In particular $\mathbf{Q}\in C^{4,2\delta }$, and if $\ell \geq 2$, then $%
\mathbf{Q}$ is $\left( \ell -1,\varepsilon ,\delta ^{\prime },\delta
^{\prime \prime }\right) $-strongly $C^{4,2\delta }$.
\end{lemma}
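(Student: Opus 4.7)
The plan is to exploit the explicit formula
\[
q_{k,j}(x) = a_{k,j}(x) - \frac{a_{1,k}(x)\,a_{1,j}(x)}{a_{1,1}(x)}, \qquad 2 \leq k,j \leq n,
\]
and reduce every inequality in (\ref{akj con}) to a bound on derivatives of the rank-one correction $R_{k,j}(x) \equiv a_{1,k}(x)\,a_{1,j}(x)/a_{1,1}(x)$. The target diagonal bound has $q_{k,k}$ on the right side, but by Lemma \ref{diag ellip} we have $q_{k,k} \approx a_{k,k}$, so it suffices to produce the identical bound with $a_{k,k}$ on the right; the $D^\mu a_{k,k}$ piece is already controlled by hypothesis (\ref{akj hyp}), so the diagonal case reduces to controlling $D^\mu R_{k,k}$. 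An analogous reduction handles both off-diagonal blocks, using $\tilde m_j \equiv \min_{2\leq s \leq j} q_{s,s} \approx \min_{2\leq s \leq j} a_{s,s}$ (and $\tilde m_k$ in the mixed block), with the observation that $m_j = \min(a_{1,1},\tilde m_j) \leq \tilde m_j$, so any bound expressed in terms of $m_j^p$ (with $p \geq 0$, $m_j \leq 1$) is automatically a bound in terms of $\tilde m_j^p$.

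The central computation is a Leibniz expansion combined with Fa\`{a} di Bruno applied to $D^\alpha(1/a_{1,1}) = \sum_{\pi \vdash \alpha} C_\pi\, (-1)^r r!\, a_{1,1}^{-(r+1)} \prod_{i=1}^r D^{\pi_i} a_{1,1}$, where $\pi=(\pi_1,\dots,\pi_r)$ ranges over set partitions of $\alpha$. A generic term in $D^\mu R_{k,j}$ then takes the form $a_{1,1}^{-(r+1)}\, D^\gamma a_{1,k}\, D^{\beta-\gamma}a_{1,j}\, \prod_i D^{\pi_i} a_{1,1}$ with $|\gamma|+|\beta-\gamma|+\sum|\pi_i|=|\mu|$. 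Feeding in the hypothesized estimates (\ref{akj hyp}) and using the monotonicity $m_j \leq m_k$, $m_j \leq a_{1,1}$, and $m_j \leq 1$, one verifies that the product is dominated by $\tilde m_j^{[\tfrac12+(2-|\mu|)\varepsilon]_+ + \delta''}$. The exponent arithmetic relies on $\varepsilon \geq 1/4$ and on the inequality $\delta'\leq 2\delta''$, which in the setting of Theorem \ref{final n Grushin} follows from $\delta' = 2\delta(1+\delta)/(2+\delta) < 2\delta < 2\delta''$. For the Hölder $[\,\cdot\,]_{\mu,2\delta}$ statements with $|\mu|=4$, I would apply the product and quotient rules $[fg]_{0,2\delta}\lesssim [f]_{0,2\delta}\|g\|_\infty+\|f\|_\infty[g]_{0,2\delta}$ term by term to the same Leibniz--Fa\`{a} di Bruno expansion, using the $2\delta$-Hölder control on fourth derivatives of $a_{1,k}$, $a_{1,j}$, $a_{1,1}$ supplied by the strong $C^{4,2\delta}$ hypothesis.

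The main obstacle I anticipate is the bookkeeping in the second step: for each $|\mu|\in\{1,2,3,4\}$ and each admissible partition $\pi$, one must check that the positive exponents contributed by $D^\gamma a_{1,k}$ and $D^{\beta-\gamma}a_{1,j}$ (roughly $1+4\varepsilon+2\delta''$ in total when $|\gamma|=|\beta-\gamma|=0$, and decreasing linearly in $|\gamma|+|\beta-\gamma|$ until the $[\,\cdot\,]_+$ saturates) outweigh the negative $a_{1,1}$-exponent $-(r+1)+\sum_i [1-|\pi_i|\varepsilon]_+ + r\delta'$ coming from Fa\`{a} di Bruno. The tightest case is $|\mu|=4$ with all $|\pi_i|\geq 1/\varepsilon$ (so the positive parts vanish), where the margin is precisely what is bought by $\varepsilon \geq 1/4$ together with $\delta' < 2\delta''$. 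Once the exponent inequality is isolated in each of the finitely many cases, the remainder of the proof is mechanical, and the two concluding sentences of the lemma ($\mathbf{Q}\in C^{4,2\delta}$ and, when $\ell\geq 2$, strong $C^{4,2\delta}$ with parameter $\ell-1$) follow by reading off the estimates just proved.
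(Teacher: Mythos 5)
Your proposal follows the same route as the paper's proof: expand $q_{k,j}=a_{k,j}-a_{1,k}a_{1,j}/a_{1,1}$, apply the Leibniz rule together with the Fa\`a di Bruno formula for $D^{\gamma}(1/a_{1,1})$ (the paper's composition formula (\ref{comp fla}) with $\psi(t)=1/t$, packaged as the auxiliary bound $|D^{\gamma}(1/a_{1,1})|\lesssim a_{1,1}^{-1-|\gamma|\varepsilon+\delta'}$), and close the exponent arithmetic using $m_j\leq m_k\leq a_{1,1}$, $[a+b]_+\leq[a]_++[b]_+$, and $\varepsilon\geq\tfrac14$, with the H\"older seminorm statements handled term by term since $[D^{\alpha}h]_{\alpha,2\delta}$ vanishes for $|\alpha|<4$.

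One small correction: the constraint $\delta'\leq 2\delta''$ that you invoke is not actually needed. The paper's exponent arithmetic closes for arbitrary $\delta',\delta''>0$ once $\varepsilon\geq\tfrac14$, because after assigning derivatives the residual $m_k$-exponent can always be made nonnegative (so that factor is dropped using $m_k\leq 1$, or traded against $m_j\leq m_k$), and the $m_j$-exponent never falls below the target $[\tfrac12+(2-|\mu|)\varepsilon]_++\delta''$. Your overassumption is harmless since Theorem \ref{final n Grushin} fixes $\delta'<2\delta<2\delta''$, but it suggests you may be bounding $m_k$-powers by $m_j$-powers too crudely somewhere; the paper's casework for $|\mu|\in\{1,2,3,4\}$ (splitting further into $\varepsilon\geq\tfrac12$ versus $\tfrac14\leq\varepsilon<\tfrac12$) is the place where the precise distribution between $m_k$ and $m_j$ matters, and your ``bookkeeping'' step needs to be carried out at that level of care before the claim can be considered proved.
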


In order to prove the lemma, we will use the trivial inequality $\left[ a+b%
\right] _{+}\leq \left[ a\right] _{+}+\left[ b\right] _{+}$ together with (%
\ref{q-a}), the product formula%
\begin{equation}
D^{\mu }\left( fgh\right) =\sum_{\alpha +\beta +\gamma =\mu }c_{\alpha
,\beta ,\gamma }^{\mu }\left( D^{\alpha }f\right) \left( D^{\beta }g\right)
\left( D^{\gamma }h\right) ,  \label{pdt form}
\end{equation}%
and the composition formula,%
\begin{equation}
\nabla ^{M}\left( \psi \circ h\right) =\sum_{m=1}^{M}\left( \psi ^{\left(
m\right) }\circ h\right) \left( \sum_{\substack{ \eta =\left( \eta
_{1},...,\eta _{M}\right) \in \mathbb{Z}_{+}^{M}  \\ \eta _{1}+\eta
_{2}+...+\eta _{M}=m  \\ \eta _{1}+2\eta _{2}+...+M\eta _{M}=M}}\left[ 
\begin{array}{c}
M \\ 
\eta%
\end{array}%
\right] \left( \nabla h\right) ^{\eta _{1}}...\left( \nabla ^{M}h\right)
^{\eta _{M}}\right) ,  \label{comp fla}
\end{equation}%
where $\left[ 
\begin{array}{c}
M \\ 
\alpha%
\end{array}%
\right] $ is defined for $\alpha =\left( \alpha _{1},...,\alpha _{M}\right)
\in \mathbb{Z}_{+}^{M}$ satisfying $\alpha _{1}+2\alpha _{2}+...+M\alpha
_{M}=M$. See e.g. \cite{MaSaUrVu}\ or \cite[(3.1)]{KoSa1}. We will use the
reciprocal function $\psi \left( t\right) =\frac{1}{t}$ in (\ref{comp fla})
so that $\psi ^{\left( m\right) }\left( t\right) =\left( -1\right)
^{m}m!t^{-m-1}$.

\begin{proof}
We first derive an auxiliary estimate that will be used throughout the
proof. Namely, for any multiindex $\gamma $ with $1\leq |\gamma |\leq 4$ we
have using (\ref{akj hyp}) and (\ref{comp fla}), 
\begin{eqnarray*}
\left\vert D^{\gamma }\frac{1}{a_{1,1}}\right\vert &=&\left\vert
\sum_{m=1}^{\left\vert \gamma \right\vert }\left( \frac{1}{a_{11}}\right)
^{m+1}\sum_{\substack{ \eta =\left( \eta _{1},...,\eta _{M}\right) \in 
\mathbb{Z}_{+}^{\left\vert \gamma \right\vert }  \\ \eta _{1}+\eta
_{2}+...+\eta _{\left\vert \gamma \right\vert }=m  \\ \eta _{1}+2\eta
_{2}+...+\left\vert \gamma \right\vert \eta _{\left\vert \gamma \right\vert
}=\left\vert \gamma \right\vert }}\left[ 
\begin{array}{c}
\gamma \\ 
\eta%
\end{array}%
\right] \left( \nabla a_{11}\right) ^{\eta _{1}}...\left( \nabla
^{\left\vert \gamma \right\vert }a_{11}\right) ^{\eta _{\left\vert \gamma
\right\vert }}\right\vert \\
&\lesssim &\sum_{m=1}^{\left\vert \gamma \right\vert }\left( \frac{1}{a_{11}}%
\right) ^{m+1}\sum_{\substack{ \eta =\left( \eta _{1},...,\eta _{M}\right)
\in \mathbb{Z}_{+}^{\left\vert \gamma \right\vert }  \\ \eta _{1}+\eta
_{2}+...+\eta _{\left\vert \gamma \right\vert }=m  \\ \eta _{1}+2\eta
_{2}+...+\left\vert \gamma \right\vert \eta _{\left\vert \gamma \right\vert
}=\left\vert \gamma \right\vert }}a_{11}^{\left( \left[ 1-\varepsilon \right]
_{+}+\delta ^{\prime }\right) \eta _{1}+\left( \left[ 1-2\varepsilon \right]
_{+}+\delta ^{\prime }\right) \eta _{2}+\dots +\left( \left[ 1-\left\vert
\gamma \right\vert \varepsilon \right] _{+}+\delta ^{\prime }\right) \eta
_{\left\vert \gamma \right\vert }} \\
&\leq &\left( \frac{1}{a_{11}}\right) ^{m+1}\sum_{\substack{ \eta =\left(
\eta _{1},...,\eta _{M}\right) \in \mathbb{Z}_{+}^{\left\vert \gamma
\right\vert }  \\ \eta _{1}+\eta _{2}+...+\eta _{\left\vert \gamma
\right\vert }=m  \\ \eta _{1}+2\eta _{2}+...+\left\vert \gamma \right\vert
\eta _{\left\vert \gamma \right\vert }=\left\vert \gamma \right\vert }}%
a_{11}^{\left( 1-\varepsilon +\delta ^{\prime }\right) \eta _{1}+\left(
1-2\varepsilon +\delta ^{\prime }\right) \eta _{2}+\dots +\left(
1-\left\vert \gamma \right\vert \varepsilon +\delta ^{\prime }\right) \eta
_{\left\vert \gamma \right\vert }} \\
&=&\sum_{m=1}^{\left\vert \gamma \right\vert }\left( \frac{1}{a_{11}}\right)
^{m+1}\sum_{\substack{ \eta =\left( \eta _{1},...,\eta _{M}\right) \in 
\mathbb{Z}_{+}^{\left\vert \gamma \right\vert }  \\ \eta _{1}+\eta
_{2}+...+\eta _{\left\vert \gamma \right\vert }=m  \\ \eta _{1}+2\eta
_{2}+...+\left\vert \gamma \right\vert \eta _{\left\vert \gamma \right\vert
}=\left\vert \gamma \right\vert }}a_{11}^{m-\left\vert \gamma \right\vert
\varepsilon +m\delta ^{\prime }}.
\end{eqnarray*}%
Note\ that this implies the estimate%
\begin{equation}
\left\vert D^{\gamma }\frac{1}{a_{1,1}}\right\vert \leq
Ca_{11}^{-1-\left\vert \gamma \right\vert \varepsilon +\delta ^{\prime }},\
\ \ \ \ 1\leq \left\vert \gamma \right\vert \leq 4,  \label{aux}
\end{equation}

The product formula (\ref{pdt form}), hypothesis (\ref{akj hyp}) and the
estimate (\ref{aux}) above imply that for $k\geq 2$ and $M=\left\vert \mu
\right\vert $, 
\begin{eqnarray*}
\left\vert \nabla ^{M}q_{k,k}\right\vert &=&\left\vert \nabla ^{M}\left(
a_{k,k}-\frac{\left( a_{1,k}\right) ^{2}}{a_{1,1}}\right) \right\vert \leq
\left\vert \nabla ^{M}a_{k,k}\right\vert +\left\vert \nabla ^{M}\frac{\left(
a_{1,k}\right) ^{2}}{a_{1,1}}\right\vert \\
&\lesssim &\left\vert a_{k,k}\right\vert ^{\left[ 1-\left\vert \mu
\right\vert \varepsilon \right] _{+}+\delta ^{\prime }} \\
&&+\sum_{\alpha +\beta +\gamma =\mu }\left( \min_{1\leq s\leq k}\left\{
a_{s,s}\left( x\right) \right\} \right) ^{\left[ \frac{1}{2}+\left(
2-\left\vert \alpha \right\vert \right) \varepsilon \right] _{+}+\delta
^{\prime \prime }}\left( \min_{1\leq s\leq k}\left\{ a_{s,s}\left( x\right)
\right\} \right) ^{\left[ \frac{1}{2}+\left( 2-\left\vert \beta \right\vert
\right) \varepsilon \right] _{+}+\delta ^{\prime \prime }}\left\vert
D^{\gamma }\frac{1}{a_{1,1}}\right\vert \\
&\lesssim &\left\vert a_{k,k}\right\vert ^{\left[ 1-\left\vert \mu
\right\vert \varepsilon \right] _{+}+\delta ^{\prime }}+\sum_{\alpha +\beta
+\gamma =\mu }\left( \min_{1\leq s\leq k}\left\{ a_{s,s}\left( x\right)
\right\} \right) ^{\left[ 1+\left( 4-\left( \left\vert \alpha \right\vert
+\left\vert \beta \right\vert \right) \right) \varepsilon \right]
_{+}+2\delta ^{\prime \prime }}a_{11}^{-1-\left\vert \gamma \right\vert
\varepsilon +\delta ^{\prime }} \\
&\lesssim &\left\vert a_{k,k}\right\vert ^{\left[ 1-\left\vert \mu
\right\vert \varepsilon \right] _{+}+\delta ^{\prime }}+\sum_{\alpha +\beta
+\gamma =\mu }\left( \min_{1\leq s\leq k}\left\{ a_{s,s}\left( x\right)
\right\} \right) ^{\left[ 1+\left( 4-\left( \left\vert \alpha \right\vert
+\left\vert \beta \right\vert \right) \right) \varepsilon \right]
_{+}+2\delta ^{\prime \prime }-1-\left\vert \gamma \right\vert \varepsilon
+\delta ^{\prime }} \\
&\lesssim &\left\vert a_{k,k}\right\vert ^{\left[ 1-\left\vert \mu
\right\vert \varepsilon \right] _{+}+\delta ^{\prime }},
\end{eqnarray*}%
where the last line above follows from the fact that%
\begin{eqnarray*}
&&\left[ 1+\left( 4-\left( \left\vert \alpha \right\vert +\left\vert \beta
\right\vert \right) \right) \varepsilon \right] _{+}+2\delta ^{\prime \prime
}-1-\left\vert \gamma \right\vert \varepsilon +\delta ^{\prime } \\
&\geq &1+\left( 4-\left( \left\vert \alpha \right\vert +\left\vert \beta
\right\vert \right) \right) \varepsilon +2\delta ^{\prime }-1-\left\vert
\gamma \right\vert \varepsilon +\delta ^{\prime } \\
&=&\left( 4-\left\vert \mu \right\vert \right) \varepsilon +3\delta ^{\prime
}\geq \left[ 1-\left\vert \mu \right\vert \varepsilon \right] _{+}+\delta
^{\prime },
\end{eqnarray*}%
if $\left\vert \mu \right\vert \leq 4$ and $\varepsilon \geq \frac{1}{4}$.
Since $a_{k,k}\approx q_{k,k}$ by (\ref{q-a}), we have established the first
line in (\ref{akj con}).

The second line is obtained similarly using the subproduct and subchain
rules for the seminorm (\ref{def mod D}) as in \cite{Bon} and \cite[(3.7)
and (3.8) and subsequent proofs]{KoSa1}. Indeed, if $\left\vert \mu
\right\vert =4$, then 
\begin{eqnarray*}
\left[ \frac{\left( a_{1,k}\right) ^{2}}{a_{1,1}}\right] _{\mu ,2\delta }
&=&\left\vert \sum_{\alpha +\beta +\gamma =\mu }c_{\alpha ,\beta ,\gamma
}^{\mu }\left( D^{\alpha }a_{1,k}\right) \left( D^{\beta }a_{1,k}\right)
\left( D^{\gamma }\frac{1}{a_{1,1}}\right) \right\vert ^{2\delta } \\
&\lesssim &\sum_{\alpha +\beta +\gamma =\mu }\left[ a_{1,k}\right] _{\alpha
,2\delta }\left\vert D^{\beta }a_{1,k}\right\vert \left\vert D^{\gamma }%
\frac{1}{a_{1,1}}\right\vert \\
&&+\sum_{\alpha +\beta +\gamma =\mu }\left\vert D^{\alpha
}a_{1,k}\right\vert \left[ a_{1,k}\right] _{\beta ,2\delta }\left\vert
D^{\gamma }\frac{1}{a_{1,1}}\right\vert \\
&&+\sum_{\alpha +\beta +\gamma =\mu }\left\vert D^{\alpha
}a_{1,k}\right\vert \left\vert D^{\beta }a_{1,k}\right\vert \left[ \frac{1}{%
a_{1,1}}\right] _{\gamma ,2\delta }.
\end{eqnarray*}%
Now use that if $\left\vert \alpha \right\vert <4$, then%
\begin{eqnarray*}
\left[ a_{1,k}\right] _{\alpha ,2\delta } &=&\lim \sup_{y,z\rightarrow x}%
\frac{\left\vert D^{\alpha }a_{1,k}\left( y\right) -D^{\alpha }a_{1,k}\left(
z\right) \right\vert }{\left\vert y-z\right\vert ^{2\delta }} \\
&\leq &\lim \sup_{y,z\rightarrow x}\frac{\left\vert D^{\alpha }a_{1,k}\left(
y\right) -D^{\alpha }a_{1,k}\left( z\right) \right\vert }{\left\vert
y-z\right\vert }\left\vert y-z\right\vert ^{1-2\delta }=0,
\end{eqnarray*}%
while if $\left\vert \alpha \right\vert =4$, then%
\begin{equation*}
\left[ a_{1,k}\right] _{\alpha ,2\delta }\leq \left\Vert a_{1,k}\right\Vert
_{C^{4,2\delta }}\leq C.
\end{equation*}

Now we turn to proving the third line in (\ref{akj con}). In order to
simplify notation, set%
\begin{equation*}
m_{k}\left( x\right) \equiv \min_{1\leq s\leq k}\left\{ q_{s,s}\left(
x\right) \right\} ,\ \ \ \ \ 1\leq k\leq n,
\end{equation*}%
and note that by (\ref{q-a}) we have $m_{k}\left( x\right) \approx
\min_{1\leq s\leq k}\left\{ a_{s,s}\left( x\right) \right\} $. The case $%
\left\vert \mu \right\vert =0$ is immediate from the identity $%
q_{k,j}=a_{k,j}-\frac{a_{1,k}a_{1,j}}{a_{1,1}}$, and the estimate%
\begin{equation*}
\left\vert \frac{a_{1,k}a_{1,j}}{a_{1,1}}\right\vert \lesssim \frac{\left(
m_{k}\right) ^{\frac{1}{2}+2\varepsilon +\delta ^{\prime \prime }}\left(
m_{j}\right) ^{\frac{1}{2}+2\varepsilon +\delta ^{\prime \prime }}}{a_{1,1}}%
\leq \left( a_{1,1}\right) ^{-\frac{1}{2}+2\varepsilon +\delta ^{\prime
\prime }}\left( m_{j}\right) ^{\frac{1}{2}+2\varepsilon +\delta ^{\prime
\prime }}\leq \left( m_{j}\right) ^{\left[ \frac{1}{2}+\left( 2-\left\vert
\mu \right\vert \right) \varepsilon \right] _{+}+\delta ^{\prime \prime }},
\end{equation*}%
since $-\frac{1}{2}+2\varepsilon +\delta ^{\prime \prime }\geq 0$.

To prove the cases $\left\vert \mu \right\vert \geq 1$, we continue to use (%
\ref{akj hyp}), (\ref{aux}) and (\ref{q-a}), to obtain the estimate%
\begin{eqnarray*}
&&\left\vert D^{\mu }\frac{a_{1,k}a_{1,j}}{a_{1,1}}\right\vert =\left\vert
\sum_{\alpha +\beta +\gamma =\mu }c_{\alpha ,\beta ,\gamma }^{\mu }\left(
D^{\alpha }a_{1,k}\right) \left( D^{\beta }a_{1,j}\right) \left( D^{\gamma }%
\frac{1}{a_{1,1}}\right) \right\vert \\
&\lesssim &\sum_{\alpha +\beta +\gamma =\mu }\left( m_{k}\right) ^{\left[ 
\frac{1}{2}+\left( 2-\left\vert \alpha \right\vert \right) \varepsilon %
\right] _{+}+\delta ^{\prime \prime }}\left( m_{j}\right) ^{\left[ \frac{1}{2%
}+\left( 2-\left\vert \beta \right\vert \right) \varepsilon \right]
_{+}+\delta ^{\prime \prime }}\left( a_{1,1}\right) ^{-1-\left\vert \gamma
\right\vert \varepsilon +\delta ^{\prime }} \\
&\lesssim &\left( q_{1,1}\right) ^{-1-\left\vert \gamma \right\vert
\varepsilon +\delta ^{\prime }}\ \left( m_{k}\right) ^{\left[ \frac{1}{2}%
+\left( 2-\left\vert \alpha \right\vert \right) \varepsilon \right]
_{+}+\delta ^{\prime \prime }}\ \left( m_{j}\right) ^{\left[ \frac{1}{2}%
+\left( 2-\left\vert \beta \right\vert \right) \varepsilon \right]
_{+}+\delta ^{\prime \prime }} \\
&\lesssim &\left( m_{k}\right) ^{\left[ \frac{1}{2}+\left( 2-\left\vert
\alpha \right\vert \right) \varepsilon \right] _{+}+\delta ^{\prime \prime
}-1-\left\vert \gamma \right\vert \varepsilon +\delta ^{\prime }}\ \left(
m_{j}\right) ^{\left[ \frac{1}{2}+\left( 2-\left\vert \beta \right\vert
\right) \varepsilon \right] _{+}+\delta ^{\prime \prime }}.
\end{eqnarray*}

Note that placing a derivative in position $\beta $ in the last line above
causes the most damage up to order $2$ because $m_{j}\leq m_{k}\leq
m_{1}=q_{1,1}$. After that, placing a third derivative in position $\alpha $
equivalently $\gamma $ is the next most damaging, or depending on the
precise relation between $m_{j}$ and $m_{k}$, repeating the position $\beta $
once more might be worse. Indeed, placing a third derivative in position $%
\alpha $ or $\gamma $ loses $m_{k}^{-\varepsilon }$, while placing a third
derivative in position $\beta \,\,$loses $m_{j}^{-\frac{1}{2}}$, and since $%
\varepsilon \geq \frac{1}{4}$, either of these could be largest under the
restriction $m_{j}\leq m_{k}$. However after three derivatives have been
assigned as above, the fourth derivative does the most damage when applied
to position $\alpha $ equivalently $\gamma $. As a consequence we need only
consider the cases where $\beta $ is filled up to order $2$, and then $%
\alpha $ or $\beta $ is filled thereafter. This will become clear as the
reader progresses through the proof.

In the case $\left\vert \mu \right\vert =1$, $\left\vert D^{\mu }\frac{%
a_{1,k}a_{1,j}}{a_{1,1}}\right\vert $ is bounded by%
\begin{eqnarray*}
&&\left( m_{k}\right) ^{\left[ \frac{1}{2}+\left( 2-\left\vert \alpha
\right\vert \right) \varepsilon \right] _{+}+\delta ^{\prime \prime
}-1-\left\vert \gamma \right\vert \varepsilon +\delta ^{\prime }}\ \left(
m_{j}\right) ^{\left[ \frac{1}{2}+\left( 2-\left\vert \beta \right\vert
\right) \varepsilon \right] _{+}+\delta ^{\prime \prime }} \\
&\leq &\left( m_{k}\right) ^{\left[ \frac{1}{2}+2\varepsilon \right]
_{+}+\delta ^{\prime \prime }-1+\delta ^{\prime }}\ \left( m_{j}\right) ^{%
\left[ \frac{1}{2}+\varepsilon \right] _{+}+\delta ^{\prime \prime }} \\
&=&\left( m_{k}\right) ^{\frac{1}{2}+2\varepsilon +\delta ^{\prime }+\delta
^{\prime \prime }-1}\ \left( m_{j}\right) ^{\frac{1}{2}+\varepsilon +\delta
^{\prime \prime }}\leq \left( m_{j}\right) ^{\frac{1}{2}+\varepsilon +\delta
^{\prime \prime }}=\left( m_{j}\right) ^{\left[ \frac{1}{2}+\left(
2-\left\vert \mu \right\vert \right) \varepsilon \right] _{+}+\delta
^{\prime \prime }},
\end{eqnarray*}%
since $\frac{1}{2}+2\varepsilon +\delta ^{\prime }+\delta ^{\prime \prime
}-1\geq \delta ^{\prime }+\delta ^{\prime \prime }>0$.

In the case $\left\vert \mu \right\vert =2$, $\left\vert D^{\mu }\frac{%
a_{1,k}a_{1,j}}{a_{1,1}}\right\vert $ is bounded by 
\begin{eqnarray*}
&&\left( m_{k}\right) ^{\left[ \frac{1}{2}+\left( 2-\left\vert \alpha
\right\vert \right) \varepsilon \right] _{+}+\delta ^{\prime \prime
}-1-\left\vert \gamma \right\vert \varepsilon +\delta ^{\prime }}\ \left(
m_{j}\right) ^{\left[ \frac{1}{2}+\left( 2-\left\vert \beta \right\vert
\right) \varepsilon \right] _{+}+\delta ^{\prime \prime }} \\
&\leq &\left( m_{k}\right) ^{\left[ \frac{1}{2}+2\varepsilon \right]
_{+}+\delta ^{\prime \prime }-1+\delta ^{\prime }}\ \left( m_{j}\right) ^{%
\frac{1}{2}+\delta ^{\prime \prime }}=\left( m_{k}\right) ^{\frac{1}{2}%
+2\varepsilon -1+\delta ^{\prime }+\delta ^{\prime \prime }}\ \left(
m_{j}\right) ^{\frac{1}{2}+\delta ^{\prime \prime }} \\
&\leq &\left( m_{j}\right) ^{\frac{1}{2}+\delta ^{\prime \prime }}=\left(
m_{j}\right) ^{\left[ \frac{1}{2}+\left( 2-\left\vert \mu \right\vert
\right) \varepsilon \right] _{+}+\delta ^{\prime \prime }},
\end{eqnarray*}%
since $\frac{1}{2}+2\varepsilon -1+\delta ^{\prime }+\delta ^{\prime \prime
}\geq \delta ^{\prime }+\delta ^{\prime \prime }>0$.

In the case $\left\vert \mu \right\vert =3$, the most damage is caused
either when $\left\vert \beta \right\vert =2$ and $\left\vert \alpha
\right\vert =1$, or when $\left\vert \beta \right\vert =3$. In the former
instance, $\left\vert D^{\mu }\frac{a_{1,k}a_{1,j}}{a_{1,1}}\right\vert $ is
bounded by%
\begin{equation*}
\left( m_{k}\right) ^{\left[ \frac{1}{2}+\left( 2-\left\vert \alpha
\right\vert \right) \varepsilon \right] _{+}+\delta ^{\prime \prime
}-1-\left\vert \gamma \right\vert \varepsilon +\delta ^{\prime }}\ \left(
m_{j}\right) ^{\left[ \frac{1}{2}+\left( 2-\left\vert \beta \right\vert
\right) \varepsilon \right] _{+}+\delta ^{\prime \prime }}\leq \left(
m_{k}\right) ^{-\frac{1}{2}+\varepsilon +\delta ^{\prime }+\delta ^{\prime
\prime }}\ \left( m_{j}\right) ^{\frac{1}{2}+\delta ^{\prime \prime }}.
\end{equation*}%
In the case $\varepsilon \geq \frac{1}{2}$, this is bounded by%
\begin{equation*}
\left( m_{k}\right) ^{\delta ^{\prime }+\delta ^{\prime \prime }}\ \left(
m_{j}\right) ^{\frac{1}{2}+\delta ^{\prime \prime }}\leq \left( m_{j}\right)
^{\frac{1}{2}+\delta ^{\prime \prime }}=\left( m_{j}\right) ^{\left[ \frac{1%
}{2}+\left( 2-\left\vert \mu \right\vert \right) \varepsilon \right]
_{+}+\delta ^{\prime \prime }}.
\end{equation*}%
If $\frac{1}{4}\leq \varepsilon <\frac{1}{2}$, then this is instead bounded
by%
\begin{equation*}
\left( m_{k}\right) ^{-\frac{1}{2}+2\varepsilon +\delta ^{\prime }+\delta
^{\prime \prime }}\ \left( m_{j}\right) ^{\frac{1}{2}-\varepsilon +\delta
^{\prime \prime }}\leq \left( m_{j}\right) ^{\frac{1}{2}-\varepsilon +\delta
^{\prime \prime }}=\left( m_{j}\right) ^{\left[ \frac{1}{2}+\left(
2-\left\vert \mu \right\vert \right) \varepsilon \right] _{+}+\delta
^{\prime }+\delta ^{\prime \prime }}.
\end{equation*}%
since $-\frac{1}{2}+2\varepsilon +\delta ^{\prime }+\delta ^{\prime \prime
}\geq \delta ^{\prime }+\delta ^{\prime \prime }>0$. In the latter instance,
when $\left\vert \beta \right\vert =3$, $\left\vert D^{\mu }\frac{%
a_{1,k}a_{1,j}}{a_{1,1}}\right\vert $ is bounded by%
\begin{eqnarray*}
&&\left( m_{k}\right) ^{\left[ \frac{1}{2}+\left( 2-\left\vert \alpha
\right\vert \right) \varepsilon \right] _{+}+\delta ^{\prime \prime
}-1-\left\vert \gamma \right\vert \varepsilon +\delta ^{\prime }}\ \left(
m_{j}\right) ^{\left[ \frac{1}{2}+\left( 2-\left\vert \beta \right\vert
\right) \varepsilon \right] _{+}+\delta ^{\prime \prime }} \\
&\leq &\left( m_{k}\right) ^{\left[ \frac{1}{2}+2\varepsilon \right]
_{+}+\delta ^{\prime \prime }-1+\delta ^{\prime }}\ \left( m_{j}\right) ^{%
\left[ \frac{1}{2}-\varepsilon \right] _{+}+\delta ^{\prime \prime }}\leq
\left( m_{j}\right) ^{\left[ \frac{1}{2}-\varepsilon \right] _{+}+\delta
^{\prime \prime }}=\left( m_{j}\right) ^{\left[ \frac{1}{2}+\left(
2-\left\vert \mu \right\vert \right) \varepsilon \right] _{+}+\delta
^{\prime \prime }},
\end{eqnarray*}%
since $2\varepsilon -\frac{1}{2}+\delta ^{\prime }+\delta ^{\prime \prime
}\geq \delta ^{\prime }+\delta ^{\prime \prime }>0$.

In the case $\left\vert \mu \right\vert =4$, we again consider the two most
damaging cases. When $\left\vert \beta \right\vert =3$ and $\left\vert
\alpha \right\vert =1$, we have that $\left\vert D^{\mu }\frac{a_{1,k}a_{1,j}%
}{a_{1,1}}\right\vert $ is bounded by%
\begin{eqnarray*}
&&\left( m_{k}\right) ^{\left[ \frac{1}{2}+\left( 2-\left\vert \alpha
\right\vert \right) \varepsilon \right] _{+}+\delta ^{\prime \prime
}-1-\left\vert \gamma \right\vert \varepsilon +\delta ^{\prime }}\ \left(
m_{j}\right) ^{\left[ \frac{1}{2}+\left( 2-\left\vert \beta \right\vert
\right) \varepsilon \right] _{+}+\delta ^{\prime \prime }} \\
&=&\left( m_{k}\right) ^{-\frac{1}{2}+\varepsilon +\delta ^{\prime }+\delta
^{\prime \prime }}\ \left( m_{j}\right) ^{\left[ \frac{1}{2}-\varepsilon %
\right] _{+}+\delta ^{\prime \prime }}.
\end{eqnarray*}%
In the case $\varepsilon \geq \frac{1}{2}$ this is bounded by $m_{j}^{\delta
^{\prime \prime }}=\left( m_{j}\right) ^{\left[ \frac{1}{2}+\left(
2-\left\vert \mu \right\vert \right) \varepsilon \right] _{+}+\delta
^{\prime \prime }}$, while in the case $\frac{1}{4}\leq \varepsilon <\frac{1%
}{2}$, this is bounded by 
\begin{equation*}
\left( m_{k}\right) ^{-\frac{1}{2}+\varepsilon +\delta ^{\prime }+\delta
^{\prime \prime }}\left( m_{j}\right) ^{\frac{1}{2}-\varepsilon +\delta
^{\prime \prime }}\leq \left( m_{k}\right) ^{\delta ^{\prime }+\delta
^{\prime \prime }}m_{j}^{\delta ^{\prime \prime }}\leq m_{j}^{\delta
^{\prime \prime }}=\left( m_{j}\right) ^{\left[ \frac{1}{2}+\left(
2-\left\vert \mu \right\vert \right) \varepsilon \right] _{+}+\delta
^{\prime \prime }},
\end{equation*}%
since $\delta ^{\prime }+\delta ^{\prime \prime }>0$. When $\left\vert
\alpha \right\vert =\left\vert \beta \right\vert =2$, then $\left\vert
D^{\mu }\frac{a_{1,k}a_{1,j}}{a_{1,1}}\right\vert $ is bounded by%
\begin{eqnarray*}
&&\left( m_{k}\right) ^{\left[ \frac{1}{2}+\left( 2-\left\vert \alpha
\right\vert \right) \varepsilon \right] _{+}+\delta ^{\prime \prime
}-1-\left\vert \gamma \right\vert \varepsilon +\delta ^{\prime }}\ \left(
m_{j}\right) ^{\left[ \frac{1}{2}+\left( 2-\left\vert \beta \right\vert
\right) \varepsilon \right] _{+}+\delta ^{\prime \prime }} \\
&\leq &\left( m_{k}\right) ^{\left[ \frac{1}{2}\right] _{+}+\delta ^{\prime
\prime }-1+\delta ^{\prime }}\ \left( m_{j}\right) ^{\left[ \frac{1}{2}%
\right] _{+}+\delta ^{\prime \prime }}\leq \left( m_{j}\right) ^{\delta
^{\prime \prime }}=\left( m_{j}\right) ^{\left[ \frac{1}{2}+\left(
2-\left\vert \mu \right\vert \right) \varepsilon \right] _{+}+\delta
^{\prime \prime }},
\end{eqnarray*}%
since $\delta ^{\prime }+\delta ^{\prime \prime }>0$.

This completes the proof of the third line in (\ref{akj con}), and just as
before, the fourth line is obtained similarly using the subproduct and
subchain rules for the H\"{o}lder expression $\left[ \cdot \right] _{\mu
,2\delta }$. Indeed, if $\left\vert \mu \right\vert =4$, then 
\begin{eqnarray*}
\left[ \frac{a_{1,k}a_{1,j}}{a_{1,1}}\right] _{\mu ,2\delta } &\lesssim
&\sum_{\alpha +\beta +\gamma =\mu }\left[ a_{1,k}\right] _{\alpha ,2\delta
}\left\vert D^{\beta }a_{1,j}\right\vert \left\vert D^{\gamma }\frac{1}{%
a_{1,1}}\right\vert \\
&&+\sum_{\alpha +\beta +\gamma =\mu }\left\vert D^{\alpha
}a_{1,k}\right\vert \left[ a_{1,j}\right] _{\beta ,2\delta }\left\vert
D^{\gamma }\frac{1}{a_{1,1}}\right\vert \\
&&+\sum_{\alpha +\beta +\gamma =\mu }\left\vert D^{\alpha
}a_{1,k}\right\vert \left\vert D^{\beta }a_{1,j}\right\vert \left[ \frac{1}{%
a_{1,1}}\right] _{\gamma ,2\delta },
\end{eqnarray*}%
and if $\left\vert \alpha \right\vert <4$, then%
\begin{eqnarray*}
\left[ a_{1,k}\right] _{\alpha ,2\delta } &=&\lim \sup_{y,z\rightarrow x}%
\frac{\left\vert D^{\alpha }a_{1,k}\left( y\right) -D^{\alpha }a_{1,k}\left(
z\right) \right\vert }{\left\vert y-z\right\vert ^{2\delta }} \\
&\leq &\lim \sup_{y,z\rightarrow x}\frac{\left\vert D^{\alpha }a_{1,k}\left(
y\right) -D^{\alpha }a_{1,k}\left( z\right) \right\vert }{\left\vert
y-z\right\vert }\left\vert y-z\right\vert ^{1-2\delta }=0,
\end{eqnarray*}%
while if $\left\vert \alpha \right\vert =4$, then%
\begin{equation*}
\left[ a_{1,k}\right] _{\alpha ,2\delta }\leq \left\Vert a_{1,k}\right\Vert
_{C^{4,2\delta }}\leq C.
\end{equation*}

Finally, we note that it is an easy matter to check that when $\ell <j\leq n$%
, we can replace $q_{j,j}$ with the larger quantity $q_{k,k}$ on the right
hand side of the estimates in the fifth and sixth lines. This completes the
proof of Lemma \ref{off diagonal}.
\end{proof}

\subsection{Proof of the main decomposition theorem}

At this point we can apply induction together with Lemmas \ref{diag ellip}, %
\ref{comp and sub}\ and \ref{off diagonal}, and Theorem \ref{sub char}, to
prove Theorem \ref{final n Grushin}.

\begin{proof}[Proof of Theorem \protect\ref{final n Grushin}]
Set $\mathbf{Q}_{1}\left( x\right) \equiv \mathbf{A}\left( x\right) $ and
suppose that $\mathbf{Q}_{1}\left( x\right) $ is diagonally elliptical and $%
\left( p-1,\varepsilon \right) $-strongly $C^{4,2\delta }$. Let $\mathbf{Q}%
_{1}\left( x\right) =Y_{1}\left( x\right) Y_{1}\left( x\right) ^{\func{tr}}+%
\mathbf{B}_{1}\left( x\right) $ be the $1$-Square Decomposition of $\mathbf{Q%
}_{1}\left( x\right) $ with $\mathbf{B}_{1}\left( x\right) =\left[ 
\begin{array}{cc}
0 & \mathbf{0}_{1\times \left( n-1\right) } \\ 
\mathbf{0}_{\left( n-1\right) \times 1} & \mathbf{Q}_{2}\left( x\right)%
\end{array}%
\right] $. Then Lemmas \ref{diag ellip}\ and \ref{off diagonal} show that $%
\mathbf{Q}_{2}\left( x\right) $ is diagonally elliptical and satisfies (\ref%
{Z1 comp}), and is $\left( p-2\right) $-strongly $C^{4,2\delta }$ so long as 
$p\geq 3$. Now we apply the above reasoning to the $\left( n-1\right) \times
\left( n-1\right) $ matrix $\mathbf{Q}_{2}\left( x\right) $ to obtain the $1$%
-Square Decomposition of $\mathbf{Q}_{2}\left( x\right) =Y_{2}\left(
x\right) Y_{2}\left( x\right) ^{\func{tr}}+\mathbf{B}_{2}\left( x\right) $
with $\mathbf{B}_{2}\left( x\right) =\left[ 
\begin{array}{cc}
0 & \mathbf{0}_{1\times \left( n-2\right) } \\ 
\mathbf{0}_{\left( n-2\right) \times 1} & \mathbf{Q}_{3}\left( x\right)%
\end{array}%
\right] $, and we see that $\mathbf{Q}_{3}\left( x\right) $ is diagonally
elliptical and satisfies (\ref{Z1 comp}) relative to $\mathbf{Q}_{2}\left(
x\right) $, and is $\left( p-3\right) $-strongly $C^{4,2}$ so long as $p\geq
4$. By induction we obtain that%
\begin{equation*}
\mathbf{Q}_{p-2}\left( x\right) =Y_{p-2}\left( x\right) Y_{p-2}\left(
x\right) ^{\func{tr}}+\mathbf{B}_{p-2}\left( x\right)
\end{equation*}%
where $\mathbf{Q}_{p-1}\left( x\right) $ is diagonally elliptical and
satisfies (\ref{Z1 comp}) relative to $\mathbf{Q}_{p-2}\left( x\right) $,
and is $1$-strongly $C^{4,2\delta }$. One more application of Lemmas \ref%
{diag ellip}\ and \ref{off diagonal} yields that 
\begin{equation*}
\mathbf{Q}_{p-1}\left( x\right) =Y_{p-1}\left( x\right) Y_{p-1}\left(
x\right) ^{\func{tr}}+\mathbf{B}_{p-1}\left( x\right)
\end{equation*}%
where $\mathbf{Q}_{p}\left( x\right) $ is diagonally elliptical. While we
cannot now assert that $\mathbf{Q}_{p}\left( x\right) $ is $1$-strongly $%
C^{4,2\delta }$, we do have that $\mathbf{Q}_{p-1}=\left[ q_{k,j}\right]
_{k,j=1}^{n-p+1}$ is $1$-strongly $C^{4,2\delta }$, and hence 
\begin{equation*}
\mathbf{Q}_{p}=\left[ q_{k,j}-\frac{q_{1k}q_{1j}}{q_{11}}\right]
_{j,k=2}^{n}\in C^{4,2\delta },
\end{equation*}%
upon estimating derivatives of $\frac{q_{1k}q_{1j}}{q_{11}}$ as above.

Now let $Z_{k}$ be the $n$-vector whose final $n-k+1$ entries are the
entries of $Y_{k}$, with zeroes elsewhere, and similarly let $\mathbf{A}_{p}$
be the $n\times n$ matrix whose bottom right $\left( n-p+1\right) \times
\left( n-p+1\right) $ block is $\mathbf{Q}_{p}$, with zeroes elsewhere. Then
we have 
\begin{equation*}
\mathbf{A}\left( x\right) =\sum_{k=1}^{p-1}Z_{k}Z_{k}^{\func{tr}}+\mathbf{A}%
_{p}\left( x\right) ,
\end{equation*}%
which is the claimed formula. Moreover we have the following extension of (%
\ref{Z1 comp}):%
\begin{equation}
ca_{k,k}\mathbf{e}_{k}\otimes \mathbf{e}_{k}\prec Z_{k}\left( x\right)
Z_{k}\left( x\right) ^{\func{tr}}+\sum_{m=k+1}^{n}a_{m,m}\mathbf{e}%
_{m}\otimes \mathbf{e}_{m}\prec C\sum_{m=k}^{n}a_{m,m}\mathbf{e}_{m}\otimes 
\mathbf{e}_{m},\ \ \ \ \ 1\leq k\leq p-1.  \label{Zk comp}
\end{equation}

It remains now to prove that we can further decompose each of the postive
dyads $Z_{k}\left( x\right) Z_{k}\left( x\right) ^{\func{tr}}$ into a finite
sum of squares of $C^{2,\delta }$ vector fields. Let $\mathbf{Q}_{\ell
}\left( x\right) =\left[ q_{k,j}\right] _{k,j=1}^{n}$ and $E_{\ell }\equiv
q_{\ell ,\ell }$. To obtain the sum of squares of $C^{2,\delta }$ vector
fields decomposition of the positive dyad $Z_{k}\left( x\right) Z_{k}\left(
x\right) ^{\func{tr}}$, we will begin with the conclusion of Lemma \ref{off
diagonal}, namely that $E_{k}=q_{k,k}\in C^{4,2\delta }$ and satisfies%
\begin{equation*}
\left\vert D^{\mu }E_{k}\right\vert \lesssim \left( E_{k}\right) ^{\left[
1-\left\vert \mu \right\vert \varepsilon \right] _{+}+\delta ^{\prime }}\leq
\left( E_{k}\right) ^{\delta ^{\prime }}=\left( E_{k}\right) ^{\frac{2\delta
\left( 1+\delta \right) }{2+\delta }},\ \ \ \ \ 1\leq \left\vert \mu
\right\vert \leq 4,\ 2\leq k\leq \ell ,
\end{equation*}%
together with the scalar sum of squares Theorem \ref{efs eps}, and we will
show that there exists a vector function $\mathbf{t}_{k}^{k}\left( x\right)
=\left\{ t_{k,i}^{k}\left( x\right) \right\} _{i=1}^{I}\in C^{2,\delta }$
such that%
\begin{eqnarray}
E_{k}\left( x\right) &=&\left\vert \mathbf{t}_{k}^{k}\left( x\right)
\right\vert ^{2},  \label{st} \\
\left\vert t_{k,i}^{k}\left( x\right) \right\vert &\leq &\left( E_{k}\right)
^{\frac{1}{2}},\ \ \ \left\vert \nabla t_{k,i}^{k}\left( x\right)
\right\vert \lesssim \left( E_{k}\right) ^{\frac{1}{2}\left( \left[
1-2\varepsilon \right] _{+}+\delta ^{\prime \prime \prime }\right) },\ \ \
\left\vert \nabla ^{2}t_{k,i}^{k}\left( x\right) \right\vert \lesssim \left(
E_{k}\right) ^{\frac{\delta ^{2}}{2+\delta }},  \notag
\end{eqnarray}%
where $\delta ^{\prime \prime \prime }=\frac{\delta }{4+2\delta }$.

Now for $1\leq k\leq p-1$, define vector functions $\mathbf{t}_{j}^{k}\left(
x\right) =\left\{ t_{j,i}^{k}\left( x\right) \right\} _{i=1}^{I}$ by%
\begin{equation*}
t_{j,i}^{k}\left( x\right) \equiv t_{k,i}^{k}\left( x\right) \frac{a_{k,j}}{%
E_{k}},\ \ \ \ \ k<j\leq n,
\end{equation*}%
where the functions $t_{k,i}^{k}\left( x\right) $ are given by applying
Theorem \ref{efs eps} to $E_{k}\left( x\right) =\left\vert \mathbf{t}%
_{k}^{k}\left( x\right) \right\vert ^{2}$, and then set%
\begin{equation*}
X_{k,i}\left( x\right) \equiv \sum_{j=k}^{n}t_{j,i}^{k}\left( x\right) 
\mathbf{e}_{j}
\end{equation*}%
so that%
\begin{eqnarray*}
\sum_{i=1}^{I}t_{k,i}^{k}\left( x\right) t_{j,i}^{k}\left( x\right)
&=&\sum_{i=1}^{I}t_{k,i}^{k}\left( x\right) t_{k,i}^{k}\left( x\right) \frac{%
a_{k,j}}{E_{k}}=\frac{\left\vert \mathbf{t}_{k}^{k}\left( x\right)
\right\vert ^{2}}{E_{k}}a_{k,j}=\frac{a_{k,k}a_{k,j}}{E_{k}}, \\
\text{i.e. }\sum_{i=1}^{I}X_{k,i}\left( x\right) \otimes X_{k,i}\left(
x\right) &=&\left( \sum_{j=k}^{n}\frac{a_{k,j}}{\sqrt{E_{k}}}\mathbf{e}%
_{j}\right) \otimes \left( \sum_{j=k}^{n}\frac{a_{k,j}}{\sqrt{E_{k}}}\mathbf{%
e}_{j}\right) =Z_{k}\otimes Z_{k}.
\end{eqnarray*}

At this point we have obtained our decomposition%
\begin{equation*}
\mathbf{A}=\sum_{k=1}^{p-1}\sum_{i=1}^{I}X_{k,i}X_{k,i}^{\func{tr}}+\mathbf{A%
}_{p}
\end{equation*}%
where $\mathbf{A}_{p}=\left[ 
\begin{array}{cc}
\mathbf{0} & \mathbf{0} \\ 
\mathbf{0} & \mathbf{Q}_{p}%
\end{array}%
\right] $ and $\mathbf{Q}_{p}\in C^{4,2\delta }\left( \mathbb{R}^{M}\right) $
is quasiconformal and $\mathbf{Q}_{p}\sim a_{p,p}\mathbb{I}_{\left(
n-p+1\right) \times \left( n-p+1\right) }$. We also have 
\begin{eqnarray*}
Z_{k}\otimes Z_{k} &=&\sum_{i=1}^{I}X_{k,i}X_{k,i}^{\func{tr}}\in
C^{4,2\delta }\left( \mathbb{R}^{M}\right) ,\ \ \ \ \ 1\leq k\leq p-1 \\
ca_{k,k}\mathbf{e}_{k}\otimes \mathbf{e}_{k} &\prec &Z_{k}Z_{k}^{\func{tr}%
}+\sum_{m=k}^{n}a_{m,m}\mathbf{e}_{m}\otimes \mathbf{e}_{m}\prec C.,\ \ \ \
\ 1\leq k\leq p-1.
\end{eqnarray*}

Thus it remains only to show the inequalities in the second line of (\ref{st}%
), and then that $X_{k,i}\in C^{2,\delta }\left( \mathbb{R}^{M}\right) $.

First, $\left\vert t_{k,i}^{k}\left( x\right) \right\vert \leq \left(
E_{k}\right) ^{\frac{1}{2}}$ follows from the definition of $%
t_{k,i}^{k}\left( x\right) $. Second, from part (1) of Theorem \ref{efs eps}
with $\left\vert \alpha \right\vert =2$ we have%
\begin{equation*}
\left\vert \nabla ^{2}t_{k,i}^{k}\left( x\right) \right\vert \lesssim \left(
E_{k}\right) ^{\frac{\delta ^{2}}{2+\delta }}.
\end{equation*}%
Third, from (\ref{diag hyp}) we have $\left\vert \nabla ^{2}E_{k}\right\vert
\lesssim \left( E_{k}\right) ^{\left[ 1-2\varepsilon \right] _{+}+\delta
^{\prime }}$ and so from (\ref{root control}) we conclude%
\begin{eqnarray*}
\left\vert \nabla t_{k,i}^{k}\left( x\right) \right\vert &\lesssim &\rho
_{E_{k}}^{1+\delta }\leq \max \left\{ \left( E_{k}\right) ^{\frac{1+\delta }{%
4+2\delta }},\left\vert \nabla ^{2}E_{k}\right\vert ^{\frac{1+\delta }{%
2+2\delta }}\right\} \\
&\leq &\max \left\{ \left( E_{k}\right) ^{\frac{1+\delta }{4+2\delta }%
},\left( E_{k}\right) ^{\left( \left[ 1-2\varepsilon \right] _{+}+\delta
^{\prime }\right) \frac{1+\delta }{2+2\delta }}\right\} \leq \left(
E_{k}\right) ^{\frac{1}{2}\left( \left[ 1-2\varepsilon \right] _{+}+\delta
^{\prime \prime \prime }\right) },
\end{eqnarray*}%
where $\delta ^{\prime \prime \prime }=\frac{\delta }{4+2\delta }$, and this
completes the proof of the second line in (\ref{st}).

Now we can show that $X_{k,i}\in C^{2,\delta }\left( \mathbb{R}^{M}\right) $
using the product formula (\ref{pdt form}) together with the inequalities in
the second line of (\ref{st}) that we just proved. Indeed, we have%
\begin{equation*}
D^{\mu }t_{j,i}^{k}=D^{\mu }\left( t_{k,i}^{k}a_{k,j}\frac{1}{E_{k}}\right)
=\sum_{\alpha +\beta +\gamma =\mu }c_{\alpha ,\beta ,\gamma }^{\mu }\left(
D^{\alpha }t_{k,i}^{k}\right) \left( D^{\beta }a_{k,j}\right) \left(
D^{\gamma }\frac{1}{E_{k}}\right) ,
\end{equation*}%
where 
\begin{eqnarray*}
\left\vert t_{k,i}^{k}\left( x\right) \right\vert &\leq &\left( E_{k}\right)
^{\frac{1}{2}},\ \ \ \left\vert \nabla t_{k,i}^{k}\left( x\right)
\right\vert \lesssim \left( E_{k}\right) ^{\frac{1}{2}\left( \left[
1-2\varepsilon \right] _{+}+\delta ^{\prime \prime \prime }\right) },\ \ \
\left\vert \nabla ^{2}t_{k,i}^{k}\left( x\right) \right\vert \lesssim \left(
E_{k}\right) ^{\frac{\delta ^{2}}{2+\delta }} \\
\left\vert D^{\beta }a_{k,j}\right\vert &\lesssim &\left( E_{j}\right) ^{ 
\left[ \frac{1}{2}+\left( 2-\left\vert \beta \right\vert \right) \varepsilon %
\right] _{+}+\delta ^{\prime \prime }},\ \ \ \ \ 1\leq k<j\leq p-1, \\
\left\vert D^{\beta }a_{k,j}\right\vert &\lesssim &\left( E_{k}\right) ^{ 
\left[ \frac{1}{2}+\left( 2-\left\vert \beta \right\vert \right) \varepsilon %
\right] _{+}+\delta ^{\prime \prime }},\ \ \ \ \ 1\leq k\leq p-1<j, \\
\left\vert D^{\gamma }\frac{1}{E_{k}}\right\vert &\lesssim &\left\vert
E_{k}\right\vert ^{-1-\left\vert \gamma \right\vert \varepsilon +\left\vert
\gamma \right\vert \delta ^{\prime }},
\end{eqnarray*}%
for $0\leq \left\vert \alpha \right\vert ,\left\vert \beta \right\vert
,\left\vert \gamma \right\vert \leq 2$. We now have the following estimates
when $1\leq k<j\leq p-1$, where we treat the cases $\left\vert \alpha
\right\vert =0,1,2$ separately due to the unorthodox form of the estimates
for $\left\vert \nabla ^{2}t_{k,i}^{k}\left( x\right) \right\vert $. The
presence of $\delta ^{\prime \prime }>0$ will now play a crucial role.

When $\left\vert \alpha \right\vert =0$ we have 
\begin{eqnarray*}
\left\vert \left( D^{\alpha }t_{k,i}^{k}\right) \left( D^{\beta
}a_{k,j}\right) \left( D^{\gamma }\frac{1}{E_{k}}\right) \right\vert
&\lesssim &\left( E_{k}\right) ^{\frac{1}{2}}\ \left( E_{j}\right) ^{\left[ 
\frac{1}{2}+\left( 2-\left\vert \beta \right\vert \right) \varepsilon \right]
_{+}+\delta ^{\prime \prime }}\ \left\vert E_{k}\right\vert ^{-1-\left\vert
\gamma \right\vert \varepsilon +\left\vert \gamma \right\vert \delta
^{\prime }} \\
&\lesssim &\left( E_{k}\right) ^{\frac{1}{2}+\left[ \frac{1}{2}+\left(
2-\left\vert \beta \right\vert \right) \varepsilon \right] _{+}+\delta
^{\prime \prime }-1-\left\vert \gamma \right\vert \varepsilon +\left\vert
\gamma \right\vert \delta ^{\prime }},
\end{eqnarray*}%
which is bounded because the exponent of $E_{k}$ is 
\begin{eqnarray*}
&&\frac{1}{2}+\frac{1}{2}+\left( 2-\left\vert \beta \right\vert \right)
\varepsilon +\delta ^{\prime \prime }-1-\left\vert \gamma \right\vert
\varepsilon +\left\vert \gamma \right\vert \delta ^{\prime } \\
&=&\left( 2-\left\vert \beta \right\vert -\left\vert \gamma \right\vert
\right) \varepsilon +\left\vert \gamma \right\vert \delta ^{\prime }+\delta
^{\prime \prime }=\delta ^{\prime \prime }+\left\vert \gamma \right\vert
\delta ^{\prime }\geq \delta ^{\prime \prime }.
\end{eqnarray*}

When $\left\vert \alpha \right\vert =1$ we have the estimate%
\begin{equation*}
\left\vert \left( D^{\alpha }t_{k,i}^{k}\right) \left( D^{\beta
}a_{k,j}\right) \left( D^{\gamma }\frac{1}{E_{k}}\right) \right\vert
\lesssim \left( E_{k}\right) ^{\left( \left[ 1-2\varepsilon \right]
_{+}+\delta ^{\prime \prime \prime }\right) \frac{1+\delta }{2+2\delta }}\
\left( E_{k}\right) ^{\left[ \frac{1}{2}+\varepsilon \right] _{+}+\delta
^{\prime \prime }}\ \left( E_{k}\right) ^{-1},
\end{equation*}%
since the worst case is when $\left\vert \beta \right\vert =1$. But this is
bounded since%
\begin{equation*}
\left( \left[ 1-2\varepsilon \right] _{+}+\delta ^{\prime \prime \prime
}\right) \frac{1}{2}+\left[ \frac{1}{2}+\varepsilon \right] _{+}+\delta
^{\prime \prime }-1\geq \delta ^{\prime \prime }+\frac{\delta ^{\prime
\prime \prime }}{2},\ \ \ \ \ \text{for }\frac{1}{4}\leq \varepsilon <1.
\end{equation*}%
Indeed, this is clear when $\varepsilon \geq \frac{1}{2}$, and when $\frac{1%
}{4}\leq \varepsilon <\frac{1}{2}$, we have%
\begin{eqnarray*}
&&\left( \left[ 1-2\varepsilon \right] _{+}+\delta ^{\prime \prime \prime
}\right) \frac{1}{2}+\left[ \frac{1}{2}+\varepsilon \right] _{+}+\delta
^{\prime \prime }-1 \\
&=&\delta ^{\prime \prime }+\left( 1-2\varepsilon +\delta ^{\prime \prime
\prime }\right) \frac{1}{2}+\frac{1}{2}+\varepsilon -1=\delta ^{\prime
\prime }+\frac{\delta ^{\prime \prime \prime }}{2}.
\end{eqnarray*}

When $\left\vert \alpha \right\vert =2$ we have the estimate%
\begin{eqnarray*}
\left\vert \left( D^{\alpha }t_{k,i}^{k}\right) \left( D^{\beta
}a_{k,j}\right) \left( D^{\gamma }\frac{1}{E_{k}}\right) \right\vert
&\lesssim &\left( E_{k}\right) ^{\frac{\delta ^{2}}{2+\delta }}\left(
E_{k}\right) ^{\left[ \frac{1}{2}+2\varepsilon \right] _{+}+\delta ^{\prime
\prime }}\left( E_{k}\right) ^{-1} \\
&=&\left( E_{k}\right) ^{\frac{\delta ^{2}}{2+\delta }+\frac{1}{2}%
+2\varepsilon +\delta ^{\prime \prime }-1}\leq \left( E_{k}\right) ^{\delta
^{\prime \prime }+\frac{\delta ^{2}}{2+\delta }},
\end{eqnarray*}%
when $\varepsilon \geq \frac{1}{4}$.

Using the subproduct and subchain rules for the functional $\left[ h\right]
_{\mu ,\delta ^{\prime \prime }}$ in $\mathbb{R}^{M}$ as in (\ref{def mod D}%
) above, we claim that%
\begin{equation*}
\left[ t_{j,i}^{k}\right] _{\mu ,\delta ^{\prime \prime }}\lesssim 1\ \ \ \
\ \text{for }\left\vert \mu \right\vert =2.
\end{equation*}%
To see this, consider the "worst" expression above, 
\begin{equation*}
H\equiv \left( t_{k,i}^{k}\right) \left( D^{\beta }a_{k,j}\right) \frac{1}{%
E_{k}},\ \ \ \ \text{where }\left\vert \beta \right\vert =2,
\end{equation*}%
in which the exponent of $E_{k}$ in the estimate for $H$ vanishes if $\delta
^{\prime \prime }=0$.

\textbf{Case }$E_{k}\left( y\right) \geq E_{k}\left( z\right) $: In this
case we write%
\begin{eqnarray*}
H\left( y\right) -H\left( z\right) &=&\frac{t_{k,i}^{k}\left( y\right) }{%
E_{k}\left( y\right) }D^{\beta }a_{k,j}\left( y\right) -\frac{%
t_{k,i}^{k}\left( z\right) }{E_{k}\left( z\right) }D^{\beta }a_{k,j}\left(
z\right) \\
&=&\frac{t_{k,i}^{k}\left( y\right) }{E_{k}\left( y\right) }\left( D^{\beta
}a_{k,j}\left( y\right) -D^{\beta }a_{k,j}\left( z\right) \right) +\left( 
\frac{t_{k,i}^{k}\left( y\right) }{E_{k}\left( y\right) }-\frac{%
t_{k,i}^{k}\left( z\right) }{E_{k}\left( z\right) }\right) D^{\beta
}a_{k,j}\left( z\right) \\
&\equiv &I\left( y,z\right) +II\left( y,z\right) .
\end{eqnarray*}%
We estimate term $I\left( y,z\right) $ by considering two cases separately
depending on the separation between $y$ and $z$.

\textbf{Subcase }$\left\vert y-z\right\vert ^{\delta }\geq E_{k}\left(
y\right) ^{\delta ^{\prime \prime }}$: We estimate crudely in this case to
obtain%
\begin{equation*}
\frac{\left\vert I\left( y,z\right) \right\vert }{\left\vert y-z\right\vert
^{\delta }}\lesssim \frac{E_{k}\left( y\right) ^{\frac{1}{2}}}{E_{k}\left(
y\right) }\frac{E_{k}\left( y\right) ^{\frac{1}{2}+\delta ^{\prime \prime
}}+E_{k}\left( z\right) ^{\frac{1}{2}+\delta ^{\prime \prime }}}{E_{k}\left(
y\right) ^{\delta ^{\prime \prime }}}\lesssim \frac{E_{k}\left( y\right)
^{\delta ^{\prime \prime }}}{E_{k}\left( y\right) ^{\delta ^{\prime \prime }}%
}=1.
\end{equation*}%
On the other hand,%
\begin{eqnarray*}
\frac{\left\vert II\left( y,z\right) \right\vert }{\left\vert y-z\right\vert
^{\delta }} &\lesssim &\left( \left\vert \frac{t_{k,i}^{k}\left( y\right) }{%
E_{k}\left( y\right) }\right\vert +\left\vert \frac{t_{k,i}^{k}\left(
z\right) }{E_{k}\left( z\right) }\right\vert \right) \frac{\left\vert
D^{\beta }a_{k,j}\left( z\right) \right\vert }{E_{k}\left( y\right) ^{\delta
^{\prime \prime }}} \\
&\lesssim &\left( \frac{1}{E_{k}\left( y\right) ^{\frac{1}{2}}}+\frac{1}{%
E_{k}\left( z\right) ^{\frac{1}{2}}}\right) \frac{E_{k}\left( z\right) ^{%
\frac{1}{2}+\delta ^{\prime \prime }}}{E_{k}\left( y\right) ^{\delta
^{\prime \prime }}} \\
&\lesssim &\left( \frac{1}{E_{k}\left( z\right) ^{\frac{1}{2}}}\right) \frac{%
E_{k}\left( z\right) ^{\frac{1}{2}+\delta ^{\prime \prime }}}{E_{k}\left(
y\right) ^{\delta ^{\prime \prime }}}\leq 1.
\end{eqnarray*}

\textbf{Subcase }$\left\vert y-z\right\vert ^{\delta }<E_{k}\left( y\right)
^{\delta ^{\prime \prime }}$: In this case we apply the submean value
theorem to the difference $D^{\beta }a_{k,j}\left( y\right) -D^{\beta
}a_{k,j}\left( z\right) $ to obtain the estimate%
\begin{equation*}
\left\vert D^{\beta }a_{k,j}\left( y\right) -D^{\beta }a_{k,j}\left(
z\right) \right\vert \leq \left\vert D^{\gamma }a_{k,j}\left( \left(
1-\theta \right) y+\theta z\right) \right\vert \left\vert y-z\right\vert
\lesssim \left\vert y-z\right\vert ,
\end{equation*}%
since $\left\vert \gamma \right\vert =3$ implies $\left\vert D^{\gamma
}a_{k,j}\right\vert $ is bounded. Moreover, the submean value theorem
applied to $E_{k}$ yields%
\begin{eqnarray*}
\left\vert E_{k}\left( y\right) -E_{k}\left( z\right) \right\vert &\leq
&\left\vert DE_{k}\left( \left( 1-\theta \right) y+\theta z\right)
\right\vert \left\vert y-z\right\vert \lesssim \left\vert y-z\right\vert \\
&\leq &E_{k}\left( y\right) ^{\frac{\delta ^{\prime \prime }}{\delta }},
\end{eqnarray*}%
and since $\delta ^{\prime \prime }>\delta $, we conclude that $E_{k}\left(
y\right) \approx E_{k}\left( z\right) $ for $y$ and $z$ sufficiently close
to the origin, depending on the ratio $\frac{\delta ^{\prime \prime }}{%
\delta }>1$. Plugging all of this into the estimate for $\frac{\left\vert
I\left( y,z\right) \right\vert }{\left\vert y-z\right\vert ^{\delta }}$ gives%
\begin{equation*}
\frac{\left\vert I\left( y,z\right) \right\vert }{\left\vert y-z\right\vert
^{\delta }}\lesssim \frac{E_{k}\left( y\right) ^{\frac{1}{2}}}{E_{k}\left(
y\right) }\left\vert y-z\right\vert ^{1-\delta }\lesssim \frac{E_{k}\left(
y\right) ^{\frac{1}{2}}}{E_{k}\left( y\right) }\left( E_{k}\left( y\right)
^{\delta ^{\prime \prime }}\right) ^{\frac{1-\delta }{\delta }}\lesssim
E_{k}\left( y\right) ^{\delta ^{\prime \prime }\frac{1-\delta }{\delta }-%
\frac{1}{2}},
\end{equation*}%
which is bounded because $\delta ^{\prime \prime }>\delta $ implies $\delta
^{\prime \prime }\frac{1-\delta }{\delta }-\frac{1}{2}>1-\delta -\frac{1}{2}%
\geq 0$ if $0<\delta \leq \frac{1}{2}$. On the other hand,%
\begin{eqnarray*}
\frac{\left\vert II\left( y,z\right) \right\vert }{\left\vert y-z\right\vert
^{\delta }} &\lesssim &\left\vert \frac{t_{k,i}^{k}\left( y\right) }{%
E_{k}\left( y\right) }-\frac{t_{k,i}^{k}\left( z\right) }{E_{k}\left(
z\right) }\right\vert \frac{\left\vert D^{\beta }a_{k,j}\left( z\right)
\right\vert }{\left\vert y-z\right\vert ^{\delta }} \\
&\lesssim &\left\vert \frac{E_{k}\left( z\right) t_{k,i}^{k}\left( y\right)
\pm E_{k}\left( z\right) t_{k,i}^{k}\left( z\right) -t_{k,i}^{k}\left(
z\right) E_{k}\left( y\right) }{E_{k}\left( y\right) E_{k}\left( z\right) }%
\right\vert \frac{E_{k}\left( z\right) ^{\frac{1}{2}+\delta ^{\prime \prime
}}}{\left\vert y-z\right\vert ^{\delta }} \\
&\lesssim &\left( \frac{E_{k}\left( z\right) \left\vert t_{k,i}^{k}\left(
y\right) -t_{k,i}^{k}\left( z\right) \right\vert +\left\vert E_{k}\left(
z\right) -E_{k}\left( y\right) \right\vert \left\vert t_{k,i}^{k}\left(
z\right) \right\vert }{E_{k}\left( y\right) E_{k}\left( z\right) }\right) 
\frac{E_{k}\left( z\right) ^{\frac{1}{2}+\delta ^{\prime \prime }}}{%
\left\vert y-z\right\vert ^{\delta }}
\end{eqnarray*}%
and so using the mean value theorem, 
\begin{eqnarray*}
\frac{\left\vert II\left( y,z\right) \right\vert }{\left\vert y-z\right\vert
^{\delta }} &\lesssim &\left( \frac{E_{k}\left( z\right) \left\vert
y-z\right\vert +\left\vert y-z\right\vert E_{k}\left( z\right) ^{\frac{1}{2}}%
}{E_{k}\left( y\right) E_{k}\left( z\right) }\right) \frac{E_{k}\left(
z\right) ^{\frac{1}{2}+\delta ^{\prime \prime }}}{\left\vert y-z\right\vert
^{\delta }} \\
&\lesssim &\left( \frac{\left\vert y-z\right\vert E_{k}\left( z\right) ^{%
\frac{1}{2}}}{E_{k}\left( y\right) E_{k}\left( z\right) }\right) \frac{%
E_{k}\left( z\right) ^{\frac{1}{2}+\delta ^{\prime \prime }}}{\left\vert
y-z\right\vert ^{\delta }}\approx \left\vert y-z\right\vert ^{1-\delta
}E_{k}\left( y\right) ^{\delta ^{\prime \prime }-1} \\
&=&\left\vert y-z\right\vert ^{\delta \left( \frac{1-\delta }{\delta }%
\right) }E_{k}\left( y\right) ^{\delta ^{\prime \prime }-1}<E_{k}\left(
y\right) ^{\delta ^{\prime \prime }\left( \frac{1-\delta }{\delta }\right)
}E_{k}\left( y\right) ^{\delta ^{\prime \prime }-1}\lesssim 1,
\end{eqnarray*}%
since $\left\vert y-z\right\vert ^{\delta }<E_{k}\left( y\right) ^{\delta
^{\prime \prime }}$ and $\delta ^{\prime \prime }>\delta $.

\textbf{Case }$E_{k}\left( z\right) ^{\delta ^{\prime \prime }}\geq
E_{k}\left( y\right) ^{\delta ^{\prime \prime }}$: This case is similar to
the previous case.

Finally, if in addition $\mathbf{A}$ is subordinate, then $\mathbf{Q}_{p}$
is subordinate by Lemma \ref{comp and sub}. This completes the proof of
Theorem \ref{final n Grushin}.
\end{proof}

\section{Counterexamples for sums of squares of matrix functions}

Consider the $3\times 3$ matrix of quadratic homogeneous polynomials in
three variables as in \cite[Example 6]{HiNi},%
\begin{equation*}
\mathbf{Q}\left( x,y,z\right) \equiv \left[ 
\begin{array}{ccc}
x^{2}+2z^{2} & -xy & -xz \\ 
-xy & y^{2}+2x^{2} & -yz \\ 
-xz & -yz & z^{2}+2y^{2}%
\end{array}%
\right] .
\end{equation*}%
The dehomogenization of this form is%
\begin{equation*}
\mathbf{Q}\left( x,y\right) \equiv \left[ 
\begin{array}{ccc}
x^{2}+2 & -xy & -x \\ 
-xy & y^{2}+2x^{2} & -y \\ 
-x & -y & 1+2y^{2}%
\end{array}%
\right] .
\end{equation*}%
It was shown in \cite{Cho} that $\mathbf{Q}$ is not a sum of squares of
polynomials. However, the quadratic matrix form $\mathbf{Q}\left(
x,y,z\right) $ is not elliptical since its determinant vanishes on the union
of the three coordinate axes, 
\begin{equation*}
\det \mathbf{Q}\left( x,y,z\right) =4\left(
x^{4}y^{2}+y^{4}z^{2}+z^{4}x^{2}+x^{2}y^{2}z^{2}\right) .
\end{equation*}%
Here we modify $\mathbf{Q}\left( x,y,z\right) $ so that it is diagonally
elliptical, i.e. its determinant vanishes only at the origin, yet still
cannot be written as a sum of squares.

\subsection{A positive quadratic matrix form that is not a sum of squares of
forms}

We prove the following theorem.

\begin{theorem}
\label{Cho's thm}If $0<\lambda <\frac{2}{81}$, then the quadratic matrix
form 
\begin{equation*}
\mathbf{Q}_{\lambda }\left( x,y,z\right) \equiv \left[ 
\begin{array}{ccc}
x^{2}+\lambda y^{2}+2z^{2} & -xy & -xz \\ 
-xy & y^{2}+\lambda z^{2}+2x^{2} & -yz \\ 
-xz & -yz & z^{2}+\lambda x^{2}+2y^{2}%
\end{array}%
\right]
\end{equation*}%
is both positive definite away from the origin, and \emph{not} a sum of
squares of linear matrix polynomials.
\end{theorem}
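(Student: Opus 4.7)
My plan is to split the argument into (i) strict positive definiteness of $\mathbf{Q}_\lambda$ off the origin, and (ii) the SOS obstruction. The first part is essentially a perturbation of Choi's original example. Writing $\mathbf{Q}_\lambda = \mathbf{Q} + \lambda\,\mathrm{diag}(y^2, z^2, x^2)$, I would first expand the determinant to obtain $\det \mathbf{Q}(x,y,z) = 4(x^4 y^2 + y^4 z^2 + z^4 x^2 + x^2 y^2 z^2)$, a sum of nonnegative terms vanishing exactly on the three coordinate axes. Together with the easy check that the $2\times 2$ principal minors of $\mathbf{Q}$ are manifestly nonnegative, this shows $\mathbf{Q}$ is positive semidefinite on $\mathbb{R}^{3}$ and strictly positive definite off the axes, so $\mathbf{Q}_\lambda$ inherits strict positivity off the axes. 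On each axis the perturbation then supplies the missing positive eigendirection; for instance $\mathbf{Q}_\lambda(x, 0, 0) = \mathrm{diag}(x^2, 2x^2, \lambda x^2) \succ 0$ for $x \neq 0$, and the cyclic symmetry handles the other two axes.

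For the SOS failure, I would assume toward contradiction that $\mathbf{Q}_\lambda = \sum_{j=1}^{N} X_j X_j^{\func{tr}}$ with each $X_j = (X_j^1, X_j^2, X_j^3)^{\func{tr}}$ a column of linear homogeneous polynomials in $x,y,z$, and extract the coefficients by writing
\begin{eqnarray*}
X_j^1 &=& a_j x + b_j y + c_j z, \\
X_j^2 &=& e_j x + d_j y + h_j z, \\
X_j^3 &=& k_j x + g_j y + f_j z.
\end{eqnarray*}
Matching the coefficients of $x^2, y^2, z^2$ in the diagonal entries of $\mathbf{Q}_\lambda$ then yields nine vectors in $\mathbb{R}^N$ with norms
\begin{equation*}
|a|^2 = |d|^2 = |f|^2 = 1, \quad |c|^2 = |e|^2 = |g|^2 = 2, \quad |b|^2 = |h|^2 = |k|^2 = \lambda,
\end{equation*}
together with pairwise orthogonality within each of the triples $\{a,b,c\}$, $\{d,e,h\}$, $\{f,g,k\}$. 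Reading off the $xy$-, $xz$- and $yz$-coefficients of the three off-diagonal entries $-xy$, $-xz$, $-yz$ then produces the three key identities
\begin{equation*}
a \cdot d + b \cdot e = -1, \quad a \cdot f + c \cdot k = -1, \quad d \cdot f + h \cdot g = -1.
\end{equation*}

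The heart of the argument is that the ``small'' inner products $b \cdot e$, $c \cdot k$, $h \cdot g$ are bounded by $\sqrt{2\lambda}$ via Cauchy--Schwarz, forcing $a \cdot d, \, a \cdot f, \, d \cdot f \leq -1 + \sqrt{2\lambda}$. From $|a+d|^2 = 2 + 2\,a \cdot d$ and the two analogues, each of $|a+d|$, $|a+f|$, $|d+f|$ is bounded by $2^{3/4} \lambda^{1/4}$. Rewriting $2a = (a+d) + (a+f) - (d+f)$ and applying the triangle inequality yields
\begin{equation*}
2 = |2a| \leq |a+d| + |a+f| + |d+f| \leq 3 \cdot 2^{3/4}\, \lambda^{1/4},
\end{equation*}
which rearranges to $\lambda \geq 2/81$, contradicting the hypothesis $\lambda < 2/81$.

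The main obstacle I anticipate is the bookkeeping in the coefficient matching step: keeping track of nine scalar vectors, three sets of orthogonality relations, and the precise pairings in the off-diagonal matching is the only place where one could easily slip. Once the three off-diagonal identities are correctly extracted, the geometry of three nearly anti-parallel unit vectors in $\mathbb{R}^N$ delivers the sharp threshold $2/81$ essentially automatically, and in the limit $\lambda \searrow 0$ the argument collapses to Choi's original proof that $\mathbf{Q}$ is not SOS, which corresponds to forcing $a = -d$, $a = -f$ and $d = -f$ simultaneously and hence $a = 0$, contradicting $|a| = 1$.
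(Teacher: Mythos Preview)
Your proposal is correct and follows essentially the same argument as the paper: the same coefficient-matching to produce three unit vectors with pairwise inner products at most $-1+\sqrt{2\lambda}$, and the same decomposition $2a=(a+d)+(a+f)-(d+f)$ to force $\lambda\ge 2/81$. The only cosmetic differences are that the paper dehomogenizes (sets $z=1$) rather than working with the homogeneous form, and at the final step it uses $|2a|^{2}\le 3\bigl(|a+d|^{2}+|a+f|^{2}+|d+f|^{2}\bigr)$ instead of your triangle inequality, which yields the identical threshold; for positive definiteness the paper computes the leading principal minors of $\mathbf{Q}_{\lambda}$ directly rather than your perturbation-from-$\mathbf{Q}$ argument, but both are equally valid.
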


\begin{proof}
Suppose $0<\lambda <\frac{2}{81}$. The quadratic matrix form $\mathbf{Q}%
_{\lambda }\left( x,y,z\right) $ is positive definite for all $\left(
x,y,z\right) \neq \left( 0,0,0\right) $. Indeed, for $\lambda >0$, the top
left entry of $\mathbf{Q}_{\lambda }\left( x,y,z\right) $ satisfies 
\begin{equation*}
x^{2}+\lambda y^{2}+2z^{2}\geq \min \left\{ \lambda ,1\right\} \left(
x^{2}+y^{2}+z^{2}\right) ,
\end{equation*}%
the top left $2\times 2$ minor of $\mathbf{Q}_{\lambda }\left( x,y,z\right) $
satisfies%
\begin{eqnarray*}
&&\det \left[ 
\begin{array}{cc}
x^{2}+\lambda y^{2}+2z^{2} & -xy \\ 
-xy & y^{2}+\lambda z^{2}+2x^{2}%
\end{array}%
\right] \\
&=&\lambda ^{2}\left\{ y^{2}+z^{2}\right\} +\lambda \left\{ z^{2}\left(
x^{2}+2z^{2}\right) +y^{2}\left( y^{2}+2x^{2}\right) \right\} \\
&&+\left( x^{2}+2z^{2}\right) \left( y^{2}+2x^{2}\right) -x^{2}y^{2} \\
&\geq &2x^{4}+\lambda \left( y^{4}+2z^{4}\right) \geq \min \left\{ \lambda
,2\right\} \left( x^{4}+y^{4}+z^{4}\right) ,
\end{eqnarray*}%
and the determinant of $\mathbf{Q}_{\lambda }\left( x,y,z\right) $ satisfies%
\begin{eqnarray*}
\det \mathbf{Q}_{\lambda }\left( x,y,z\right) &=&\lambda ^{3}\left(
x^{2}y^{2}z^{2}\right) +\lambda ^{2}\left\{ 2\left(
x^{2}z^{4}+z^{2}y^{4}+y^{2}x^{4}\right)
+x^{2}y^{4}+y^{2}z^{4}+z^{2}x^{4}\right\} \\
&&+2\lambda \left\{ x^{6}+y^{6}+z^{6}+2\left(
x^{2}y^{4}+y^{2}z^{4}+z^{2}x^{4}\right) +3x^{2}y^{2}z^{2}\right\} \\
&&+4\left( x^{2}z^{4}+z^{2}y^{4}+y^{2}x^{4}+x^{2}y^{2}z^{2}\right) \\
&\geq &2\lambda \left( x^{6}+y^{6}+z^{6}\right) .
\end{eqnarray*}

Now assume, in order to derive a contradiction, that the dehomogenization 
\begin{equation*}
\mathbf{Q}_{\lambda }\left( x,y\right) \equiv \mathbf{Q}_{\lambda }\left(
x,y,1\right) =\left[ 
\begin{array}{ccc}
x^{2}+\lambda y^{2}+2 & -xy & -x \\ 
-xy & y^{2}+\lambda +2x^{2} & -y \\ 
-x & -y & 1+\lambda x^{2}+2y^{2}%
\end{array}%
\right]
\end{equation*}%
is a sum of squares of dyads of degree one, i.e.%
\begin{equation*}
\mathbf{Q}_{\lambda }\left( x,y\right) =\sum_{\ell =1}^{L}\mathbf{v}^{\ell
}\left( x,y\right) \otimes \mathbf{v}^{\ell }\left( x,y\right) =\sum_{\ell
=1}^{L}\left[ 
\begin{array}{ccc}
m_{11}^{\ell } & m_{12}^{\ell } & m_{13}^{\ell } \\ 
m_{21}^{\ell } & m_{22}^{\ell } & m_{23}^{\ell } \\ 
m_{31}^{\ell } & m_{32}^{\ell } & m_{33}^{\ell }%
\end{array}%
\right] \left( 
\begin{array}{c}
x \\ 
y \\ 
1%
\end{array}%
\right) \left( 
\begin{array}{ccc}
x & y & 1%
\end{array}%
\right) \left[ 
\begin{array}{ccc}
m_{11}^{\ell } & m_{21}^{\ell } & m_{31}^{\ell } \\ 
m_{12}^{\ell } & m_{22}^{\ell } & m_{32}^{\ell } \\ 
m_{13}^{\ell } & m_{23}^{\ell } & m_{33}^{\ell }%
\end{array}%
\right] .
\end{equation*}%
Then with $\mathbf{m}_{ij}\equiv \left( m_{ij}^{\ell }\right) _{\ell =1}^{L}$%
, we have upon equating the two formulas for $\mathbf{Q}_{\lambda }\left(
x,y\right) $ that%
\begin{eqnarray*}
\left\vert \mathbf{m}_{11}\right\vert ^{2} &=&\left\vert \mathbf{m}%
_{22}\right\vert ^{2}=\left\vert \mathbf{m}_{33}\right\vert ^{2}=1, \\
\left\vert \mathbf{m}_{13}\right\vert ^{2} &=&\left\vert \mathbf{m}%
_{21}\right\vert ^{2}=\left\vert \mathbf{m}_{32}\right\vert ^{2}=2, \\
\left\vert \mathbf{m}_{12}\right\vert ^{2} &=&\left\vert \mathbf{m}%
_{23}\right\vert ^{2}=\left\vert \mathbf{m}_{31}\right\vert ^{2}=\lambda ,
\end{eqnarray*}%
and%
\begin{equation*}
\mathbf{m}_{22}\cdot \mathbf{m}_{11}+\mathbf{m}_{21}\cdot \mathbf{m}_{12}=%
\mathbf{m}_{33}\cdot \mathbf{m}_{11}+\mathbf{m}_{31}\cdot \mathbf{m}_{13}=%
\mathbf{m}_{33}\cdot \mathbf{m}_{22}+\mathbf{m}_{32}\cdot \mathbf{m}_{23}=-1.
\end{equation*}%
Thus we conclude that%
\begin{eqnarray*}
\left\vert \mathbf{m}_{11}+\mathbf{m}_{22}\right\vert ^{2} &=&\left\vert 
\mathbf{m}_{11}\right\vert ^{2}+2\mathbf{m}_{11}\cdot \mathbf{m}%
_{22}+\left\vert \mathbf{m}_{22}\right\vert ^{2}=2+2\left( -1-\mathbf{m}%
_{21}\cdot \mathbf{m}_{12}\right) =-2\mathbf{m}_{21}\cdot \mathbf{m}_{12}, \\
\left\vert \mathbf{m}_{22}+\mathbf{m}_{33}\right\vert ^{2} &=&\left\vert 
\mathbf{m}_{22}\right\vert ^{2}+2\mathbf{m}_{22}\cdot \mathbf{m}%
_{33}+\left\vert \mathbf{m}_{33}\right\vert ^{2}=2+2\left( -1-\mathbf{m}%
_{32}\cdot \mathbf{m}_{23}\right) =-2\mathbf{m}_{32}\cdot \mathbf{m}_{23}, \\
\left\vert \mathbf{m}_{33}+\mathbf{m}_{11}\right\vert ^{2} &=&\left\vert 
\mathbf{m}_{33}\right\vert ^{2}+2\mathbf{m}_{33}\cdot \mathbf{m}%
_{11}+\left\vert \mathbf{m}_{11}\right\vert ^{2}=2+2\left( -1-\mathbf{m}%
_{31}\cdot \mathbf{m}_{13}\right) =-2\mathbf{m}_{31}\cdot \mathbf{m}_{13},
\end{eqnarray*}%
and hence%
\begin{eqnarray*}
4 &=&\left\vert 2\mathbf{m}_{11}\right\vert ^{2}=\left\vert \left( \mathbf{m}%
_{11}+\mathbf{m}_{22}\right) -\left( \mathbf{m}_{22}+\mathbf{m}_{33}\right)
+\left( \mathbf{m}_{33}+\mathbf{m}_{11}\right) \right\vert ^{2} \\
&\leq &3\left( \left\vert \mathbf{m}_{11}+\mathbf{m}_{22}\right\vert
^{2}+\left\vert \mathbf{m}_{22}+\mathbf{m}_{33}\right\vert ^{2}+\left\vert 
\mathbf{m}_{33}+\mathbf{m}_{11}\right\vert ^{2}\right) \\
&=&-6\left( \mathbf{m}_{21}\cdot \mathbf{m}_{12}+\mathbf{m}_{32}\cdot 
\mathbf{m}_{23}+\mathbf{m}_{31}\cdot \mathbf{m}_{13}\right) \\
&\leq &6\left( \sqrt{2}\sqrt{\lambda }+\sqrt{2}\sqrt{\lambda }+\sqrt{\lambda 
}\sqrt{2}\right) =18\sqrt{2\lambda }<4,
\end{eqnarray*}%
if $0<\lambda <\frac{2}{81}$, which is the desired contradiction.
\end{proof}

\subsection{A matrix-valued smooth function not a finite sum of vector $C^{1,%
\protect\alpha }$ squares}

Now suppose that $\mathbf{P}_{\lambda }\left( x,y,z\right) =\mathbf{Q}%
_{\lambda }\left( x,y,z\right) +O\left( r^{2+\alpha }\right) $, where $r=%
\sqrt{x^{2}+y^{2}+z^{2}}$. Then if $\mathbf{P}_{\lambda }$ is a sum of $%
C^{1,\alpha }$ squares,%
\begin{equation*}
\mathbf{P}_{\lambda }\left( x,y,z\right) =\sum_{\ell =1}^{L}\mathbf{u}^{\ell
}\left( x,y,z\right) \otimes \mathbf{u}^{\ell }\left( x,y,z\right) ,\ \ \ \
\ \text{where }\mathbf{u}^{\ell }\left( x,y,z\right) \in C^{1,\alpha },
\end{equation*}%
Taylor's theorem shows that%
\begin{equation*}
\mathbf{u}^{\ell }\left( x,y,z\right) =\mathbf{v}^{\ell }\left( x,y,z\right)
+O\left( r^{1+\alpha }\right) ,\ \ \ \ \ \text{where }\mathbf{v}^{\ell }%
\text{ is a linear form},
\end{equation*}%
and so 
\begin{eqnarray*}
\mathbf{Q}_{\lambda }\left( x,y,z\right) +O\left( r^{2+\alpha }\right) &=&%
\mathbf{P}_{\lambda }\left( x,y,z\right) \\
&=&\sum_{\ell =1}^{L}\left( \mathbf{v}^{\ell }\left( x,y,z\right) +O\left(
r^{1+\alpha }\right) \right) \otimes \left( \mathbf{v}^{\ell }\left(
x,y,z\right) +O\left( r^{1+\alpha }\right) \right) \\
&=&\sum_{\ell =1}^{L}\mathbf{v}^{\ell }\left( x,y,z\right) \otimes \mathbf{v}%
^{\ell }\left( x,y,z\right) +O\left( r^{2+\alpha }\right) ,\ \ \ \ \ \text{%
since }\mathbf{v}^{\ell }\left( x,y,z\right) =O\left( r\right) ,
\end{eqnarray*}%
implies that%
\begin{equation*}
\mathbf{Q}_{\lambda }\left( x,y,z\right) =\sum_{\ell =1}^{L}\mathbf{v}^{\ell
}\left( x,y,z\right) \otimes \mathbf{v}^{\ell }\left( x,y,z\right) ,
\end{equation*}%
the desired contradiction. Since $\mathbf{Q}_{\lambda }\left( x,y,z\right) $
is obviously subordinate, we have established the following.

\begin{theorem}
\label{C 1 delta}There is a subordinate, diagonally elliptical $3\times 3$
matrix-valued quadratic polynomial function of three variables, e.g. $%
\mathbf{Q}_{\lambda }\left( x,y,z\right) $ with $0<\lambda <\frac{2}{81}$,
that cannot be written as a finite sum of squares of $C^{1,\delta }$ vector
functions for any $\delta >0$.
\end{theorem}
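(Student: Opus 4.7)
My plan is to use Theorem \ref{Cho's thm} (Cho's rigidity for the quadratic form $\mathbf{Q}_\lambda$) as the algebraic core of the argument, and reduce the $C^{1,\delta}$ case to the polynomial case by a Taylor expansion at the origin. Take $\mathbf{Q}_\lambda(x,y,z)$ with $0<\lambda<\tfrac{2}{81}$ as the candidate counterexample. I will first verify the structural properties (diagonally elliptical, subordinate); these are essentially built into the example. Diagonal ellipticity follows at once from the principal-minor lower bounds $\gtrsim r^{2k}$ already computed in the proof of Theorem \ref{Cho's thm} (and thus positive definiteness away from $0$, together with visible comparability to $\mathbf{D}=\mathrm{diag}(r^2,r^2,r^2)$). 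For subordinaticity, Theorem \ref{sub char} reduces the check to $|\nabla a_{ij}|^2\lesssim\min\{a_{ii},a_{jj}\}$; each off-diagonal entry is of the form $-x_ix_j$ so $|\nabla a_{ij}|^2=x_i^2+x_j^2$, while the diagonal entries dominate $\min\{x_i^2,x_j^2\}$ up to constants (in fact each diagonal contains $x_i^2$ plus $\lambda x_k^2+2x_j^2$).

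The heart of the proof is the contradiction step. Suppose for some $\delta>0$ one can write
\begin{equation*}
\mathbf{Q}_\lambda(x,y,z)=\sum_{\ell=1}^{L}\mathbf{u}^\ell(x,y,z)\otimes\mathbf{u}^\ell(x,y,z),\qquad \mathbf{u}^\ell\in C^{1,\delta}.
\end{equation*}
Since $\mathbf{Q}_\lambda(0)=0$ and each dyad is positive semidefinite, summing the $(k,k)$ entries at the origin gives $\sum_\ell (u^\ell_k(0))^2=0$, hence $\mathbf{u}^\ell(0)=0$ for every $\ell$. Taylor's theorem at the origin then yields
\begin{equation*}
\mathbf{u}^\ell(x,y,z)=\mathbf{v}^\ell(x,y,z)+R^\ell(x,y,z),\qquad R^\ell(x,y,z)=O(r^{1+\delta}),
\end{equation*}
where $\mathbf{v}^\ell$ is a vector-valued \emph{linear} form (the degree-one Taylor polynomial, vanishing at $0$). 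Expanding the dyadic products, using $\mathbf{v}^\ell=O(r)$ and $R^\ell=O(r^{1+\delta})$,
\begin{equation*}
\mathbf{Q}_\lambda=\sum_{\ell=1}^{L}\mathbf{v}^\ell\otimes\mathbf{v}^\ell+O(r^{2+\delta}).
\end{equation*}

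Both $\mathbf{Q}_\lambda$ and $\sum_\ell \mathbf{v}^\ell\otimes\mathbf{v}^\ell$ are matrix-valued homogeneous polynomials of degree $2$, while the remainder is $o(r^2)$ entrywise as $r\to 0$. Comparing the homogeneous degree-$2$ components forces the polynomial identity
\begin{equation*}
\mathbf{Q}_\lambda(x,y,z)=\sum_{\ell=1}^{L}\mathbf{v}^\ell(x,y,z)\otimes\mathbf{v}^\ell(x,y,z),
\end{equation*}
i.e.\ $\mathbf{Q}_\lambda$ is a finite sum of squares of linear (dyadic) matrix polynomials, directly contradicting Theorem \ref{Cho's thm}.

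The main obstacle is really \emph{Cho's algebraic fact} underlying Theorem \ref{Cho's thm}; once that is in hand, the passage from $C^{1,\delta}$ squares to polynomial squares is a soft Taylor argument. The one technical subtlety is ensuring that each $\mathbf{u}^\ell$ vanishes at the origin — without this, the Taylor polynomial would have a nonzero constant term and the matching of degree-$2$ parts could fail — but this is forced by positive semidefiniteness of the dyads and the vanishing of $\mathbf{Q}_\lambda(0)$, as observed above.
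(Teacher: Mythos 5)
Your argument follows essentially the same route as the paper's: a first-order Taylor expansion of each $C^{1,\delta}$ vector field reduces the $C^{1,\delta}$ sum-of-squares hypothesis to a sum of squares of linear forms by matching the homogeneous degree-$2$ parts, which contradicts Theorem~\ref{Cho's thm}. In fact you supply a step the paper leaves implicit: the paper asserts $\mathbf{v}^{\ell}=O(r)$, i.e.\ $\mathbf{u}^{\ell}(0)=0$, without comment, whereas you correctly derive it from positive semidefiniteness of the dyads together with $\mathbf{Q}_{\lambda}(0)=0$.
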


This conclusion shows in particular that a smooth matrix-valued function,
comparable to the identity matrix, need not have even a $C^{1,\delta }$ sum
of squares representation, in stark contrast to the scalar case where the
Fefferman-Phong theorem shows that a $C^{1,1}$ sum of squares representation
always holds.

However, the hypotheses $\left\vert D^{\mu }a_{k,k}\left( w\right)
\right\vert \lesssim a_{k,k}\left( w\right) ^{\left[ 1-\left\vert \mu
\right\vert \varepsilon \right] _{+}+\delta ^{\prime }}$ for some $\delta
^{\prime }>0$, on the \emph{diagonal} entries $a_{k,k}\left( w\right) $ in
Theorem \ref{final n Grushin} with $n=3$ and $p=4$, are \emph{not} satisfied
by the matrix function $\mathbf{Q}_{\lambda }\left( x,y,z\right) =\mathbf{Q}%
_{\lambda }\left( w\right) $ with $w=\left( x,y,z\right) $, since $\frac{%
\partial ^{2}}{\partial w_{k}^{2}}a_{k,k}\left( w\right) =2$ and $%
a_{k,k}\left( w\right) ^{\left[ 1-\left\vert \mu \right\vert \varepsilon %
\right] _{+}+\delta ^{\prime }}\lesssim \left\vert w\right\vert ^{\delta
^{\prime }}$. We now turn to constructing a counterexample in Theorem \ref%
{flat counter} below, where only the off diagonal inequalities fail to hold,
which will show that in order to conclude that there is a representation as
a sum of squares of $C^{2,\delta }$ vectors, it is necessary to impose
additional conditions on the \emph{off-diagonal} entries, such as we have
done in Theorem \ref{final n Grushin}.

\subsection{The flat elliptical case}

We will prove in Lemma \ref{failure} below that there is a positive constant 
$C_{\beta }$ such that if 
\begin{equation*}
\psi \left( t\right) \leq C_{\beta }\varphi \left( t\right) ^{\frac{2}{\beta 
}}t^{\frac{4}{\beta }},\ \ \ \ \text{for all sufficiently small }\left\vert
t\right\vert ,
\end{equation*}%
then $\mathbf{F}_{\varphi ,\psi }$ \emph{cannot} be written as a finite sum
of squares of $C^{1,\beta }$ vector fields.

To match notation with that used in the paper \cite{KoSa1}, we fix $%
0<\lambda <\frac{2}{81}$, and for $W=\left( x,y,z\right) \in \mathbb{R}^{3}$%
, set 
\begin{equation*}
\mathbf{L}\left( W\right) =\mathbf{L}\left( x,y,z\right) \equiv \mathbf{Q}%
_{\lambda }\left( x,y,z\right) =\left[ 
\begin{array}{ccc}
x^{2}+\lambda y^{2}+2z^{2} & -xy & -xz \\ 
-xy & y^{2}+\lambda z^{2}+2x^{2} & -yz \\ 
-xz & -yz & z^{2}+\lambda x^{2}+2y^{2}%
\end{array}%
\right] ,
\end{equation*}%
so that $\mathbf{L}\left( W\right) \sim \left\vert W\right\vert ^{2}\mathbb{I%
}_{3}$. We now recall some of the constructions in \cite{KoSa1}, but in the
context of $\mathbb{R}^{3}$ here, rather than $\mathbb{R}^{4}$ as was done
in \cite{KoSa1}. For a modulus of continuity $\omega $, and $h$ defined on
the unit ball $B_{\mathbb{R}^{3}}\left( 0,1\right) $ in $\mathbb{R}^{4}$, 
\begin{equation*}
\left\Vert h\right\Vert _{C^{1,\omega }\left( B_{\mathbb{R}^{3}}\left(
0,1\right) \right) }\equiv \left\Vert h\right\Vert _{C^{0}\left( B_{\mathbb{R%
}^{3}}\left( 0,1\right) \right) }+\left\Vert \nabla h\right\Vert
_{C^{0}\left( B_{\mathbb{R}^{3}}\left( 0,1\right) \right)
}+\sup_{W,W^{\prime }\in B_{\mathbb{R}^{3}}\left( 0,1\right) }\frac{%
\left\vert \nabla h\left( W\right) -\nabla h\left( W^{\prime }\right)
\right\vert }{\omega \left( \left\vert W-W^{\prime }\right\vert \right) }.
\end{equation*}

Given $\tau >0$, define%
\begin{equation*}
\mathcal{C}_{1,\omega }^{\nu }\left( \tau \right) \equiv \inf \left\{
\left\Vert \mathcal{G}\right\Vert _{1,\omega }:\mathcal{G}\in \oplus ^{\nu
}C^{1,\omega }\left( B_{\mathbb{R}^{3}}\left( 0,1\right) \right) \text{ and }%
\mathbf{L}\left( W\right) +\tau =\sum_{\ell =1}^{\nu }G_{\ell }\left(
W\right) G_{\ell }\left( W\right) ^{\func{tr}}\right\} .
\end{equation*}%
Note that this expression differs from that in \cite{KoSa1} by using the
smoothness space $C^{1,\omega }$ in place of $C^{2,\omega }$, as was used in 
\cite{KoSa1}. The reason is that the matrix form $\mathbf{L}\left(
x,y,z\right) $ is homogenous of degree $2$ here, whereas the function form $%
L\left( w,x,y,z\right) $ was homogenous of degree $4$ in \cite{KoSa1}. We
will need a crucial lower bound in the case $\omega \left( s\right) =\omega
_{\beta }\left( s\right) =s^{\beta }$. For this we define%
\begin{equation}
\delta _{\nu }^{2}\equiv \inf_{\left\{ S_{\ell }\right\} _{\ell =1}^{\nu
}}\inf_{W\in \mathbb{S}^{2}}\left( \mathbf{L}\left( W\right) -\sum_{\ell
=1}^{\nu }S_{\ell }\left( W\right) ^{2}\right) ^{2}.  \label{def delta}
\end{equation}%
Here the infimum is taken over all collections $\left\{ S_{\ell }\right\}
_{\ell =1}^{\nu }$ of linear forms $S_{\ell }\left( W\right) =f_{\ell
,1}x+f_{\ell ,2}y+f_{\ell ,3}z$ with $W\in \mathbb{S}^{2}$ and coefficients $%
f_{\ell ,j}$ of modulus at most $C_{0}$, which will be determined in (\ref{R
bound}) below. Since the infimum is taken over a compact set, it is
achieved, and must then be positive since $\mathbf{L}$ cannot be written as
a sum of squares of linear forms by Theorem \ref{Cho's thm}.

\begin{lemma}
\label{crucial}With notation as above, there is a positive constant $C$ such
that 
\begin{equation}
\mathcal{C}_{1,\omega _{\beta }}^{\nu }\left( \tau \right) \geq \left( \frac{%
\delta _{\nu }}{2C\tau ^{\frac{\beta }{2}}}\right) ^{\frac{1}{2-\beta }%
}=\left( \frac{\delta _{\nu }}{2C}\right) ^{\frac{1}{2-\beta }}\tau ^{-\frac{%
\beta }{4-2\beta }}.  \label{crucial low}
\end{equation}
\end{lemma}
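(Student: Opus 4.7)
The plan is to exploit Cho's theorem (Theorem \ref{Cho's thm}) quantitatively by Taylor expanding each $G_\ell$ at the origin and showing that the obstruction to $\mathbf{L}$ being a sum of squares of linear forms forces $\|\mathcal{G}\|_{1,\omega_\beta}$ to blow up as $\tau\to 0$. Given any admissible decomposition $\mathbf{L}(W)+\tau\mathbb{I}=\sum_{\ell=1}^\nu G_\ell(W)G_\ell(W)^{\func{tr}}$ with $M\equiv\|\mathcal{G}\|_{1,\omega_\beta}$, I would write $G_\ell(W)=c_\ell+S_\ell(W)+R_\ell(W)$, where $c_\ell=G_\ell(0)$, $S_\ell(W)=\nabla G_\ell(0)W$ is a linear $\mathbb{R}^3$-valued form (a linear combination of columns of $\nabla G_\ell(0)$), and $R_\ell$ is the Taylor remainder. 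Since $\omega_\beta(s)=s^\beta$ controls the modulus of $\nabla G_\ell$, integration gives $|R_\ell(W)|\lesssim\|G_\ell\|_{1,\omega_\beta}|W|^{1+\beta}$. Evaluating the decomposition at $W=0$ yields $\tau\mathbb{I}=\sum_\ell c_\ell c_\ell^{\func{tr}}$, whence $\sum_\ell|c_\ell|^2=3\tau$ and $M\geq\sqrt{3\tau}$.

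Next I would kill the first-order cross terms by an antisymmetrization trick. Expanding $G_\ell G_\ell^{\func{tr}}$ and averaging the identity at $W$ with the identity at $-W$ --- using that $\mathbf{L}$ is homogeneous of degree $2$ (so $\mathbf{L}(-W)=\mathbf{L}(W)$) and that $S_\ell$ is odd --- the constant piece reproduces $\tau\mathbb{I}$ while the linear-in-$W$ cross pieces $c_\ell S_\ell^{\func{tr}}+S_\ell c_\ell^{\func{tr}}$ cancel. Writing $W=r\Omega$ with $\Omega\in\mathbb{S}^2$, this leaves
\begin{equation*}
r^2\mathbf{L}(\Omega)=r^2\sum_{\ell=1}^{\nu}S_\ell(\Omega)S_\ell(\Omega)^{\func{tr}}+\widetilde{E}(r\Omega),
\end{equation*}
where $\widetilde{E}$ is a symmetrized sum of terms each containing at least one factor $R_\ell$. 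Using Cauchy-Schwarz with $\sum_\ell|c_\ell|^2=3\tau$, together with $\sum_\ell|S_\ell(W)|^2\leq M^2|W|^2$ and $\sum_\ell|R_\ell(W)|^2\lesssim M^2|W|^{2+2\beta}$, I would obtain
\begin{equation*}
|\widetilde{E}(r\Omega)|\leq C\bigl(\tau^{1/2}Mr^{1+\beta}+M^2r^{2+\beta}\bigr),\qquad 0<r\leq 1.
\end{equation*}

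Finally, provided the coefficients $\partial_j G_\ell(0)$ of each $S_\ell$ have modulus at most $C_0$ --- automatic when $M\leq C_0$, and handled separately by absorbing into the constant $C$ when $M>C_0$ --- the definition (\ref{def delta}) supplies $|\mathbf{L}(\Omega)-\sum_\ell S_\ell(\Omega)S_\ell(\Omega)^{\func{tr}}|\geq\delta_\nu$ pointwise on $\mathbb{S}^2$. Dividing the displayed identity by $r^2$ and combining with the error bound produces
\begin{equation*}
\delta_\nu\leq C\bigl(\tau^{1/2}Mr^{\beta-1}+M^2r^\beta\bigr).
\end{equation*}
Optimizing by choosing $r=\tau^{1/2}/M$ (valid since $M\geq\sqrt{3\tau}$ forces $r\leq 1/\sqrt{3}$) balances the two summands and gives $\delta_\nu\leq 2CM^{2-\beta}\tau^{\beta/2}$, from which (\ref{crucial low}) follows by raising to the power $1/(2-\beta)$. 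The main delicacy I expect to face is the bookkeeping of $\widetilde{E}$ --- verifying that after antisymmetrization no surviving term beats $\tau^{1/2}Mr^{1+\beta}$ at order $|W|^{1+\beta}$ --- since that is exactly the scale which pins down the exponent $\beta/(4-2\beta)$ of $\tau$ in the final bound.
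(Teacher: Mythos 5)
Your plan follows the paper's proof quite closely: Taylor-expand each $G_\ell$ at the origin, isolate the constant part by setting $W=0$, bound the error contributed by the $C^{1,\omega_\beta}$ remainders, and optimize at the scale $r=\sqrt{\tau}/M$. The only cosmetic difference is how the first-order cross terms are eliminated --- you average $W$ with $-W$ to cancel $c_\ell S_\ell^{\func{tr}}+S_\ell c_\ell^{\func{tr}}$, whereas the paper argues that the linear polynomial $\sum_\ell 2a_\ell S_\ell$ must vanish identically since it equals something that is $o(|W|)$. Both routes produce the same error bound $|\widetilde{E}|\lesssim \sqrt{\tau}\,M\,r^{1+\beta}+M^{2}r^{2+\beta}$, and the optimization yielding $M^{2-\beta}\gtrsim\delta_\nu\tau^{-\beta/2}$ is identical.

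There is, however, a genuine soft spot where you appeal to the definition of $\delta_\nu$. The infimum in (\ref{def delta}) runs only over linear forms whose coefficients are bounded by the fixed constant $C_0$ of (\ref{R bound}), and you must check that the $S_\ell=\nabla G_\ell(0)\cdot(\,\cdot\,)$ arising from your expansion obey that constraint. Your dichotomy --- ``automatic when $M\leq C_0$, handled by absorbing into $C$ when $M>C_0$'' --- does not close this: the lemma is precisely the statement that $M\to\infty$ as $\tau\searrow 0$, so for small $\tau$ one is unavoidably in the case $M>C_0$, and if the coefficients of $S_\ell$ exceed $C_0$ then $\delta_\nu$ simply supplies no lower bound. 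What actually rescues the argument (and what the paper invokes via (\ref{R bound}), tersely) is that the representation itself controls the $S_\ell$ coefficients: taking traces in $\sum_\ell G_\ell G_\ell^{\func{tr}}=\mathbf{L}+\tau\,\mathbb{I}_3$ gives $\sum_\ell|G_\ell(W)|^{2}=\func{trace}\,\mathbf{L}(W)+3\tau$, and replaying the remainder estimate at the same scale $|W|\sim\sqrt{\tau}/M$ yields $\bigl|\func{trace}\,\mathbf{L}(W)-\sum_\ell|S_\ell(W)|^{2}\bigr|\lesssim\tau^{\beta/2}M^{2-\beta}$, which stays uniformly bounded exactly in the regime where the asserted lower bound on $M$ has not already been trivially met. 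You need this constraint-derived bound on the $S_\ell$ coefficients spelled out; the size of $M$ alone is the wrong handle, and as written the case $M>C_0$ is a hole.
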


\begin{proof}
We claim the inequalities%
\begin{eqnarray}
\delta _{\nu } &\leq &\mathbf{L}\left( \frac{W}{\left\vert W\right\vert }%
\right) -\sum_{\ell =1}^{\nu }S_{\ell }\left( \frac{W}{\left\vert
W\right\vert }\right) ^{2}=\left\vert \frac{\mathbf{L}\left( W\right)
-\sum_{\ell =1}^{\nu }S_{\ell }\left( W\right) ^{2}}{\left\vert W\right\vert
^{2}}\right\vert  \label{claimed ineq} \\
&\leq &2C\left\Vert \mathcal{G}\right\Vert _{1,\omega }\omega \left(
\left\vert W\right\vert \right) =2C\left\Vert \mathcal{G}\right\Vert
_{1,\omega }^{2}\omega \left( \frac{\sqrt{\tau }}{\left\Vert \mathcal{G}%
\right\Vert _{1,\omega }}\right) ,  \notag
\end{eqnarray}%
which then lead directly to (\ref{crucial low}) with $\omega =\omega _{\beta
}$ upon using the definition of $\mathcal{C}_{1,\omega }^{\nu }\left( \tau
\right) $. To see this claim we write 
\begin{eqnarray*}
&&\mathbf{L}\left( W\right) +\tau =\sum_{\ell =1}^{\nu }G_{\ell }\left(
W\right) ^{2}, \\
&&\text{where }G_{\ell }\left( W\right) =a_{\ell }+S_{\ell }\left( W\right)
+R_{\ell }\left( W\right) , \\
&&a_{\ell }\text{ is a constant, }S_{\ell }\left( W\right) \text{ is linear
and }R_{\ell }\left( W\right) \text{ is }o\left( \left\vert W\right\vert
\right) .
\end{eqnarray*}%
Then setting $W=0$ in the equation gives 
\begin{equation*}
\tau =\sum_{\ell =1}^{\nu }a_{\ell }^{2},
\end{equation*}%
and so%
\begin{eqnarray*}
\mathbf{L}\left( W\right) &=&\sum_{\ell =1}^{\nu }\left[ a_{\ell }+S_{\ell
}\left( W\right) +R_{\ell }\left( W\right) \right] ^{2}-\tau \\
&=&\left( \sum_{\ell =1}^{\nu }a_{\ell }^{2}\right) -\tau +\sum_{\ell
=1}^{\nu }2a_{\ell }S_{\ell }\left( W\right) \\
&&+\sum_{\ell =1}^{\nu }2a_{\ell }R_{\ell }\left( W\right) +\sum_{\ell
=1}^{\nu }\left[ S_{\ell }\left( W\right) +R_{\ell }\left( W\right) \right]
^{2}.
\end{eqnarray*}%
Now the sum of terms in the middle line vanishes identically since it is a
linear polynomial, and all of the remaining terms in the final line of the
identity vanish to order greater than $1$ at the origin (simply
differentiate this identity and then evaluate at $W=0$, using that $\nabla 
\mathbf{L}\left( 0\right) $ and $\nabla R_{\ell }\left( 0\right) $ vanish).
Thus we conclude that%
\begin{equation}
\mathbf{L}\left( W\right) -\sum_{\ell =1}^{\nu }\left[ S_{\ell }\left(
W\right) +R_{\ell }\left( W\right) \right] ^{2}=\sum_{\ell =1}^{\nu
}2a_{\ell }R_{\ell }\left( W\right) ,  \label{conclude''}
\end{equation}%
where%
\begin{eqnarray}
\sum_{\ell =1}^{\nu }a_{\ell }^{2} &=&\tau ,  \label{terms_est'} \\
\sum_{\ell =1}^{\nu }|S_{\ell }\left( W\right) | &\leq &\left\Vert \mathbf{G}%
\right\Vert _{1,\omega }|W|,  \notag \\
\sum_{\ell =1}^{\nu }|R_{\ell }\left( W\right) | &\leq &\left\Vert \mathbf{G}%
\right\Vert _{1,\omega }|W|\omega (W).  \notag
\end{eqnarray}

From (\ref{conclude''}) we have%
\begin{eqnarray*}
\mathbf{L}\left( W\right) -\sum_{\ell =1}^{\nu }S_{\ell }\left( W\right)
^{2} &=&\mathbf{L}\left( W\right) -\sum_{\ell =1}^{\nu }\left[ S_{\ell
}\left( W\right) +R_{\ell }\left( W\right) \right] ^{2}+\sum_{\ell =1}^{\nu }%
\left[ 2S_{\ell }\left( W\right) +R_{\ell }\left( W\right) \right] R_{\ell
}\left( W\right) \\
&=&h_{1}\left( W\right) +h_{2}\left( W\right) \equiv h\left( W\right) ,
\end{eqnarray*}%
where 
\begin{eqnarray*}
h_{1}\left( W\right) &\equiv &\sum_{\ell =1}^{\nu }2a_{\ell }R_{\ell }\left(
W\right) , \\
h_{2}\left( W\right) &\equiv &\sum_{\ell =1}^{\nu }\left[ 2S_{\ell }\left(
W\right) +R_{\ell }\left( W\right) \right] R_{\ell }\left( W\right) .
\end{eqnarray*}%
Using (\ref{terms_est'}) we obtain 
\begin{align*}
\left\vert h_{1}\left( W\right) \right\vert & \leq C\sqrt{\tau }\left\Vert 
\mathcal{G}\right\Vert _{1,\omega }\left\vert W\right\vert \omega \left(
\left\vert W\right\vert \right) \\
\left\vert h_{2}\left( W\right) \right\vert & \leq C\left\Vert \mathcal{G}%
\right\Vert _{1,\omega }\left\vert W\right\vert \left\vert R_{\ell }\left(
W\right) \right\vert \leq C\left\Vert \mathcal{G}\right\Vert _{1,\omega
}\left\vert W\right\vert ^{2}\omega \left( \left\vert W\right\vert \right) .
\end{align*}%
So altogether we have%
\begin{equation*}
\left\vert h\left( W\right) \right\vert \leq \left\vert h_{1}\left( W\right)
\right\vert +\left\vert h_{2}\left( W\right) \right\vert \leq C\sqrt{\tau }%
\left\Vert \mathcal{G}\right\Vert _{1,\omega }\left\vert W\right\vert \omega
\left( \left\vert W\right\vert \right) +C\left\Vert \mathcal{G}\right\Vert
_{1,\omega }^{2}\left\vert W\right\vert ^{2}\omega \left( \left\vert
W\right\vert \right) ,
\end{equation*}%
provided $\left\vert W\right\vert \leq 1$.

Now note that we may assume without loss of generality that $\frac{\sqrt{%
\tau }}{\left\Vert \mathcal{G}\right\Vert _{1,\omega }}\leq 1$, since
otherwise (\ref{crucial low}) holds trivially. Thus if $\left\vert
W\right\vert =\frac{\sqrt{\tau }}{\left\Vert \mathbf{G}\right\Vert
_{1,\omega }}$, then we have 
\begin{equation*}
\left\vert h\left( W\right) \right\vert \leq C\tau \omega \left( \frac{\sqrt{%
\tau }}{\left\Vert \mathcal{G}\right\Vert _{1,\omega }}\right) +C\tau \omega
\left( \frac{\sqrt{\tau }}{\left\Vert \mathcal{G}\right\Vert _{1,\omega }}%
\right) .
\end{equation*}%
Consequently we conclude%
\begin{equation*}
\frac{\tau }{\left\Vert \mathcal{G}\right\Vert _{1,\omega }^{2}}\left\vert 
\mathbf{L}\left( \frac{W}{\left\vert W\right\vert }\right) -\sum_{\ell
=1}^{\nu }S_{\ell }\left( \frac{W}{\left\vert W\right\vert }\right)
^{2}\right\vert =\left\vert \mathbf{L}\left( W\right) -\sum_{\ell =1}^{\nu
}S_{\ell }\left( W\right) ^{2}\right\vert =\left\vert h\left( W\right)
\right\vert \leq C\tau \omega \left( \frac{\sqrt{\tau }}{\left\Vert \mathcal{%
G}\right\Vert _{1,\omega }}\right) .
\end{equation*}%
Also note that from $\sum_{\ell =1}^{\nu }\left\vert S_{\ell }\left(
W\right) \right\vert \leq C\sqrt{\mathbf{L}\left( W\right) +\tau }$, we
obtain for $0<\tau <1$ that 
\begin{equation}
\left\vert f_{\ell ,\alpha }\right\vert \leq C_{0}\equiv C\sqrt{\mathbf{L}%
\left( W\right) +1}.  \label{R bound}
\end{equation}%
This completes the proof of our claimed inequality (\ref{claimed ineq}), and
hence also that of Lemma \ref{crucial}.
\end{proof}

For a positive integer $\nu \in \mathbb{N}$ and a modulus of continuity $%
\omega $, we say that a smooth nonnegative\ matrix function $\mathbf{F}%
\left( W,t\right) $ has the property $\mathcal{SOS}_{1,\omega }^{\nu }$ if
there exists a finite collection $\mathcal{G}=\left\{ G_{\ell }\left(
W,t\right) \right\} _{\ell =1}^{\nu }\in \oplus ^{\nu }C^{1,\omega }\left(
\Omega \right) $ of vector fields $G_{\ell }\left( W,t\right) \in
C^{1,\omega }\left( \Omega \right) $ such that%
\begin{equation*}
\mathbf{F}\left( W,t\right) =\sum_{\ell =1}^{\nu }G_{\ell }\left( W,t\right)
G_{\ell }\left( W,t\right) ^{\func{tr}},\ \ \ \ \ \left( W,t\right) \in
\Omega .
\end{equation*}

Now let $\varphi :\left( 0,1\right) \rightarrow \left( 0,1\right) $ be a
strictly increasing elliptical flat smooth function on $\left( 0,1\right) $,
and define the matrix function%
\begin{eqnarray}
\mathbf{F}_{\varphi ,\psi }\left( W,t\right) &\equiv &\varphi \left(
t\right) \mathbf{L}\left( W\right) +\left( \psi \left( t\right) +\eta \left(
t,r\right) \right) \mathbb{I}_{3},  \label{def f} \\
\text{for }\ \left( W,t\right) &\in &\Omega \equiv B_{\mathbb{R}^{3}}\left(
0,1\right) \times \left( -1,1\right) ,  \notag
\end{eqnarray}%
where $\mathbb{I}_{3}$ is the $3\times 3$ identity matrix, $r=\left\vert
W\right\vert =\sqrt{x^{2}+y^{2}+z^{2}}$, and $\psi \left( t\right) $ and $%
\eta \left( t,r\right) $ are smooth nonnegative functions constructed as
follows. The function $\eta \left( t,r\right) $ is chosen to have the form $%
\eta \left( t,r\right) =\varphi \left( r\right) h\left( \frac{t}{r}\right) $
where $h$ is a smooth nonnegative function supported in $\left( -1,1\right) $
with $h\left( 0\right) =1$. With these constructions completed, we see that $%
\mathbf{F}_{\varphi ,\psi }$ is a diagonally elliptical flat smooth $3\times
3$ matrix function on $B_{\mathbb{R}^{3}}\left( 0,1\right) \times \left(
-1,1\right) $.

\begin{lemma}
\label{failure}Suppose $0<\beta <1$ and let $\mathbf{F}_{\varphi ,\psi
}\left( W,t\right) $ be as in (\ref{def f}). If 
\begin{equation}
\lim_{t\rightarrow 0}\frac{\psi \left( t\right) }{\varphi \left( t\right) ^{%
\frac{2}{\beta }}t^{\frac{4}{\beta }}}=0,  \label{contra}
\end{equation}%
then $\mathbf{F}_{\varphi ,\psi }$ fails to satisfy $\mathcal{SOS}_{1,\omega
_{\beta }}^{\nu }$ for any $\nu \in \mathbb{N}$. Note in particular we may
even take both $\varphi $ and $\psi $ to be nearly monotone functions on $%
\left( -1,1\right) $.
\end{lemma}

\begin{proof}
Assume that $\mathbf{F}_{\varphi ,\psi }\left( W,t\right) $ has the property 
$\mathcal{SOS}_{1,\omega _{\beta }}^{\nu }$ for some $\nu \in \mathbb{N}$,
i.e. $\mathbf{F}_{\varphi ,\psi }=\sum_{\ell =1}^{\nu }G_{\ell }^{2}$ where $%
G_{\ell }\in C^{1,\omega }\left( \Omega \right) $, i.e.%
\begin{eqnarray*}
&&\varphi \left( t\right) \mathbf{L}\left( x,y,z,t\right) +\left[ \psi
\left( t\right) +\sigma \left( r\right) h\left( \frac{t}{r}\right) \right] 
\mathbb{I}_{3}=\sum_{\ell =1}^{\nu }G_{\ell }\left( x,y,z,t\right) ^{2}, \\
&&\text{for }\left( x,y,z,t\right) \in \Omega =B_{\mathbb{R}^{3}}\left(
0,1\right) \times \left( -1,1\right) .
\end{eqnarray*}%
Then since $h\left( \frac{t}{r}\right) $ vanishes for $r\leq \left\vert
t\right\vert $, we have with $W\equiv \left( x,y,z\right) $, and without
loss of generality $t>0$, that%
\begin{equation*}
\varphi \left( t\right) \mathbf{L}\left( W\right) +\psi \left( t\right) 
\mathbb{I}_{3}=\sum_{\ell =1}^{\nu }G_{\ell }\left( W,t\right) ^{2},\ \ \ \
\ \text{for }r\leq t,
\end{equation*}%
and replacing $W$ by $tW$ we have, 
\begin{eqnarray*}
\varphi \left( t\right) \mathbf{L}\left( tW\right) +\psi \left( t\right) 
\mathbb{I}_{3} &=&\sum_{\ell =1}^{\nu }G_{\ell }\left( tW,t\right) ^{2}, \\
\text{for }\left\vert W\right\vert &\leq &1,t\in \left( 0,1\right) .
\end{eqnarray*}%
Multiplying by $\frac{1}{\varphi \left( t\right) t^{2}}$, and using that $%
\mathbf{L}$ is homogeneous of degree two, we obtain%
\begin{eqnarray*}
\mathbf{L}\left( W\right) +\frac{\psi \left( t\right) }{\varphi \left(
t\right) t^{2}}\mathbb{I}_{3} &=&\sum_{\ell =1}^{\nu }\left( \frac{G_{\ell
}\left( tW,t\right) }{\sqrt{\varphi \left( t\right) }t}\right) ^{2}, \\
\text{for }\left\vert W\right\vert &\leq &1,t\in \left( 0,1\right) .
\end{eqnarray*}

Since $G_{\ell }\in C^{1,\omega }\left( B_{\mathbb{R}^{4}}\left( 0,1\right)
\times \left( -1,1\right) \right) $, the functions $W\rightarrow G_{\ell
}\left( W,t\right) $ lie in a bounded set in $C^{1,\omega }\left( B_{\mathbb{%
R}^{3}}\left( 0,1\right) \right) $ independent of $t$ and $j$, and hence
also the collection of functions%
\begin{equation*}
H_{\ell }^{t}\left( W\right) \equiv G_{\ell }\left( tW,t\right) ,\ \ \ \ \
1\leq \ell \leq \nu ,t\in \left( 0,1\right) ,
\end{equation*}%
is bounded in $C^{1,\omega }\left( B_{\mathbb{R}^{4}}\left( 0,1\right)
\right) $, say 
\begin{equation}
\sum_{\ell =1}^{\nu }\left\Vert H_{\ell }^{t}\right\Vert _{C^{1,\omega
}\left( B_{\mathbb{R}^{3}}\left( 0,1\right) \right) }\leq \mathfrak{N}_{\nu
},\ \ \ \ \ t\in \left( 0,1\right) .  \label{say bound}
\end{equation}%
Thus with $\tau =\tau \left( t\right) \equiv \frac{\psi \left( t\right) }{%
\varphi \left( t\right) t^{2}}$, we have from (\ref{say bound}) and (\ref%
{crucial low}) that%
\begin{eqnarray*}
&&\frac{\mathfrak{N}_{\nu }}{\sqrt{\varphi \left( t\right) t^{2}}}\geq
\sum_{\ell =1}^{\nu }\left\Vert \frac{H_{\ell }^{t}}{\sqrt{\varphi \left(
t\right) }t^{2}}\right\Vert _{C^{1,\omega }\left( B_{\mathbb{R}^{4}}\left(
0,1\right) \right) }\geq \mathcal{C}_{\nu }^{\omega }\left( \tau \left(
t\right) \right) \\
&\geq &\left( \frac{\delta _{\nu }}{C}\right) ^{\frac{1}{2-\beta }}\tau
\left( t\right) ^{-\frac{\beta }{4-2\beta }}=\left( \frac{\delta _{\nu }}{C}%
\right) ^{\frac{1}{2-\beta }}\left( \frac{\psi \left( t\right) }{\varphi
\left( t\right) t^{2}}\right) ^{-\frac{\beta }{4-2\beta }},
\end{eqnarray*}%
and hence%
\begin{equation*}
\left( \frac{\delta _{\nu }}{C}\right) ^{\frac{1}{2-\beta }}\leq
\liminf_{t\rightarrow 0}\frac{\mathfrak{N}_{\nu }}{\sqrt{\varphi \left(
t\right) t^{2}}}\left( \frac{\psi \left( t\right) }{\varphi \left( t\right)
t^{2}}\right) ^{\frac{\beta }{4-2\beta }}=\mathfrak{N}_{\nu
}\liminf_{t\rightarrow 0}\left( \frac{\psi \left( t\right) }{\varphi \left(
t\right) ^{\frac{2}{\beta }}t^{\frac{4}{\beta }}}\right) ^{\frac{\beta }{%
4-2\beta }},
\end{equation*}%
contradicting (\ref{contra}) as required. This completes the proof of Lemma %
\ref{failure}.
\end{proof}

\subsubsection{Sharpness}

In this subsubsection we take 
\begin{equation*}
\psi \left( t\right) \leq C\varphi \left( t\right) ^{\frac{2}{\beta }}t^{%
\frac{4}{\beta }},\ \ \ \ \ \text{for some }\beta <1,
\end{equation*}%
where both $\varphi $ and $\psi $ are nearly monotone on $\left( -1,1\right) 
$. Then by Lemma \ref{failure}, the matrix function $\mathbf{F}_{\varphi
,\psi }$ as in (\ref{def f}) fails to be a finite sum of squares of $%
C^{1,\beta }$ vector fields. On the other hand, we now show that (\ref{off
diag hyp}) holds with $\varepsilon <\frac{1}{4}$.

\begin{lemma}
Let $\varphi $ be nearly monotone on $\left( -1,1\right) $. The off diagonal
entries of $\mathbf{F}_{\varphi ,\psi }$ satisfy (\ref{off diag hyp}) for
some $\delta ,\delta ^{\prime },\delta ^{\prime \prime }>0$ if $%
0<\varepsilon <\frac{1}{4}$.
\end{lemma}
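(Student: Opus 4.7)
The plan is to verify (\ref{off diag hyp}) for each off-diagonal entry
\begin{equation*}
a_{kj}(W,t) = -\varphi(t)\, x_k x_j, \qquad 1\le k<j\le 3,
\end{equation*}
by direct computation followed by pointwise comparison against $m_j=\min_{s\le j}a_{ss}$, which for $\mathbf{F}_{\varphi,\psi}$ is comparable to $\varphi(t)r^2+\psi(t)+\eta(t,r)$. Writing $\mu=(\mu_1,\mu_2,\mu_3,\mu_t)$ with the first three indices counting derivatives in the spatial variables, one first observes that $D^\mu a_{kj}$ is nonzero only when $\mu_k,\mu_j\in\{0,1\}$ and the remaining spatial index vanishes; for such $\mu$,
\begin{equation*}
|D^\mu a_{kj}(W,t)|\;\le\;|\varphi^{(\mu_t)}(t)|\,r^{\,2-\mu_k-\mu_j}.
\end{equation*}
The near monotonicity of $\varphi$ together with \cite[Theorem 18]{KoSa1} yields $|\varphi^{(\mu_t)}(t)|\lesssim\varphi(t)^{s^{\mu_t}}$ for every preassigned $s<1$, so the task becomes a power inequality in the three quantities $\varphi(t)$, $r$, and $\psi(t)$.

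For each admissible $\mu$ I would then verify
\begin{equation*}
\varphi(t)^{s^{\mu_t}}r^{2-\mu_k-\mu_j}\;\lesssim\;\bigl(\varphi(t)r^2+\psi(t)+\eta(t,r)\bigr)^{[\frac{1}{2}+(2-|\mu|)\varepsilon]_++\delta''}
\end{equation*}
by splitting $\Omega$ according to which of the three summands of the diagonal minimum is dominant. Where $\varphi(t)r^2$ dominates, the target reduces to a power comparison in $\varphi(t)\le 1$ and $r\le 1$; the hypothesis $\varepsilon<\tfrac{1}{4}$ keeps the target exponent strictly below $1$ at every $|\mu|$, providing slack in which to absorb $\delta''$ together with the loss $s<1$. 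Where $\eta(t,r)\approx\varphi(r)h(t/r)$ dominates, necessarily $r>|t|$, and the near monotonicity of $\varphi$ lets one replace $\varphi(t)$ by a power of $\varphi(r)$, reducing the inequality to a polynomial comparison in $r$. Where $\psi(t)$ dominates (close to the $t$-axis, where $\eta\equiv 0$), one invokes the near monotonicity of $\psi$ and its comparability with a power of $\varphi$ fixed in the sharpness construction.

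The main obstacle will be the borderline case $|\mu|=2$ with $\mu_k=\mu_j=1$ and $\mu_t=0$, where $|D^\mu a_{kj}|=\varphi(t)$ carries no compensating factor of $r$ while $m_j$ may drop to $\psi(t)$ along the $t$-axis. Here one must establish directly
\begin{equation*}
\varphi(t)\;\lesssim\;\psi(t)^{\frac{1}{2}+\delta''},
\end{equation*}
adjusting $\delta''$ in terms of the gap $\tfrac{1}{4}-\varepsilon$ and the near monotone relation between $\psi$ and $\varphi$. Once this critical comparison is in hand, the remaining multi-indices $|\mu|\in\{0,1,3,4\}$ go through by the same scheme with strict slack, arising either from the surviving factor $r^{\,2-\mu_k-\mu_j}<1$ when $|\mu|<2$ or from the strictly smaller target exponent $[\tfrac{1}{2}+(2-|\mu|)\varepsilon]_+<\tfrac{1}{2}$ when $|\mu|>2$.
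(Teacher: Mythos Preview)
Your identification of the critical case is exactly right: the obstruction sits at $|\mu|=2$ with $\mu_k=\mu_j=1$ and $\mu_t=0$, where $|D^\mu a_{kj}(W,t)|=\varphi(t)$ carries no residual power of $r$. Along the $t$-axis ($W=0$) one has $\eta(t,0)=0$ and $\varphi(t)\mathbf{L}(0)=0$, so $m_j(0,t)=\psi(t)$, and the required bound becomes precisely your displayed inequality $\varphi(t)\lesssim\psi(t)^{1/2+\delta''}$. The gap is that this inequality \emph{fails} for the $\psi$ used in the sharpness construction: Lemma~\ref{failure} takes $\psi(t)\le C_\beta\,\varphi(t)^{2/\beta}t^{4/\beta}$ with $\beta<1$, which forces $\psi(t)^{1/2}\le C'\varphi(t)^{1/\beta}t^{2/\beta}\ll\varphi(t)$ since $1/\beta>1$ and $\varphi$ is flat. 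No choice of $\delta''>0$ (the exponent at $|\mu|=2$ is $\tfrac12+\delta''$ regardless of $\varepsilon$), and no appeal to near monotonicity of $\psi$, can reverse this; if $\psi$ is taken still smaller, or zero, the failure is immediate.

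The paper's own argument glosses over exactly this point: it asserts
\[
(1+|W|^2)\,\varphi(t)^{1-\eta}\;\lesssim\;\bigl(|W|^2\varphi(t)\bigr)^{[\frac{1}{2}+(2-|\mu|)\varepsilon]_++\delta''}
\]
on the grounds that $1-\eta$ exceeds the target exponent, but this inequality is false at $W=0$ for any $t\ne 0$, since the right side vanishes while the left does not. So your proposal is more careful than the paper's in that you isolate the obstacle; but the obstacle is genuine and not removable by the route you suggest. As stated, the lemma cannot hold for the $\mathbf{F}_{\varphi,\psi}$ of the sharpness construction without either an additional lower bound $\psi(t)\gtrsim\varphi(t)^{2-}$ (which conflicts with the upper bound needed in Lemma~\ref{failure}) or a weaker reading of ``satisfies (\ref{off diag hyp})''.
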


\begin{proof}
The three off diagonal entries of $\mathbf{F}_{\varphi ,\psi _{s}}\left(
x,y,z,t\right) =\left[ a_{k,j}\right] _{k,j=1}^{3}$ are%
\begin{eqnarray*}
a_{1,2}\left( x,y,z,t\right) &=&xy\varphi \left( t\right) , \\
a_{2,3}\left( x,y,z,t\right) &=&yz\varphi \left( t\right) , \\
a_{1,3}\left( x,y,z,t\right) &=&zx\varphi \left( t\right) ,
\end{eqnarray*}%
and for $\left\vert \mu \right\vert \leq 4$, we have%
\begin{equation*}
\left\vert D^{\mu }\left( xy\varphi \left( t\right) \right) \right\vert
\lesssim \left\vert W\right\vert ^{2}\left\vert D^{\alpha }\varphi \left(
t\right) \right\vert +\left\vert W\right\vert \left\vert D^{\beta }\varphi
\left( t\right) \right\vert +\left\vert D^{\gamma }\varphi \left( t\right)
\right\vert ,
\end{equation*}%
where $\left\vert \alpha \right\vert \leq 4$, $\left\vert \beta \right\vert
\leq 3$ and $\left\vert \gamma \right\vert \leq 2$. Since $\varphi \left(
t\right) $ nearly monotone implies $\left\vert D^{\nu }\varphi \left(
t\right) \right\vert \leq C_{\nu ,\eta }\varphi \left( t\right) ^{1-\eta }$
for any $\eta >0$, we have%
\begin{equation*}
\left\vert D^{\mu }\left( xy\varphi \left( t\right) \right) \right\vert
\lesssim \left( 1+\left\vert W\right\vert ^{2}\right) \varphi \left(
t\right) ^{1-\eta },\ \ \ \text{for }\left\vert \mu \right\vert \leq 4.
\end{equation*}%
Now the diagonal entries are all comparable to $\left\vert W\right\vert
^{2}\varphi \left( t\right) $, and thus we obtain%
\begin{equation*}
\left\vert D^{\mu }\left( xy\varphi \left( t\right) \right) \right\vert
\lesssim \left( 1+\left\vert W\right\vert ^{2}\right) \varphi \left(
t\right) ^{1-\eta }\lesssim \left( \left\vert W\right\vert ^{2}\varphi
\left( t\right) \right) ^{\left[ \frac{1}{2}+\left( 2-\left\vert \mu
\right\vert \right) \varepsilon \right] _{+}+\delta ^{\prime \prime }},
\end{equation*}%
since%
\begin{equation*}
1-\eta >\left[ \frac{1}{2}+\left( 2-\left\vert \mu \right\vert \right)
\varepsilon \right] _{+}+\delta ^{\prime \prime }\text{ for }\left\vert \mu
\right\vert \leq 4\text{ and }\varepsilon <\frac{1}{4},
\end{equation*}%
provided $\delta ^{\prime \prime }>0$ is sufficiently small.
\end{proof}

\begin{lemma}
Let $\varphi $ and $\psi $ be nearly monotone on $\left( -1,1\right) $. The
diagonal entries of $\mathbf{F}_{\varphi ,\psi }=\left[ a_{k,j}\right]
_{k,j=1}^{3}$ satisfy (\ref{diag hyp}) for $\delta >0$ and $\varepsilon >%
\frac{1}{4}$.
\end{lemma}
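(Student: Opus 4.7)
The diagonal entry in question has the explicit form
\begin{equation*}
a_{k,k}(W,t) = \varphi(t)\,q_k(W) + \psi(t) + \varphi(r)\,h(t/r),
\end{equation*}
where $q_k$ is the positive definite quadratic form appearing in the $k$-th diagonal entry of $\mathbf{L}$, comparable to $|W|^2$, and $r = |W|$. The plan is to verify (\ref{diag hyp}) by estimating derivatives of each of the three summands separately and matching them with the three pointwise lower bounds $a_{k,k} \geq c\varphi(t)|W|^2$, $a_{k,k} \geq \psi(t)$, and (on the support $\{r \geq |t|\}$ of the third summand) $a_{k,k} \geq c\varphi(r)h(t/r)$.

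The key input is that near monotonicity of $\varphi$ and $\psi$ yields, via \cite[Theorem 18]{KoSa1}, the estimates $|D^n\varphi(t)|\lesssim \varphi(t)^{(s')^n}$ and $|D^n\psi(t)|\lesssim \psi(t)^{(s')^n}$ for any $s' < 1$ and any finite $n$. For the first summand, Leibniz factors $D^\mu[\varphi(t)q_k(W)] = D_t^{|\mu_t|}\varphi(t)\cdot D_W^{\mu_W} q_k(W)$, and since $q_k$ is quadratic we have $|D_W^{\mu_W} q_k(W)|\lesssim |W|^{(2-|\mu_W|)_+}$, which in particular vanishes for $|\mu_W|\geq 3$. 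For the second summand only pure $t$-derivatives are nonzero. The delicate summand is $\eta(W,t) = \varphi(r)h(t/r)$, whose chain-rule expansion produces negative powers of $r$ from differentiating $r$ and $t/r$; these are absorbed using the flatness of $\varphi$, which (being nearly monotone and flat at $0$) satisfies $\varphi(r)\,r^{-k}\lesssim \varphi(r)^{1-\eta'}$ for any $\eta',k>0$. Together with the compact support of $h$ this gives $|D^\mu \eta|\lesssim \varphi(r)^{1-\eta'}$ on $\{r \geq |t|\}$ and $D^\mu\eta\equiv 0$ off support.

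To convert these estimates into the required bound $a_{k,k}^{[1-|\mu|\varepsilon]_+ + \delta'}$ one case-splits according to which summand dominates $a_{k,k}$ and uses the corresponding lower bound. The critical case is $|\mu|=1$ with a single $W$-derivative, which forces one to bound $\varphi(t)|W|$ by $a_{k,k}^{1-\varepsilon+\delta'}$. Using $a_{k,k}\geq c\varphi(t)|W|^2$ reduces this to bounding the ratio $\varphi(t)^{\varepsilon-\delta'}|W|^{-1+2\varepsilon-2\delta'}$; in the region where $\varphi(t)|W|^2$ actually dominates one has $|W|^2\gtrsim \psi(t)/\varphi(t)$, and combining this with the standing hypothesis $\psi\leq C\varphi^{2/\beta}t^{4/\beta}$ of this subsubsection, the condition $\varepsilon > \frac{1}{4}$ is precisely what makes the resulting exponents of $\varphi$ and $t$ compatible, after which flatness absorbs any residual factor. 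The other cases (dominant $\psi$ or $\eta$; higher order derivatives of either $\varphi(t)q_k(W)$ or $\eta$; and the H\"older seminorm bound $[a_{k,k}]_{\mu,2\delta}\lesssim 1$ for $|\mu|=4$) are more favorable and follow by the same combination of near monotonicity, flatness, and pointwise lower bounds on $a_{k,k}$. The main obstacle is organizing this case analysis cleanly, especially at the boundary $r\approx|t|$ where $\eta$ turns on and the three lower bounds must be combined.
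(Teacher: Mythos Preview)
Your overall strategy---estimate each summand of $a_{k,k}=\varphi(t)q_k(W)+\psi(t)+\eta(W,t)$ separately and compare with the three pointwise lower bounds---is sound, and you correctly identify the critical case as a single $W$-derivative producing the term $\varphi(t)\,|W|$. But the way you close that case is backwards. After reducing to bounding $\varphi(t)^{\varepsilon-\delta'}|W|^{-1+2\varepsilon-2\delta'}$ (whose $|W|$-exponent is \emph{negative} for $\tfrac14<\varepsilon<\tfrac12$), you invoke ``$|W|^2\gtrsim\psi(t)/\varphi(t)$ combined with the standing hypothesis $\psi\leq C\varphi^{2/\beta}t^{4/\beta}$.'' An upper bound on $\psi$ gives no lower bound on $|W|$: the chain $|W|^2\gtrsim\psi/\varphi$ together with $\psi\leq(\cdots)$ yields nothing. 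What is actually needed is a \emph{lower} bound on $\psi$, and the condition $\varepsilon>\tfrac14$ does not enter through $\psi\leq C\varphi^{2/\beta}t^{4/\beta}$ at all.

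The paper does not split by dominant summand. It splits on $t\lessgtr r$. When $t\leq\tfrac34 r$ one has $\varphi(t)\leq\varphi(\tfrac34 r)$, and flatness of $\varphi$ alone gives $\varphi(t)r\lesssim(\varphi(t)r^2)^{1-\varepsilon+\delta'}$. When $t>\tfrac34 r$, the $\eta$ term vanishes and one uses the lower bound $a_{k,k}\geq\psi(t)$ with the specific size $\psi(t)\approx\varphi(t)^2t^4$ (implicit in the construction); the further dichotomy $r\gtrless\varphi(t)^{(\varepsilon-\delta')/(1-2\varepsilon+2\delta')}$ then reduces to the algebraic inequality $\frac{1-\varepsilon}{1-2\varepsilon}>2-2\varepsilon$, which is equivalent to $\tfrac14<\varepsilon<1$. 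So the role of $\varepsilon>\tfrac14$ is to make the $\psi$-term large enough to absorb $\varphi(t)r$ when $r$ is very small, and this hinges on a lower bound for $\psi$, not the upper bound you cite.
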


\begin{proof}
We note that the diagonal entries are%
\begin{eqnarray*}
a_{1,1}\left( x,y,z,t\right) &=&\varphi \left( t\right) \left( x^{2}+\lambda
y^{2}+2z^{2}\right) +\psi \left( t\right) +\varphi \left( r\right) h\left( 
\frac{t}{r}\right) , \\
a_{2,2}\left( x,y,z,t\right) &=&\varphi \left( t\right) \left( y^{2}+\lambda
z^{2}+2x^{2}\right) +\psi \left( t\right) +\varphi \left( r\right) h\left( 
\frac{t}{r}\right) , \\
a_{3,3}\left( x,y,z,t\right) &=&\varphi \left( t\right) \left( z^{2}+\lambda
x^{2}+2y^{2}\right) +\psi \left( t\right) +\varphi \left( r\right) h\left( 
\frac{t}{r}\right) ,
\end{eqnarray*}%
which are each comparable to 
\begin{equation*}
\varphi \left( t\right) \left\vert W\right\vert ^{2}+\psi \left( t\right)
+\varphi \left( r\right) h\left( \frac{t}{r}\right) .
\end{equation*}%
Recall that for any $0<\eta <1$, we have $\left\vert D_{t}^{\mu }\varphi
\left( t\right) \right\vert \leq C_{\eta }\varphi \left( t\right) ^{1-\eta }$%
. Thus for $\left\vert \mu \right\vert =1$ we have%
\begin{eqnarray*}
\left\vert D^{\mu }a_{1,1}\left( x,y,z,t\right) \right\vert &\lesssim
&\varphi \left( t\right) ^{1-\eta }\left\vert W\right\vert ^{2}+\varphi
\left( t\right) \left\vert W\right\vert +\left\vert \psi ^{\prime }\left(
t\right) \right\vert +\left\vert \varphi ^{\prime }\left( r\right)
\right\vert h\left( \frac{t}{r}\right) +\varphi \left( r\right) \frac{1}{r}%
\left( 1+\frac{t}{r}\right) \\
&\lesssim &\left( \varphi \left( t\right) r^{2}+\psi \left( t\right)
+\varphi \left( r\right) h\left( \frac{t}{r}\right) \right) ^{1-\varepsilon
+\delta ^{\prime }}\approx a_{1,1}\left( x,y,z,t\right) ^{\left[
1-\left\vert \mu \right\vert \varepsilon \right] _{+}+\delta ^{\prime }}
\end{eqnarray*}%
provided we choose $\eta $ so that $1-\eta >1-\varepsilon +\delta ^{\prime }$%
, and provided 
\begin{equation}
\varphi \left( t\right) r\lesssim \left( \varphi \left( t\right)
r^{2}+\varphi \left( t\right) ^{2}t^{4}+\varphi \left( r\right) h\left( 
\frac{t}{r}\right) \right) ^{1-\varepsilon +\delta ^{\prime }}.
\label{what we need}
\end{equation}

But this latter inequality holds since\newline
(i) if $t\leq \frac{3}{4}r$, then%
\begin{equation*}
\varphi \left( t\right) r=\varphi \left( t\right) ^{1-\varepsilon +\delta
^{\prime }}\varphi \left( t\right) ^{\varepsilon -\delta ^{\prime
}}r\lesssim \varphi \left( t\right) ^{1-\varepsilon +\delta ^{\prime
}}\varphi \left( \frac{3}{4}r\right) ^{\varepsilon -\delta ^{\prime
}}r\lesssim \left( \varphi \left( t\right) r^{2}\right) ^{1-\varepsilon
+\delta ^{\prime }},
\end{equation*}%
since $\varphi $ is flat at the origin; while\newline
(ii) if $t>\frac{3}{4}r$, then%
\begin{eqnarray*}
\left( \varphi \left( t\right) r^{2}+\varphi \left( t\right)
^{2}t^{4}\right) ^{1-\varepsilon +\delta ^{\prime }} &\approx &\varphi
\left( t\right) ^{1-\varepsilon +\delta ^{\prime }}r^{2-2\varepsilon
+2\delta ^{\prime }}+\varphi \left( t\right) ^{2-2\varepsilon +2\delta
^{\prime }}t^{4-4\varepsilon +4\delta ^{\prime }} \\
&\gtrsim &\left\{ 
\begin{array}{ccc}
\varphi \left( t\right) r & \text{ if } & r\geq \varphi \left( t\right) ^{%
\frac{\varepsilon -\delta ^{\prime }}{1-2\varepsilon +2\delta ^{\prime }}}
\\ 
\varphi \left( t\right) r & \text{ if } & r<\varphi \left( t\right) ^{\frac{%
\varepsilon -\delta ^{\prime }}{1-2\varepsilon +2\delta ^{\prime }}}%
\end{array}%
\right.
\end{eqnarray*}%
since if $r<\varphi \left( t\right) ^{\frac{\varepsilon -\delta ^{\prime }}{%
1-2\varepsilon +2\delta ^{\prime }}}$, then%
\begin{equation*}
\varphi \left( t\right) r<\varphi \left( t\right) ^{1+\frac{\varepsilon
-\delta ^{\prime }}{1-2\varepsilon +2\delta ^{\prime }}}=\varphi \left(
t\right) ^{\frac{1-\varepsilon +\delta ^{\prime }}{1-2\varepsilon +2\delta
^{\prime }}}\lesssim \varphi \left( t\right) ^{2-2\varepsilon +2\delta
^{\prime }}t^{4-4\varepsilon +4\delta ^{\prime }}
\end{equation*}%
since%
\begin{equation*}
\frac{1-\varepsilon +\delta ^{\prime }}{1-2\varepsilon +2\delta ^{\prime }}%
>2-2\varepsilon +2\delta ^{\prime },\ \ \ \text{i.e. }
\end{equation*}%
for some $\delta ^{\prime }>0$, i.e. $\frac{1-\varepsilon }{1-2\varepsilon }%
>2-2\varepsilon $, i.e. 
\begin{equation*}
1-\varepsilon >\left( 1-2\varepsilon \right) \left( 2-2\varepsilon \right)
,\ \ \ \ \ \text{i.e. }\frac{1}{4}<\varepsilon <1.
\end{equation*}%
The case $r\geq \varphi \left( t\right) ^{\frac{\varepsilon -\delta ^{\prime
}}{1-2\varepsilon +2\delta ^{\prime }}}$ is straightforward.

Now we turn to the case $\left\vert \mu \right\vert =2$ where using that
both $\varphi $ and $\psi $ are nearly monontone on $\left( -1,1\right) $,
and from Theorem \ref{main intro}, we have%
\begin{eqnarray*}
\left\vert D^{\mu }a_{1,1}\left( x,y,z,t\right) \right\vert &\lesssim
&\varphi \left( t\right) ^{1-\eta }\left\vert W\right\vert ^{2}+\varphi
\left( t\right) ^{1-\eta }\left\vert W\right\vert +\varphi \left( t\right)
+\left\vert \psi ^{\prime }\left( t\right) \right\vert +\left\vert \psi
^{\prime \prime }\left( t\right) \right\vert \\
&&+\left\vert \varphi ^{\prime \prime }\left( r\right) \right\vert h\left( 
\frac{t}{r}\right) +\varphi ^{\prime }\left( r\right) \left\vert h^{\prime
}\left( \frac{t}{r}\right) \right\vert \frac{1}{r}\left( 1+\frac{t}{r}\right)
\\
&&+\varphi \left( r\right) \left\vert h^{\prime \prime }\left( \frac{t}{r}%
\right) \right\vert \frac{1}{r^{2}}\left( 1+\frac{t}{r}\right) ^{2} \\
&\lesssim &\left( \varphi \left( t\right) r^{2}+\psi \left( t\right)
+\varphi \left( r\right) h\left( \frac{t}{r}\right) \right) ^{1-2\varepsilon
+\delta ^{\prime }}\approx a_{1,1}\left( x,y,z,t\right) ^{\left[
1-\left\vert \mu \right\vert \varepsilon \right] _{+}+\delta ^{\prime }}
\end{eqnarray*}%
provided we choose $\eta $ so that $1-\eta >1-2\varepsilon +\delta ^{\prime
} $, and provided 
\begin{equation}
\varphi \left( t\right) ^{1-\eta }r\lesssim \left( \varphi \left( t\right)
r^{2}+\psi \left( t\right) +\varphi \left( r\right) h\left( \frac{t}{r}%
\right) \right) ^{1-2\varepsilon +\delta ^{\prime }},  \label{what we need'}
\end{equation}%
which of course holds for $\varepsilon >\frac{1}{4}$. Similar calculations
hold for $\left\vert \mu \right\vert =3,4$.
\end{proof}

We have thus demonstrated sharpness of Theorem \ref{final n Grushin} when $%
\varepsilon =\frac{1}{4}$ in the following sense. We do not know if similar
sharpness holds for $\frac{1}{4}<\varepsilon <1$.

\begin{theorem}
\label{flat counter}Let $0<\beta <1$. The diagonally elliptic smooth flat
matrix function $\mathbf{F}_{\varphi ,\psi }$ constructed above satisfies
the diagonal estimates (\ref{diag hyp}) for all $\varepsilon >\frac{1}{4}$
and $\delta >0$, and the off diagonal estimates (\ref{off diag hyp}) for all 
$\varepsilon <\frac{1}{4}$ and $\delta >0$, yet is \emph{not} $SOS_{1,\omega
_{\beta }}$, hence not $SOS_{1,\omega _{1}}$. Of course, if for a diagonally
elliptic smooth flat matrix function $\mathbf{F}$, both (\ref{diag hyp}) and
(\ref{off diag hyp}) hold for $\varepsilon =\frac{1}{4}$ and some $\delta >0$%
, then Theorem \ref{final n Grushin} shows that $\mathbf{F}$ \emph{is} $%
SOS_{2,\omega _{\delta }}$, hence $SOS_{1,\omega _{1}}$.
\end{theorem}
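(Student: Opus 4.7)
The plan is to assemble Theorem \ref{flat counter} directly from the three preceding lemmas proved in this subsection, together with an appeal to Theorem \ref{final n Grushin} for the last sentence. No new construction is required; the main content of the theorem is a packaging statement that makes the sharpness trade-off at $\varepsilon=\tfrac14$ transparent.

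First, I would fix $0<\beta<1$ and specify the choice of $\varphi$ and $\psi$ making $\mathbf{F}_{\varphi,\psi}$ a concrete counterexample. Take $\varphi$ strictly increasing, elliptical, flat and smooth on $(-1,1)$, nearly monotone (e.g.\ $\varphi(t)=e^{-1/|t|}$), and then set $\psi(t)=\varphi(t)^{2/\beta}t^{4/\beta}$, which is also nearly monotone on $(-1,1)$. With these choices the hypothesis of Lemma \ref{failure} is satisfied with equality (up to the constant $C_\beta$), so Lemma \ref{failure} immediately gives that $\mathbf{F}_{\varphi,\psi}$ is not $\mathcal{SOS}_{1,\omega_\beta}^{\nu}$ for any $\nu\in\mathbb{N}$, i.e.\ $\mathbf{F}_{\varphi,\psi}$ is not $SOS_{1,\omega_\beta}$. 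Since the $C^{1,\omega_\beta}$ class contains $C^{1,\omega_1}=C^{1,1}$, the failure also rules out $SOS_{1,\omega_1}$.

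Next, I would invoke the two lemmas immediately preceding the theorem. The off-diagonal lemma shows that the entries $a_{k,j}=(x_k x_j)\varphi(t)$ (for $k\ne j$ among the spatial indices) satisfy the bounds (\ref{off diag hyp}) for every $0<\varepsilon<\tfrac14$, once $\delta''>0$ is taken small enough, because near monotonicity of $\varphi$ forces $|D^\mu a_{k,j}|\lesssim(1+|W|^2)\varphi(t)^{1-\eta}$ for any $\eta>0$ and the diagonal entries are $\approx|W|^2\varphi(t)$. The diagonal lemma similarly shows that the entries $a_{k,k}$, comparable to $\varphi(t)|W|^2+\psi(t)+\varphi(r)h(t/r)$, satisfy (\ref{diag hyp}) for every $\varepsilon>\tfrac14$ and some $\delta'>0$, the critical point being the verification of the inequality (\ref{what we need}), which collapses to the arithmetic condition $\tfrac14<\varepsilon<1$ as derived in the proof of that lemma. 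These two lemmas verify the two halves of the statement about $\varepsilon$.

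Finally, for the concluding sentence, I would simply apply Theorem \ref{final n Grushin} in the borderline case: if a diagonally elliptical smooth flat matrix function $\mathbf{F}$ satisfies both (\ref{diag hyp}) and (\ref{off diag hyp}) with $\varepsilon=\tfrac14$ and some $\delta>0$, then the conclusion of Theorem \ref{final n Grushin} yields a decomposition into $C^{2,\delta}$ squares (plus a quasiconformal term which is itself, being comparable to $a_{p,p}\mathbb{I}$, trivially $SOS_{2,\delta}$ when $p=n+1$), so $\mathbf{F}$ is $SOS_{2,\omega_\delta}$ and a fortiori $SOS_{1,\omega_1}$. The main obstacle is not in the assembly, which is routine, but in making sure the threshold exponents $\varepsilon=\tfrac14$ on each side of the tradeoff genuinely match: the failure in Lemma \ref{failure} and the computation $\tfrac{1-\varepsilon}{1-2\varepsilon}>2-2\varepsilon\Leftrightarrow\varepsilon>\tfrac14$ from the diagonal lemma must be aligned, and this is precisely what the choice $\psi(t)=\varphi(t)^{2/\beta}t^{4/\beta}$ accomplishes.
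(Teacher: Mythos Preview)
Your proposal is correct and follows essentially the same approach as the paper: Theorem \ref{flat counter} is indeed a packaging statement, and the paper assembles it exactly as you do, from Lemma \ref{failure} (the non-SOS part), the off-diagonal lemma (giving (\ref{off diag hyp}) for $\varepsilon<\tfrac14$), the diagonal lemma (giving (\ref{diag hyp}) for $\varepsilon>\tfrac14$), and a pointer to Theorem \ref{final n Grushin} for the final sentence. One small correction: in your last paragraph you write ``when $p=n+1$,'' but Theorem \ref{final n Grushin} requires $1<p\le n$; the correct invocation is $p=n$, so that $\mathbf{Q}_p$ is $1\times 1$ and, inheriting the diagonal estimates via Lemma \ref{off diagonal}, is a finite sum of $C^{2,\delta}$ squares by Theorem \ref{efs eps}.
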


\subsubsection{A Grushin type subordinate matrix function, not a finite sum
of squares plus a subordinate quasiconformal block}

It is not hard to modify the above example to obtain a Grushin type
subordinate matrix function, with diagonal entries that are finite sums of
squares of $C^{2,\delta }$ functions, and that cannot be decomposed as a
finite sum of squares of vector fields plus a quasiconformal block\footnote{%
The hypoellipticity theorem in \cite{KoSa3} doesn't apply to the operator $%
\mathbf{L}=\nabla ^{\func{tr}}\mathbf{A}\left( x\right) \nabla $ with a
matrix function $A\left( x\right) $ of this form.}. Consider first the $%
7\times 7$ matrix function in block form,%
\begin{equation*}
\mathbf{M}\left( x,y,z,t,u,v,w,s\right) \equiv \left[ 
\begin{array}{cc}
\mathbb{I}_{4} & \mathbf{0}_{4\times 3} \\ 
\mathbf{0}_{3\times 4} & \mathbf{F}_{\varphi ,\psi }\left( x,y,z,t\right)%
\end{array}%
\right] ,
\end{equation*}%
where $\mathbb{I}_{4}$ is the $4\times 4$ identity matrix, and $\mathbf{0}%
_{m\times n}$ is the $m\times n$ zero matrix. Then if $f_{\varphi ,\psi
}\left( x,y,z,t\right) \equiv \func{trace}\mathbf{F}_{\varphi ,\psi }\left(
x,y,z,t\right) $, we have using $\mathbf{L}\left( x,y,z\right) \approx
\left\vert \left( x,y,z\right) \right\vert ^{2}\mathbb{I}_{3}$ that%
\begin{equation*}
\mathbf{M}\left( x,y,z,t,u,v,w\right) \approx \left[ 
\begin{array}{cc}
\mathbb{I}_{4} & \mathbf{0}_{4\times 3} \\ 
\mathbf{0}_{3\times 4} & f_{\varphi ,\psi }\left( x,y,z,t\right) \mathbb{I}%
_{3}%
\end{array}%
\right] ,
\end{equation*}%
where $f_{\varphi ,\psi }\left( x,y,z,t\right) $ is $\omega _{s}$-monotone
for all $0<s<s_{0}$ by \cite[Theorem 37]{KoSa1}, and hence is a finite sum
of squares of $C^{2,\delta }$ functions by Theorem \ref{efs eps}, yet $%
\mathbf{M}\left( x,y,z,t,u,v,w\right) $ is not a finite sum of squares of $%
C^{1,1}$ vector functions. This example shows in a striking way that
additional conditions must be assumed on the off-diagonal entries of the
matrix function $\mathbf{M}\left( x,y,z,t,u,v,w\right) $ in order for $%
\mathbf{M}$ to be a finite sum of squares of $C^{2,\delta }$ vector
functions.

However, by Theorem \ref{final n Grushin}, the matrix function $\mathbf{M}$ 
\emph{can} be decomposed as a sum of squares plus the subordinate
quasiconformal block $\mathbf{F}_{\varphi ,\psi }\left( x,y,z,t\right) $,
and we must work just a bit harder to prevent this. We consider instead the
example%
\begin{equation*}
\mathbf{N}\left( x,y,z,t,u,v,w,s\right) \equiv \left[ 
\begin{array}{cc}
\mathbf{M}\left( x,y,z,t,u,v,w\right) & \mathbf{0}_{7\times 1} \\ 
\mathbf{0}_{1\times 7} & \mathbf{G}\left( x,y,z,t\right)%
\end{array}%
\right] \sim \left[ 
\begin{array}{ccc}
\mathbb{I}_{4} & \mathbf{0}_{4\times 3} & \mathbf{0}_{4\times 1} \\ 
\mathbf{0}_{3\times 4} & f_{\varphi ,\psi }\left( x,y,z,t\right) \mathbb{I}%
_{3} & \mathbf{0}_{3\times 1} \\ 
\mathbf{0}_{1\times 4} & \mathbf{0}_{1\times 3} & g_{\varphi ,\psi }\left(
x,y,z,t\right) \mathbb{I}_{3}%
\end{array}%
\right] ,
\end{equation*}%
where 
\begin{eqnarray*}
\mathbf{F}\left( x,y,z,t\right) &=&\varphi \left( t\right) \mathbf{L}\left(
W\right) +\left\{ \left( \varphi \left( t\right) t^{2}\right) ^{4}+\varphi
\left( r\right) h\left( \frac{t}{r}\right) \right\} \mathbb{I}_{3}, \\
\mathbf{G}\left( x,y,z,t\right) &=&\rho \left( t\right) \mathbf{L}\left(
W\right) +\left\{ \left( \rho \left( t\right) t^{2}\right) ^{4}+\rho \left(
r\right) h\left( \frac{t}{r}\right) \right\} \mathbb{I}_{3},
\end{eqnarray*}%
are both examples of a $3\times 3$ matrix function that cannot be written as
a finite sum of squares of $C^{2}$ vector fields, and where $\varphi \left(
t\right) $ and $\rho \left( t\right) $ incomparable.

Note that the hypotheses of Theorem \ref{final n Grushin} fail here since
the final block is $\mathbf{G}\left( x,y,z,t\right) $, and Theorem \ref%
{final n Grushin} then requires $\mathbf{M}\left( x,y,z,t,u,v,w\right) $ to
be a sum of squares of $C^{2,\delta }$ vector functions, which it is not
since $\mathbf{F}$ is embedded in $\mathbf{M}$. The same observation holds
even if we permute rows and columns of $\mathbf{N}$ and declare a final
block of the permuted matrix to be the Grushin block. Indeed, the Grushin
block will be comparable to $\lambda \left( x,y,z,t\right) \mathbb{I}_{p}$
where $\lambda \left( x,y,z,t\right) \in \left\{ 1,f_{\varphi ,\psi }\left(
x,y,z,t\right) ,g_{\varphi ,\psi }\left( x,y,z,t\right) \right\} $, and then
the remaining block will have either $\mathbf{F}$ or $\mathbf{G}$ embedd in
it, hence cannot be a sum of squares.

In another direction, suppose that $f\left( x,y,z,w,t\right) $ and $g\left(
x,y,z,w,t\right) $ are two elliptical flat smooth functions that cannot be
written as a finite sum of squares of $C^{2,\delta }$ functions, such as can
be found in \cite{KoSa1}. Then a diagonally elliptical $7\times 7$ matrix
function $\mathbf{P}\left( x,y,z,w,t,u,v\right) $, whose diagonal elements $%
\left\{ p_{1,1},p_{2,2},p_{3,3},p_{4,4},p_{5,5},p_{6,6},p_{7,7}\right\} $
are comparable to $\left\{ 1,1,1,1,1,f,g\right\} $, cannot be decomposed as
a finite sum of squares of $C^{2,\delta }$ vector fields plus a subordinate
quasiconformal block.

\end{document}